\theoremstyle{plain}
\newtheorem{theo}{Theorem}[section]
\newtheorem{cor}[theo]{Corollary}
\newtheorem{prop}[theo]{Proposition}
\newtheorem{lem}[theo]{Lemma}
\theoremstyle{remark}
\newtheorem{rem}[]{Remark}[section]
\newcommand{\ind}{{\bf 1}}
\begin{document}

\title{Asympotic behavior of the total length of external branches for Beta-coalescents}

\date{\today}

\author{Jean-St\'ephane Dhersin}
\address{University\'e Paris 13, Sorbonne Paris Cit\'e, LAGA, CNRS, UMR 7539, F-93430, Villetaneuse, France.}
\email{dhersin@math.univ-paris13.fr}

\author{Linglong YUAN}
\address{University\'e Paris 13, Sorbonne Paris Cit\'e, LAGA, CNRS, UMR 7539, F-93430, Villetaneuse, France.}
\email{yuan@math.univ-paris13.fr}

\begin{abstract}
{ In this paper, we consider the Beta$(2-\alpha,\alpha)$-coalescents with $1<\alpha<2$ and study the moments of external branches, in particular the total external branch length $L_{ext}^{(n)}$ of an initial sample of $n$ individuals. For this class of coalescents, it has been proved that $\displaystyle n^{\alpha-1}T^{(n)}\stackrel{(d)}{\rightarrow }T,$ where $T^{(n)}$ is the length of an external branch chosen at random,  and $T$ is a known non negative random variable. We get the asymptotic behaviour of several moments of $L_{ext}^{(n)}$. As a consequence, we obtain that for Beta$(2-\alpha,\alpha)$-coalescents with $1<\alpha<2$, $\displaystyle \lim_{n\rightarrow +\infty}n^{3\alpha-5}\mathbb{E}[(L_{ext}^{(n)}-n^{2-\alpha}\mathbb{E}[T])^2]=\frac{\left((\alpha-1)\Gamma(\alpha+1)\right)^2\Gamma(4-\alpha)}{(3-\alpha)\Gamma(4-2\alpha)}$. }
\end{abstract}

\thanks{Jean-St\'ephane Dhersin and Linglong Yuan  benefited from the support of the ``Agence Nationale de la Recherche'': ANR MANEGE (ANR-09-BLAN-0215). }

\keywords{Coalescent process, Beta-coalescent, total external branch length, Fu and Li's statistical test }

\subjclass[2010]{Primary: 60J28. Secondary: 60J25; 92D25; 60J85}
\maketitle

\section{Introduction}
\subsection{Motivation}
In a Wright-Fisher haploid population model with size $N$,  we sample $n$ individuals at present from the total population, and look backward to see the ancestral tree until we get  the most recent common ancestor (MRCA). If time is well rescaled and the size $N$ of population becomes large, then the genealogy of the sample of size $n$  converges weakly to  the Kingman $n$-coalescent (see \cite{MR671034},\cite{kingman2000oc}). During the evolution of the population, mutations may occur. We  consider the infinite sites model introduced by Kimura \cite{kimura1969nhn}. In this model, each mutation is produced at a new site which is never seen before and will never be seen in the future. The neutrality of mutations means that all mutants are equally privileged by the environment. Under the infinite sites model, to detect or reject the neutrality when the genealogy is given by the Kingman coalescent,  Fu and Li\cite{Fu1993} have proposed a  statistical test based on the total mutation numbers on the external branches and internal branches. Mutations on external branches affect only single individuals, so in practice they can be picked out  according to the model setting. In this test,  the ratio $L^{(n)}_{ext}/L^{(n)}$ between  the total external branch length  $L_{ext}^{(n)}$ and the total length $L^{(n)}$  measures in some sense the weight of mutations occurred on external branches among all. It then makes the study of these quantities relevant.

For many populations, Kingman coalescent describes the genealogy quite well. But for some others, when descendants of one individual can occupy a big ratio of the next generation with non-negligible probability, it is no more relevant. It is for example the case of some marine species (see \cite{arnason2004mitochondrial}, \cite{boom1994mitochondrial}, \cite{Eldon2006}, \cite{MR2365877}, \cite{hedgecock19942}). In this case, if time is well rescaled and the size of population becomes large, the ancestral tree converges weakly to the $\Lambda$-coalescent which is associated with a finite measure $\Lambda$ on $[0,1]$. This coalescent allows  multiple collisions. It has first been introduced by Pitman\cite{MR1742892} and Sagitov\cite{MR1742154}.  Among  $\Lambda$-coalescents, a special and important   subclass is called  Beta$(a,b)$-coalescents characterized by $\Lambda$ being a Beta distribution $Beta(a,b)$. The most popular ones are those with parameters $2-\alpha$ and $\alpha$ where $\alpha\in(0,2)$.
 
$Beta$-coalescents arise not only in the context of biology. They also have connections with supercritical Galton-Watson process (see \cite{MR1983046}), with continuous-state branching processes (see \cite{MR2120246}, \cite{berestyckismall}, \cite{2012arXiv1203.2494F}), with continuous random trees (see \cite{MR2349577}). If $\alpha=1$, we recover the Bolthausen-Sznitman coalescent which appears in the field of spin glasses (see \cite{MR1652734}, \cite{MR2309599}) and is also connected to random recursive trees (see \cite{MR2164028}). The Kingman coalescent is also obtained from the $Beta(2-\alpha,\alpha)$-coalescent by letting ${\alpha}$ tend to $2$.

{
For $Beta(2-\alpha,\alpha)$-coalescents with $1<{\alpha}<2$, a central limit theorem of the total external branch length $L_{ext}^{(n)}$ is known (see \cite{kersting2012total}). We should say that in this case, the moment method is not able to obtain the right convergence speed in the central limit theorem, which illustrates some limitations of moment calculations. }

 \subsection{Introduction and main results}
Let $\mathcal{E}$ be the set of partitions of $\mathbb{N}:=\{1,2,3,...\}$ and,  for $n\in {\mathbb N}$ , $\mathcal{E}_{n}$ be the set of partitions of $\mathbb{N}_{n}:=\{1,2,\cdots,n\}$. We denote  by $\rho^{(n)}$ the natural restriction  on $\mathcal{E}_{n}$: 
if $1\leq n\leq m\leq +\infty$ and  $\pi=\{A_{i}\}_{i\in I}$ is a partition of ${\mathbb N}_{m}$, then $\rho^{(n)}\pi$ is the partition of ${\mathbb N}_{n}$ defined by  
$\rho^{(n)}\pi=\{A_{i}\bigcap {\mathbb N}_{n}\}_{i\in I}$.  
For a finite measure ${\Lambda}$ on $[0,1]$, we denote by  $\Pi=(\Pi_{t})_{t\geq0}$ the ${\Lambda}$-coalescent process introduced independently by Pitman\cite{MR1742892} and Sagitov\cite{MR1742154}. The process $(\Pi_{t})_{t\geq0}$ is a c\`ad-l\`ag continuous time  Markovian process taking values in $\mathcal{E}$ with $\Pi_{0}=\{\{1\}, \{2\}, \{3\},...\}$. It is characterized by  the  c\`ad-l\`ag $\Lambda$ $n$-coalescent processes $(\Pi_{t}^{(n)})_{t\geq 0}:=(\mathcal{\rho}^{(n)}\Pi_{t})_{t\geq0}, n\in \mathbb{N}$. For $n\leq m\leq +\infty $, we have $(\Pi_{t}^{(n)})_{t\geq 0}=(\mathcal{\rho}^{(n)}\Pi_{t}^{(m)})_{t\geq 0}$ (where ${\Pi}^{(+\infty )}={\Pi}$). 

Let ${\nu}(dx)=x^{-2}\Lambda(dx)$.  For $2\leq a\leq b$, we set 
$$\lambda_{b,a}=\int_{0}^{1}x^{a-2}(1-x)^{b-a}\Lambda(dx)=\int_{0}^{1}x^{a}(1-x)^{b-a}\nu(dx).$$

$\Pi^{(n)}$ is  a Markovian process with values in $\mathcal{E}_{n}$, and its transition rates are given by: for ${\xi}, {\eta}\in \mathcal{E}_{n}$,  $q_{{\xi},{\eta}}={\lambda}_{b,a} $ if ${\eta}$ is obtained by merging $a$ of the  $b=|{\xi}|$  blocks of ${\xi}$ and letting the $b-a$ others unchanged, and $q_{{\xi},{\eta}}=0$ otherwise. We say that $a$ individuals (or blocks) of ${\xi}$ have been coalesced in one single individual of ${\eta}$. Remark that the process $\Pi^{(n)}$ is an exchangeable process, which means that, for any permutation $\tau$ of ${\mathbb N}_{n}$,  $\tau\circ\Pi^{(n)}\stackrel{(d)}{=}\Pi^{(n)}$.  

The process $\Pi^{(n)}$  finally reaches one  block. This final individual  is called the most recent common ancestor (MRCA). We denote by $\tau^{(n)}$ the number of collisions it takes for the $n$ individuals to be coalesced to the MRCA.

We define  by $R^{(n)}=(R^{(n)}_{t})_{t\geq 0}$  the block counting process  of  $(\Pi_{t}^{(n)})_{t\geq 0}$: $R^{(n)}_t=|\Pi_{t}^{(n)}|$, which equals the number of blocks/individuals at time $t$. Then $R^{(n)}$ is a continuous time Markovian process taking values in $\mathbb{N}_n$, decreasing from $n$ to $1$. At state $b$, for $a=2,...,b$, each of the ${b\choose a}$ groups  with $a$ individuals coalesces independently at rate $\lambda_{b,a}$. Hence, the time the process $(R^{(n)}_{t})_{t\geq 0}$ stays at state $b$ is exponential  with parameter:
\begin{align}\label{gn}
g_{b}=\sum_{a=2}^{b}{b \choose a }\lambda_{b,a}=\int_{0}^{1}(1-(1-x)^{b}-bx(1-x)^{b-1}){\nu}(dx)=b(b-1)\int_0^1t(1-t)^{b-2}\rho(t)dt,
\end{align}
where $\rho(t)=\int_t^1\nu(dx).$
We denote by $Y^{(n)}=(Y_{k}^{(n)})_{k\geq 0}$ the discrete time Markov chain associated with $R^{(n)}$.  This is a decreasing process from $Y_{0}^{(n)}=n$ which reaches 1 at the ${\tau}^{(n)}$-th jump. The probability transitions of the Markov chain $Y^{(n)}$ are given by: for $b\geq 2$, $k\geq 1$ and $1\leq l\leq b-1$,

\begin{equation}\label{pnk}
p_{b,b-l}:=\mathbb{P}(Y_{k}^{(n)}=b-l|Y_{k-1}^{(n)}=b)=\frac{{b\choose l+1}\lambda_{b,l+1}}{g_{b}},
\end{equation}

and 1 is an absorbing state.

We introduce the discrete time process $X^{(n)}_{k}:=Y_{k-1}^{(n)}-Y_{k}^{(n)}$, $k\geq 1$ with $X^{(n)}_{0}=0$. This process counts the number of blocks  we lose at the $k$-th jump.
For $i\in\left\{ 1,\ldots,n\right\}$, we define
$$T^{(n)}_i:=\inf\left\{t|\left\{ i\right\}\notin \Pi^{(n)}_t\right\}$$
as the length of the $i$-th external branch and $T^{(n)}$ the length of a randomly chosen external branch. By exchangeability, $T_i^{(n)}\stackrel{(d)}{=}T^{(n)}$. We denote by $L_{ext}^{(n)}:=\sum_{i=1}^{n}T_i^{(n)}$ the total external branch length  of $\Pi^{(n)}$, and by $L^{(n)}$ the total branch length. 

For several measures ${\Lambda}$, many asymptotic results on the external branches and their total external lengths of the ${\Lambda}$ $n$-coalescent are already known. 
\begin{enumerate}
\item If $\Lambda=\delta_{0}$, Dirac measure on $0$, $\Pi^{(n)}$ is the Kingman $n$-coalescent. Then, 
\begin{enumerate}
\item $nT^{(n)}$ converges in distribution to $T$ which is a random variable with density $f_{T}(x)=\frac{8}{(2+x)^{3}}\ind_{x\geq 0}$ (See \cite{MR2156553}, \cite{caliebe2007length}, \cite{janson2011total}). 
\item $L_{ext}^{(n)}$ converges in $L^2$ to $2$ (see \cite{Fu1993}, \cite{MR2439767}). A central limit theorem is also proved in \cite{janson2011total}.
\end{enumerate} 

\item If $\Lambda$ is the uniform probability measure on $[0,1]$, $\Pi^{(n)}$ is the  Bolthausen-Sznitman $n$-coalescent. Then $(\log n)T^{(n)}$ converges in distribution to an exponential variable with parameter 1 (see \cite{MR2554368}, \cite{yuan2013measure}). For moment results of $L^{(n)}_{ext}$, we refer to \cite{EJP2286} and for central limit theorem, we refer to \cite{kersting2013total}.
\item If $\nu_{-1}=\int_0^1x^{-1}\Lambda(dx)<+\infty$, which includes the case of the $Beta(2-\alpha,\alpha)$-coalescent with $0<\alpha<1$, then
 \begin{enumerate}
\item $T^{(n)}$ converges in distribution to an exponential variable with parameter $\nu_{-1}$ (see \cite{MR2484170,MR2684740}).
\item  $L^{(n)}/n$ converges in distribution to a random variable $L$ whose distribution coincides with that of $\int_0^{+\infty}e^{-X_t}dt$, where $X_t$ is a certain subordinator (see page $1405$ in \cite{MR2353033} and \cite{MR2256876} ), and $L_{ext}^{(n)}/L^{(n)}$ converges in probability to $1$ (see \cite{MR2684740}).
\end{enumerate}
\item If $\rho(t)=C_0t^{-\alpha}+O(t^{-\alpha+\zeta}), C_0>0, \zeta>1-1/\alpha, 1<\alpha<2$, when $t\rightarrow 0$, which includes the $Beta(2-\alpha,\alpha)$-coalescents  with $1<\alpha<2$ (with $C_0=\frac{1}{\Gamma(\alpha+1)\Gamma(2-\alpha)}$), $n^{\alpha-1}T^{(n)}$ converges in distribution to $T$ which is a random variable with density function (see\cite{dhersin2012length})
 \begin{equation}\label{density}f_{T}(x)=\frac{\alpha C_0\Gamma(2-\alpha)}{(\alpha-1)}(1+C_0\Gamma(2-\alpha)x)^{-\frac{\alpha}{\alpha-1}-1}\ind_{x\geq 0}.
\end{equation}
In the case of Beta$(2-\alpha, \alpha)$-coaelscents with $1<\alpha<2$, we refer to \cite{kersting2012total, kersting2012asymptotic} for central limit theorems of $L_{ext}^{(n)}$ and $L^{(n)}$.

\end{enumerate}

{In this paper, we consider the processes which satisfy the following assumption:
\begin{equation}\label{mainassum}
\rho(t)=C_0t^{-\alpha}+C_1t^{-\alpha+\zeta}+o(t^{-\alpha+\zeta}), C_0>0, C_1\in \mathbb{R}, \zeta>0, t\rightarrow 0.
\end{equation}
The aim is to study the moments of the total external branch length $L_{ext}^{(n)}$ of these processes.  We assume from now on that $1<\alpha<2$ and $T$ is a random variable with density (\ref{density}). Here is our main result. }

For $s\in \mathbb{R},$ we define 

\begin{equation}\label{notation}\nu^{(s)}(dx)=(1-x)^{s}\nu(dx), \quad \text{and} \quad \omega^{(s)}=\sup\{u\geq 0 ; \forall 0<y<1, \int_y^{1}(1-x)^{-u}\nu^{(s)}(dx)<+\infty\}.\end{equation}
We define in particular $\omega=\omega^{(0)}$ and notice that $ \nu^{(0)}=\nu$. The quantity $\omega^{(s)}$ gives the information on the singularity of the measure $\nu^{(s)}$ near $1$.  
\begin{theo}\label{cor1}
We assume that $\rho(t)$ satisfies (\ref{mainassum}) and that $\alpha-1<\zeta$, and $2(\alpha-1)<\omega^{(1)}.$ 
\begin{enumerate}
\item The total external branch length  $L_{ext}^{(n)}$ satisfies 
$$\lim_{n\rightarrow +\infty}n^{3\alpha-5}\mathbb{E}[(L_{ext}^{(n)}-n^{2-\alpha}\mathbb{E}[T])^2]=\frac{\Delta(\alpha)}{2},$$
where
$$\mathbb{E}[T]=\frac{\alpha-1}{C_0\Gamma(2-\alpha)}, \quad \text{and} \quad \Delta(\alpha)=\frac{\int_0^1((1-x)^{2-\alpha}-1)^2\nu(dx)}{3-\alpha}\left(\frac{\alpha-1}{C_0\Gamma(2-\alpha)}\right)^3.$$

In particular, for the Beta$(2-\alpha,\alpha)$-coalescent

$$\lim_{n\rightarrow +\infty}n^{3\alpha-5}\mathbb{E}[(L_{ext}^{(n)}-n^{2-\alpha}\mathbb{E}[T])^2]=\frac{\left((\alpha-1)\Gamma(\alpha+1)\right)^2\Gamma(4-\alpha)}{2(3-\alpha)\Gamma(4-2\alpha)},$$

with $\mathbb{E}[T]=\alpha(\alpha-1)\Gamma(\alpha).$

\item As a consequence,
$\displaystyle n^{\alpha-2}L_{ext}^{(n)}\stackrel{(L^2)}{\rightarrow }\mathbb{E}[T].$
\end{enumerate}
\end{theo}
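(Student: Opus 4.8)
The plan is to reduce the $L^2$-asymptotics of $L_{ext}^{(n)}$ to a careful second-moment computation, centered on the joint behaviour of pairs of external branches. First I would write
\begin{equation*}
\mathbb{E}[(L_{ext}^{(n)})^2]=\sum_{i,j=1}^n\mathbb{E}[T_i^{(n)}T_j^{(n)}]=n\,\mathbb{E}[(T^{(n)})^2]+n(n-1)\,\mathbb{E}[T_1^{(n)}T_2^{(n)}],
\end{equation*}
using exchangeability, and similarly $\mathbb{E}[L_{ext}^{(n)}]=n\,\mathbb{E}[T^{(n)}]$. The known convergence $n^{\alpha-1}T^{(n)}\xrightarrow{(d)}T$ from \reff{density}, together with a uniform integrability statement for $n^{\alpha-1}T^{(n)}$ and for its square (which is where the hypotheses $\alpha-1<\zeta$ and $2(\alpha-1)<\omega^{(1)}$ should enter, controlling the tails via the measure $\nu^{(1)}$), will give $\mathbb{E}[T^{(n)}]\sim n^{1-\alpha}\mathbb{E}[T]$ and $\mathbb{E}[(T^{(n)})^2]=O(n^{2-2\alpha})$, so the diagonal term contributes $O(n^{3-2\alpha})=o(n^{5-3\alpha})$ (valid for $\alpha<2$) and is negligible after multiplying by $n^{3\alpha-5}$. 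Expanding the centered square,
\begin{equation*}
\mathbb{E}[(L_{ext}^{(n)}-n^{2-\alpha}\mathbb{E}[T])^2]=n(n-1)\,\mathbb{E}[T_1^{(n)}T_2^{(n)}]-2n^{3-\alpha}\mathbb{E}[T]\,\mathbb{E}[T^{(n)}]+n^{4-2\alpha}(\mathbb{E}[T])^2+O(n^{3-2\alpha}),
\end{equation*}
so everything comes down to a sufficiently precise expansion of $n^2\mathbb{E}[T_1^{(n)}T_2^{(n)}]$: we need the leading term $n^{4-2\alpha}(\mathbb{E}[T])^2$ to cancel, and the next-order term, of size $n^{5-3\alpha}$, to produce $\tfrac12\Delta(\alpha)$.

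The main work, then, is an asymptotic analysis of $\mathbb{E}[T_1^{(n)}T_2^{(n)}]$. I would track the two distinguished individuals $1$ and $2$ through the block-counting chain $Y^{(n)}$ and its jump process $X^{(n)}$: at each collision starting from $b$ blocks, the probability that individual $1$ is absorbed (i.e. its block participates in the merger) is $O((b-1)^{-1}\cdot(\text{typical block number lost})/b)$, and one needs the joint law of the events ``$1$ still external'' and ``$2$ still external'' up to a given number of blocks, plus the holding times. Writing $T_1^{(n)}=\sum_{k}(R^{(n)}\text{-holding time at step }k)\ind_{\{1\text{ external at step }k\}}$ and likewise for $T_2^{(n)}$, one gets a double sum over pairs of steps; the holding time at state $b$ is exponential with mean $1/g_b$, and by \reff{gn} and assumption \reff{mainassum}, $g_b\sim C_0\Gamma(2-\alpha)\,\alpha^{-1}(\alpha-1)^{-1}\,b^{\alpha}$ times a constant, so $1/g_b\asymp b^{-\alpha}$. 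The correction term in \reff{mainassum} (coefficient $C_1$, exponent $\zeta$) must be shown not to affect the order-$n^{5-3\alpha}$ term, which is exactly what $\alpha-1<\zeta$ buys. I expect the cleanest route is to use the already-established first-order results for a single external branch together with a decomposition of $\mathbb{E}[T_1^{(n)}T_2^{(n)}]-\mathbb{E}[T_1^{(n)}]\mathbb{E}[T_2^{(n)}]$, i.e. the covariance, whose leading asymptotics of order $n^{3-3\alpha}$ (so that $n^2\times$ it is order $n^{5-3\alpha}$) will carry the factor $\int_0^1((1-x)^{2-\alpha}-1)^2\nu(dx)$: the term $(1-x)^{2-\alpha}-1$ arises as the effect on the pair-survival probability of one individual's block being spared while a fraction $x$ of the population coalesces, and the square reflects that both branches feel correlated fluctuations of the same collisions.

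Concretely I would proceed in steps: (i) prove uniform integrability of $\{n^{\alpha-1}T^{(n)}\}$ and of its square, upgrading the distributional convergence to convergence of the first two moments, and deduce the negligibility of the diagonal term and the precise value $\mathbb{E}[T]=(\alpha-1)/(C_0\Gamma(2-\alpha))$ (consistent with integrating \reff{density}); (ii) set up the representation of $T_1^{(n)}$ and $T_2^{(n)}$ via the chain $Y^{(n)}$ with the two marked blocks and derive, using \reff{pnk} and \reff{gn}, a recursion or integral formula for $\mathbb{E}[T_1^{(n)}T_2^{(n)}]$; (iii) extract the asymptotics of this quantity to order $n^{5-3\alpha}$, showing the cancellation of the $n^{4-2\alpha}$ term against $-2n^{3-\alpha}\mathbb{E}[T]\mathbb{E}[T^{(n)}]+n^{4-2\alpha}(\mathbb{E}[T])^2$ and identifying the coefficient of $n^{5-3\alpha}$ as $\tfrac12\Delta(\alpha)$; (iv) specialize to $\Lambda=Beta(2-\alpha,\alpha)$, where $C_0=1/(\Gamma(\alpha+1)\Gamma(2-\alpha))$ and $\int_0^1((1-x)^{2-\alpha}-1)^2\nu(dx)$ can be evaluated in closed form using Beta integrals, yielding the stated constant; (v) conclude part (2) immediately, since $n^{2-\alpha}L_{ext}^{(n)}-\mathbb{E}[T]=n^{2-\alpha}(L_{ext}^{(n)}-n^{2-\alpha}\mathbb{E}[T])\cdot n^{\alpha-2}$ has $L^2$-norm squared equal to $n^{2(2-\alpha)}\mathbb{E}[(L_{ext}^{(n)}-n^{2-\alpha}\mathbb{E}[T])^2]=n^{2(2-\alpha)}\cdot O(n^{5-3\alpha})=O(n^{-(\alpha-1)})\to 0$. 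The hard part is unquestionably step (iii): obtaining the joint two-branch asymptotics with enough precision to see the second-order term, in particular controlling the error from the $C_1 t^{-\alpha+\zeta}$ correction and from the discrepancy between the discrete chain $Y^{(n)}$ and its diffusive/self-similar scaling limit; the conditions $\alpha-1<\zeta$ and $2(\alpha-1)<\omega^{(1)}$ are precisely the thresholds that make these error terms $o(n^{5-3\alpha})$.
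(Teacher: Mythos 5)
Your overall strategy coincides with the paper's: your expansion of $\mathbb{E}[(L_{ext}^{(n)}-n^{2-\alpha}\mathbb{E}[T])^2]$ is just the decomposition (\ref{decom2}) into a diagonal (variance) term, an off-diagonal (covariance) term, and a bias term; the diagonal term is handled by boundedness of the second moment of $n^{\alpha-1}T_1^{(n)}$ (the paper's Theorem \ref{secondtheo}); and $\mathbb{E}[T_1^{(n)}T_2^{(n)}]$ is attacked through a first-collision recursion, which is exactly (\ref{2trecu}). So there is no divergence of method, and your exponent bookkeeping for why the off-diagonal term dominates ($5-3\alpha>6-4\alpha>3-2\alpha$) is correct.

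The gap is that your step (iii), which you rightly call the hard part, \emph{is} the proof, and it is not supplied. Concretely: (a) the constant is not determined by the leading order of $\mathbb{E}[T_1^{(n)}T_2^{(n)}]$ alone — in your centered expansion the cross term $-2n^{3-\alpha}\mathbb{E}[T]\mathbb{E}[T^{(n)}]$ contributes at order $n^{5-3\alpha}$ through the \emph{second-order} term of $\mathbb{E}[T^{(n)}]$, so you need the full two-term expansions $\mathbb{E}[T_1^{(n)}]=\mathbb{E}[T]n^{1-\alpha}+an^{2(1-\alpha)}+o(n^{2(1-\alpha)})$ and $\mathbb{E}[T_1^{(n)}T_2^{(n)}]=(\mathbb{E}[T])^2n^{2(1-\alpha)}+bn^{3(1-\alpha)}+o(n^{3(1-\alpha)})$, with $\Delta(\alpha)$ emerging only from the combination $b-2a\mathbb{E}[T]$ after the $a$-dependent parts cancel; these are Theorems \ref{1t} and \ref{2t}, obtained by renormalizing the recursions into $a_n=b_n+\sum_kq_{n,k}a_k$ with $b_n=o(n^{-1})$ and applying Lemma \ref{toollem}, which in turn rests on the expansions of $\sum_kp_{n,k}\frac{(k-1)_l}{(n)_l}(n/k)^r$ in Proposition \ref{h1} — this is precisely where $\zeta>\alpha-1$ and $2(\alpha-1)<\omega^{(1)}$ enter, and none of it is routine. (b) The uniform integrability of $(n^{\alpha-1}T^{(n)})^2$ is itself proved by a bootstrap on the same recursion (Section 4), not by a soft tail estimate, so asserting it is another unproved input. (c) You state without computation that the $n^{5-3\alpha}$ coefficient equals $\tfrac12\Delta(\alpha)$; since your off-diagonal sum counts $n(n-1)$ ordered pairs while the paper's (\ref{decom2}) carries $\binom{n}{2}$, the combinatorial factor multiplying the covariance coefficient from Corollary \ref{thm4} must be pinned down by the actual expansions rather than asserted. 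A minor slip in step (v): the prefactor should be $n^{2(\alpha-2)}$, not $n^{2(2-\alpha)}$, giving $n^{2(\alpha-2)}O(n^{5-3\alpha})=O(n^{1-\alpha})\to 0$.
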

\begin{rem}
\begin{itemize}
\item  For the second part of the theorem, the convergence in probability and almost surely can be deduced from  \cite{MR2349577}, \cite{berestycki2009recent}, \cite{berestycki2011asymptotic} by Berestycki et al for a slightly different class of $\Lambda$ coalescents.

\item The first part of the theorem gives $n^{(5-3\alpha)/2}$ as the  convergence speed for $L_{ext}^{(n)}$ tending to $n^{2-\alpha}\mathbb{E}[T]$ in the sense of second moment. But as shown in \cite{kersting2012total} for Beta$(2-\alpha, \alpha)$-coalescents,
$$\frac{L_{ext}^{(n)}-n^{2-\alpha}\mathbb{E}[T]}{n^{1/\alpha+1-\alpha}}\stackrel{(d)}{\rightarrow}\frac{\alpha(2-\alpha)(\alpha-1)^{1/\alpha+1}\Gamma(\alpha)}{\Gamma(2-\alpha)^{1/\alpha}}\zeta,$$
where $\zeta$ is a stable random variable with parameter $\alpha.$ Our moment method fails to get the right speed of convergence in distribution. 

To prove this result, the first idea is to write 
\begin{equation}\label{decom2}\mathbb{E}[(L_{ext}^{(n)}-n^{2-\alpha}\mathbb{E}[T])^2]=nVar(T_1^{(n)})+n(n-1)\mbox{Cov}(T_1^{(n)},T_2^{(n)})+(n\mathbb{E}[T_1^{(n)}]-n^{2-\alpha}\mathbb{E}[T])^2.\end{equation} 
Hence we have to get results on the moments of the external branches. This is given by the next theorems. The first one gives the asymptotic behaviour for the covariance of two  external branch lengths.
\end{itemize}
\end{rem}
\begin{theo}\label{thm1}
We assume that $\rho(t)$ satisfies (\ref{mainassum}) and that $\alpha-1<\zeta$, and $2(\alpha-1)<\omega^{(1)}.$ Then the asymptotic covariance of two external branch lengths is given by:
$$\lim_{n\rightarrow +\infty}n^{3(\alpha-1)}\mbox{Cov}(T_1^{(n)},T_2^{(n)})=\Delta(\alpha).$$
\end{theo}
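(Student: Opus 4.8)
The plan is to compute $\mbox{Cov}(T_1^{(n)},T_2^{(n)})=\mathbb{E}[T_1^{(n)}T_2^{(n)}]-\mathbb{E}[T_1^{(n)}]\mathbb{E}[T_2^{(n)}]$ by tracking the joint genealogy of individuals $1$ and $2$ through the block-counting chain $Y^{(n)}$. The natural decomposition is to condition on the number of jumps $\sigma^{(n)}$ after which individuals $1$ and $2$ are no longer both singletons, i.e. the first jump at which at least one of the two blocks $\{1\}$, $\{2\}$ is absorbed into a larger block. Up to that jump both external branches accumulate the \emph{same} waiting times (the holding times of $R^{(n)}$ at the successive states visited by $Y^{(n)}$), since at state $b$ the chain waits an exponential time with mean $1/g_b$. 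Writing $T_i^{(n)}=S+U_i$ where $S=\sum_{k=1}^{\sigma^{(n)}-1}e_k$ is the common part built from the holding times $e_k\sim\mathcal{E}(g_{Y_{k-1}^{(n)}})$ and $U_1,U_2$ are the (possibly zero) residual parts after the split, one expands
\begin{equation*}
\mathbb{E}[T_1^{(n)}T_2^{(n)}]=\mathbb{E}[S^2]+\mathbb{E}[S(U_1+U_2)]+\mathbb{E}[U_1U_2].
\end{equation*}
The term $\mathbb{E}[U_1U_2]$ should be negligible at the relevant scale because conditionally on the split event the residual branch parts are either zero (if the individual is absorbed) or, when nonzero, already small; a careful accounting using exchangeability shows it contributes at lower order than $n^{-3(\alpha-1)}$. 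The covariance then reduces, to leading order, to $\mathbb{E}[S^2]-\mathbb{E}[S]^2$ plus cross terms controlled via the known marginal asymptotics $n^{\alpha-1}T^{(n)}\stackrel{(d)}{\to}T$.

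The key computational input is the asymptotics of the probability $p_b$ that, at a jump from state $b$, a \emph{fixed} block (say $\{1\}$) is \emph{not} one of the merged blocks; by exchangeability and \reff{pnk} one has $p_b=\sum_{l=1}^{b-1}p_{b,b-l}\frac{b-l-1}{b-1}$, and under assumption \reff{mainassum} one needs the expansion $1-p_b=\frac{(\alpha-1)}{b}(1+o(1))$ (this is the same computation underlying the marginal limit $f_T$ in \reff{density}, cf. \cite{dhersin2012length}). Similarly one needs the asymptotics of $g_b$, namely $g_b\sim C_0\Gamma(2-\alpha)\,b^{\alpha}/(\alpha-1)$ up to the stated correction terms, coming from \reff{gn} and \reff{mainassum}. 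With these in hand, $S$ is (conditionally) a sum of exponentials along a path of $Y^{(n)}$ that, after rescaling by $n^{\alpha-1}$, converges; the second moment $\mathbb{E}[S^2]$ involves a double sum $\sum_{j\le k}\mathbb{E}[e_je_k \ind_{\{j,k<\sigma^{(n)}\}}]$ which, using independence of the $e_k$ given the chain, becomes $\sum_k \mathbb{E}[e_k^2\ind_{k<\sigma^{(n)}}]+2\sum_{j<k}\mathbb{E}[e_j\ind_{j<\sigma^{(n)}}]\mathbb{E}[e_k\ind_{k<\sigma^{(n)}}\mid\text{chain}]$. The diagonal part gives the $\Var$ contribution and the off-diagonal part, combined with $\mathbb{E}[S]^2$, produces the covariance. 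One then evaluates the limiting sum/integral: the factor $\big((1-x)^{2-\alpha}-1\big)^2$ in $\Delta(\alpha)$ arises because the relative size of the loss in common branch length, when $a$ blocks out of $b$ merge with $b\approx$ current count and the merge is governed by $\nu(dx)$, is asymptotically $(1-x)^{2-\alpha}-1$ per "$x$-merger", and squaring it comes from the second-moment (double-sum) structure; the $3-\alpha$ in the denominator and the cube of $\mathbb{E}[T]=\frac{\alpha-1}{C_0\Gamma(2-\alpha)}$ come from integrating $t^{2-3\alpha}$-type densities over the rescaled path. The role of the hypotheses $\alpha-1<\zeta$ and $2(\alpha-1)<\omega^{(1)}$ is to guarantee, respectively, that the correction term $C_1t^{-\alpha+\zeta}$ in \reff{mainassum} does not disturb the leading order and that the relevant second moments ($\int(1-x)^{-2(\alpha-1)}\nu^{(1)}(dx)$-type integrals) are finite, so that the limiting constant is well defined.

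The main obstacle I expect is making the decomposition $T_i^{(n)}=S+U_i$ and the negligibility of $\mathbb{E}[U_1U_2]$ and of the cross terms rigorous at the precise scale $n^{3(\alpha-1)}$: one must show not just that $U_1,U_2$ are small in probability but that their products, and the products $S\cdot U_i$, contribute strictly below order $n^{-3(\alpha-1)}$, which requires uniform control of the tails of the rescaled chain (e.g. bounds of the form $\mathbb{E}[(n^{\alpha-1}T^{(n)})^p]=O(1)$ for $p$ slightly above $2$, to beat the scaling) and careful bookkeeping of how often, and by how much, the two tracked lineages' branch lengths can diverge before the split. A secondary technical point is uniform-in-$n$ control of the approximations $1-p_b\sim(\alpha-1)/b$ and $g_b\sim c\,b^\alpha$ for $b$ ranging from $2$ up to $n$, including the error terms from \reff{mainassum}, so that dominated-convergence passes to the limit in the double sum; this is where the bulk of the routine-but-delicate estimation lies, and it is essentially the same machinery as in the proof of the marginal convergence to \reff{density} and in Kersting's analysis \cite{kersting2011asymptotic} of $L^{(n)}$, adapted to the two-lineage setting.
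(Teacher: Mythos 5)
Your decomposition $T_i^{(n)}=S+U_i$ at the first jump where one of $\{1\},\{2\}$ stops being a singleton is a legitimate starting point, but the proposal miscounts orders of magnitude at the crucial step, and this is a genuine gap. With your definition of the split, $U_1U_2=0$ almost surely (at least one of the two blocks is absorbed at that jump, and $S=\min(T_1^{(n)},T_2^{(n)})$), so
$$\mbox{Cov}(T_1^{(n)},T_2^{(n)})=\Var(S)+\mbox{Cov}(S,U_1+U_2)-\mathbb{E}[U_1]\mathbb{E}[U_2].$$
Each of the three terms on the right is of order $n^{2(1-\alpha)}$: indeed $n^{\alpha-1}S$ converges to the minimum of two asymptotically independent copies of $T$, so $\Var(S)\asymp n^{2(1-\alpha)}$, and $\mathbb{E}[U_i]\asymp n^{1-\alpha}$ since with probability bounded away from $0$ the block $\{i\}$ survives the split and then lives for a further time of order $n^{1-\alpha}$. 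Since the covariance is of order $n^{3(1-\alpha)}=o(n^{2(1-\alpha)})$, the leading terms of these three quantities must cancel exactly, and the constant $\Delta(\alpha)$ lives in the \emph{next} order of each of them. Your sketch treats the answer as the leading order of $\mathbb{E}[S^2]-\mathbb{E}[S]^2$ and dismisses the remaining terms as contributing "strictly below order $n^{-3(\alpha-1)}$"; in fact none of them is lower order, and extracting $\Delta(\alpha)$ would require simultaneous second-order expansions (with errors $o(n^{3(1-\alpha)})$) of $\Var(S)$, $\mbox{Cov}(S,U_i)$ and $\mathbb{E}[U_i]$, which the proposal does not supply. The moment bounds $\mathbb{E}[(n^{\alpha-1}T^{(n)})^p]=O(1)$ you invoke control tails but cannot produce this cancellation.

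For comparison, the paper avoids the path decomposition entirely: it conditions only on the first collision to get the one-step recursion $\mathbb{E}[T_1^{(n)}T_2^{(n)}]=\frac{2\mathbb{E}[T_1^{(n)}]}{g_n}+\sum_{k}p_{n,k}\frac{(k-1)_2}{(n)_2}\mathbb{E}[T_1^{(k)}T_2^{(k)}]$, solves it to \emph{two} orders via Lemma \ref{toollem} and Proposition \ref{h1} (Theorem \ref{2t}), does the same for $\mathbb{E}[T_1^{(n)}]$ (Theorem \ref{1t}), and obtains $\Delta(\alpha)$ as the coefficient of $n^{3(1-\alpha)}$ surviving the cancellation of the $n^{2(1-\alpha)}$ terms in $\mathbb{E}[T_1^{(n)}T_2^{(n)}]-\mathbb{E}[T_1^{(n)}]^2$; the closed form $\int_0^1((1-x)^{2-\alpha}-1)^2\nu(dx)$ appears only after an explicit algebraic recombination of the constants $A$, $B$, $C_2^{(1)}$, $C_2^{(2)}$ in Corollary \ref{thm4}. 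Your heuristic for the factor $((1-x)^{2-\alpha}-1)^2$ is suggestive but is not a substitute for that computation. If you want to salvage your route, you would need the analogous two-term expansions along the path decomposition, which is at least as much work as the recursive approach.
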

\begin{rem} We will prove a more general result: This theorem is the case $(3)$ of Corollary \ref{thm4}.
Notice that $\Delta(\alpha)$ is strictly positive implies that $\mbox{Cov}(T_1^{(n)},T_2^{(n)})$ is of order $n^{3-3\alpha}$ and $T_1^{(n)}$, $T_2^{(n)}$ are positively correlated in the limit which is similar to Boltausen-Sznitman coalescent and opposite of  Kingman coalescent (negatively correlated) (see \cite{EJP2286}). To prove this theorem, we have to give the asymptotic behaviours of $\mathbb{E}[T_1^{(n)}T_2^{(n)}]$(Theorem \ref{2t}) and $\mathbb{E}[T_1^{(n)}]$ (Theorem \ref{1t}). We also get from Theorem \ref{1t} that  the third term in (\ref{decom2}) satisfies
\begin{equation}\label{thirdterm}(n\mathbb{E}[T_1^{(n)}]-n^{2-\alpha}\mathbb{E}[T])^2=O(n^{6-4\alpha}).\end{equation}

The second one gives the asymptotic behaviour of moments of one external branch length, hence we can estimate $nVar(T_1^{(n)})$.
\end{rem}
\begin{theo}\label{secondtheo}We assume that $\rho(t)$ satisfies (\ref{mainassum}) and that $ 2(\alpha-1)< \omega^{(1)}$, and $ \zeta>1-1/\alpha$.  
\begin{enumerate}
\item If $0\leq \beta<\frac{\alpha}{\alpha-1}, \beta(\alpha-1)<\omega^{(1)}$,  then $\displaystyle \lim_{n\to +\infty }\mathbb{E}[(n^{\alpha-1}T_1^{(n)})^{\beta}]=\mathbb{E}[T^{\beta}].$
\item If $\beta\geq \frac{\alpha}{\alpha-1}$, then $\displaystyle \lim_{n\to +\infty }\mathbb{E}[(n^{\alpha-1}T_1^{(n)})^{\beta}]=+\infty.$
\end{enumerate}
\end{theo}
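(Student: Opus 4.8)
The plan is to compute the moments $\mathbb{E}[(T_1^{(n)})^{\beta}]$ by conditioning on the structure of the coalescent as seen from individual $1$, and to compare them with $\mathbb{E}[T^{\beta}]$ via the known convergence in distribution $n^{\alpha-1}T_1^{(n)}\stackrel{(d)}{\to}T$ recalled in item (4) of the list preceding the statement. Since $n^{\alpha-1}T_1^{(n)}\Rightarrow T$, the portmanteau/Fatou argument immediately gives the lower bound $\liminf_n \mathbb{E}[(n^{\alpha-1}T_1^{(n)})^{\beta}]\geq \mathbb{E}[T^{\beta}]$ for every $\beta\geq 0$; the content of the theorem is therefore (i) the matching upper bound and uniform integrability when $0\leq\beta<\frac{\alpha}{\alpha-1}$, and (ii) the blow-up when $\beta\geq\frac{\alpha}{\alpha-1}$. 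Note the density (\ref{density}) has tail $f_T(x)\sim \mathrm{const}\cdot x^{-\frac{\alpha}{\alpha-1}-1}$ as $x\to\infty$, so $\mathbb{E}[T^{\beta}]<\infty$ exactly when $\beta<\frac{\alpha}{\alpha-1}$ and $=+\infty$ when $\beta\geq\frac{\alpha}{\alpha-1}$; this is what dictates the threshold, and in case (2) the result will follow simply by pushing the Fatou bound to the right endpoint (or by truncation: $\mathbb{E}[(n^{\alpha-1}T_1^{(n)})^{\beta}]\geq \mathbb{E}[(n^{\alpha-1}T_1^{(n)})^{\beta}\wedge M]\to \mathbb{E}[T^{\beta}\wedge M]$ and then let $M\to\infty$).

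For case (1), the main work is the uniform-integrability bound. I would obtain an explicit exact expression for $\mathbb{E}[(T_1^{(n)})^{\beta}]$. Writing $T_1^{(n)}=\sum_{k=1}^{\kappa_1^{(n)}} E_k$, where $E_k$ is the (exponential) holding time of $R^{(n)}$ at its $k$-th value and $\kappa_1^{(n)}$ is the jump at which block $\{1\}$ is first merged, one conditions on the block-counting chain $Y^{(n)}$ and on which block contains $1$. By exchangeability, given $Y_{k-1}^{(n)}=b$, the probability that the merge at step $k$ absorbs the block containing individual $1$, given that $X_k^{(n)}=l+1$ blocks merge, is $\frac{l+1}{b}$; combined with (\ref{pnk}) and the holding-time parameter $g_b$ from (\ref{gn}), this yields a formula of the shape
\begin{equation*}
\mathbb{E}[(T_1^{(n)})^{\beta}]=\sum_{b=2}^{n}\,\mathbb{P}\big(b\in \mathcal{Y}^{(n)},\ \{1\}\text{ survives until }b\big)\cdot \frac{h_b}{g_b}\cdot\mathbb{E}\big[(\text{residual sum})^{\beta}\big],
\end{equation*}
which after the usual manipulations reduces to a weighted sum over $b$ of $g_b^{-\beta}$-type terms times survival probabilities. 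The key analytic inputs are then: $g_b\sim C_0\Gamma(2-\alpha)\,b^{\alpha}/(\alpha-1)$ as $b\to\infty$ (from (\ref{gn}) and the leading term of (\ref{mainassum})), so that a single holding time at level $b$ is of order $b^{-\alpha}$; and the probability that $\{1\}$ is still a singleton when only $b$ blocks remain behaves like $b/n$ up to constants (as in \cite{dhersin2012length}). Plugging these in, $n^{\alpha-1}T_1^{(n)}$ is, up to lower-order corrections, $n^{\alpha-1}$ times a holding time at a random level of order $n$, i.e. of order $1$, and the $\beta$-th moment converges to $\mathbb{E}[T^{\beta}]$ provided the sum converges, which happens precisely when $\beta<\frac{\alpha}{\alpha-1}$. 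The hypotheses $2(\alpha-1)<\omega^{(1)}$ and $\zeta>1-1/\alpha$ (and $\beta(\alpha-1)<\omega^{(1)}$) are exactly what is needed to control, respectively, the error terms coming from the near-$1$ singularity of $\nu$ in the transition probabilities (\ref{pnk}) and the remainder $C_1 t^{-\alpha+\zeta}+o(\cdots)$ in (\ref{mainassum}), so that these do not contaminate the leading asymptotics.

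The main obstacle I anticipate is establishing the requisite uniform integrability/upper bound: the Fatou lower bound is free, but to conclude $\mathbb{E}[(n^{\alpha-1}T_1^{(n)})^{\beta}]\to \mathbb{E}[T^{\beta}]$ one needs a uniform-in-$n$ tail estimate on the law of $n^{\alpha-1}T_1^{(n)}$ matching the $x^{-\frac{\alpha}{\alpha-1}-1}$ decay of $f_T$. This requires bounding the probability that block $\{1\}$ stays a singleton while $Y^{(n)}$ spends an atypically long total time near a low level $b$ (where holding times are largest), and this is where the delicate estimates on the chain $Y^{(n)}$ — in particular second-moment control of $X_k^{(n)}$ and the behaviour of $p_{b,b-l}$ for large $l$, governed by $\omega$ and $\omega^{(1)}$ — enter. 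Concretely I would prove a bound of the form $\mathbb{P}(n^{\alpha-1}T_1^{(n)}>x)\leq C x^{-\frac{\alpha}{\alpha-1}}$ uniformly in $n$ and $x$, which upgrades the weak convergence to convergence of all moments of order $<\frac{\alpha}{\alpha-1}$ and simultaneously forces divergence at and above the threshold, completing both parts of the theorem.
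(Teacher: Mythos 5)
Your reduction of the problem is correct and matches the paper's: the Fatou/portmanteau lower bound is free, case (2) follows by truncation since $\mathbb{E}[T^{\beta}]=+\infty$ at and above the threshold $\frac{\alpha}{\alpha-1}$ (the paper does exactly this), and for case (1) everything hinges on a uniform-in-$n$ bound on $\mathbb{E}[(n^{\alpha-1}T_1^{(n)})^{\beta}]$, which by the paper's Lemma \ref{lem:ExpConv} upgrades weak convergence to convergence of moments. The gap is that you never actually establish that bound. Your first route --- an ``exact expression'' for $\mathbb{E}[(T_1^{(n)})^{\beta}]$ by conditioning on the chain $Y^{(n)}$ --- does not go through as sketched: $T_1^{(n)}$ is a sum of the holding times $E_1,\dots,E_{\kappa_1^{(n)}}$, and for non-integer $\beta\geq 2$ the quantity $\mathbb{E}\bigl[\bigl(\sum_k E_k\bigr)^{\beta}\bigr]$ does not reduce to ``a weighted sum over $b$ of $g_b^{-\beta}$-type terms times survival probabilities''; the cross-terms between holding times at different levels are not negligible and there is no clean closed form. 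Your second route --- a uniform tail bound $\mathbb{P}(n^{\alpha-1}T_1^{(n)}>x)\leq Cx^{-\frac{\alpha}{\alpha-1}}$ --- would indeed suffice, but it is asserted as a goal rather than proved, and it is at least as hard as the statement itself; ``delicate estimates on the chain'' is not an argument.

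The paper closes this gap differently and you should note the mechanism, because it is where the hypotheses $\beta<\frac{\alpha}{\alpha-1}$ and $\beta(\alpha-1)<\omega^{(1)}$ actually do their work. One writes the one-step decomposition $T_1^{(n)}=\frac{e_0}{g_n}+\sum_{k}\mathbf{1}_{\{H_{n,k}\}}\bar T_1^{(k)}$ at the first coalescence, applies the elementary convexity inequality $(a+b)^{\beta}\leq a^{\beta}+b^{\beta}+\beta2^{\beta-1}(ab^{\beta-1}+ba^{\beta-1})$ (Lemma \ref{inequality}), and runs an induction on $n$: assuming $\mathbb{E}[(k^{\alpha-1}T_1^{(k)})^{\beta}]\leq C$ for $k<n$, the dominant term is
$I_{n,2}=n^{-(\alpha-1)\beta}\sum_k p_{n,k}\frac{k-1}{n}(\frac{n}{k})^{(\alpha-1)\beta}\mathbb{E}[(k^{\alpha-1}T_1^{(k)})^{\beta}]\leq Cn^{-(\alpha-1)\beta}\bigl(1-\frac{\alpha-(\alpha-1)\beta}{2n(\alpha-1)}\bigr)$,
where Proposition \ref{h1} (valid precisely because $(\alpha-1)\beta<\omega^{(1)}$) supplies the expansion of the sum, and the correction is negative precisely because $\beta<\frac{\alpha}{\alpha-1}$. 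That negative margin of order $C/n$ absorbs the three remaining error terms, all of order $n^{-(\alpha-1)\beta-1}$ with constants growing only like $C^{(\beta-1)/\beta}$, once $C$ is chosen large enough; the induction then closes and gives the uniform bound without any tail estimate. This recursive self-improvement step is the missing idea in your proposal.
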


\begin{rem} Under the assumptions of Theorem \ref{cor1}, we can apply Theorem \ref{secondtheo}. Since $\frac{\alpha}{\alpha-1}>2, \omega^{(1)}>2(\alpha-1)$, Theorem \ref{secondtheo} with $\beta=1, 2$ leads to  
$$\lim_{n\rightarrow +\infty}Var(n^{\alpha-1}T_1^{(n)})=Var(T)<+\infty.$$
Hence $nVar(T_1^{(n)})=O(n^{3-2\alpha})$. Recall that $(n\mathbb{E}[T_1^{(n)}]-n^{2-\alpha}\mathbb{E}[T])^2=O(n^{6-4\alpha})$ (see (\ref{thirdterm})) and $n(n-1)\mbox{Cov}(T_1^{(n)},T_2^{(n)})=O(n^{5-3\alpha})$(Theorem \ref{thm1}).  Notice that we have 
 $ 5-3\alpha>6-4\alpha>3-2\alpha.$  Hence we deduce that $n(n-1)\mbox{Cov}(T_1^{(n)},T_1^{(n)})$ is the dominant term in (\ref{decom2}) . Theorem \ref{cor1} is then proved.
\end{rem}

For the Beta$(2-\alpha,\alpha)$-coalescent, Theorem \ref{secondtheo} gives:
\begin{cor}\label{corsecondtheo}
For Beta$(2-\alpha, \alpha)$-coalescent, we have 
\begin{enumerate}
\item If $0\leq \beta<\frac{\alpha}{\alpha-1}$, then $\displaystyle \lim_{n\rightarrow +\infty}\mathbb{E}[(n^{\alpha-1}T_1^{(n)})^{\beta}]=\mathbb{E}[T^{\beta}]$.
\item If $\beta\geq \frac{\alpha}{\alpha-1}$, then $\displaystyle \lim_{n\rightarrow +\infty}\mathbb{E}[(n^{\alpha-1}T_1^{(n)})^{\beta}]=+\infty$.
\end{enumerate}
\end{cor}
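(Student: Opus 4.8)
The plan is to obtain Corollary \ref{corsecondtheo} directly from Theorem \ref{secondtheo}: the whole task is to check that the Beta$(2-\alpha,\alpha)$-coalescent satisfies the hypotheses of that theorem and to identify the parameter $\omega^{(1)}$. First I would write the measures down explicitly. For the Beta$(2-\alpha,\alpha)$-coalescent one has, on $[0,1]$,
$$\Lambda(dx)=\frac{1}{\Gamma(2-\alpha)\Gamma(\alpha)}\,x^{1-\alpha}(1-x)^{\alpha-1}\,dx,\qquad \nu(dx)=x^{-2}\Lambda(dx)=\frac{1}{\Gamma(2-\alpha)\Gamma(\alpha)}\,x^{-1-\alpha}(1-x)^{\alpha-1}\,dx,$$
so that $\rho(t)=\int_t^1\nu(dx)=\frac{1}{\Gamma(2-\alpha)\Gamma(\alpha)}\int_t^1 x^{-1-\alpha}(1-x)^{\alpha-1}\,dx$.

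Next I would expand $\rho$ near $0$. Writing $(1-x)^{\alpha-1}=1-(\alpha-1)x+r(x)$, where $r$ is bounded on $[0,1]$ and $r(x)=O(x^2)$ as $x\to 0$, termwise integration gives $\int_t^1 x^{-1-\alpha}(1-x)^{\alpha-1}\,dx=\tfrac1\alpha t^{-\alpha}-t^{1-\alpha}+O(1)$ as $t\to 0$: indeed $\int_t^1 x^{-1-\alpha}\,dx=\tfrac1\alpha t^{-\alpha}-\tfrac1\alpha$, $\int_t^1 x^{-\alpha}\,dx=\tfrac1{\alpha-1}(t^{1-\alpha}-1)$ so that $-(\alpha-1)\int_t^1 x^{-\alpha}\,dx=1-t^{1-\alpha}$, and $\int_t^1 x^{-1-\alpha}r(x)\,dx$ stays bounded because $x^{-1-\alpha}r(x)=O(x^{1-\alpha})$ is integrable near $0$ for $\alpha<2$. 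Since $t^{1-\alpha}\to+\infty$, the $O(1)$ remainder is $o(t^{1-\alpha})$, so assumption (\ref{mainassum}) holds with
$$C_0=\frac{1}{\alpha\Gamma(2-\alpha)\Gamma(\alpha)}=\frac{1}{\Gamma(\alpha+1)\Gamma(2-\alpha)},\qquad C_1=-\frac{1}{\Gamma(2-\alpha)\Gamma(\alpha)},\qquad \zeta=1,$$
the value of $C_0$ agreeing with the one recorded in item $(4)$ of the list above. As $1>1-1/\alpha$ for every $\alpha\in(1,2)$, the requirement $\zeta>1-1/\alpha$ of Theorem \ref{secondtheo} is met.

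Then I would compute $\omega^{(1)}$. Here $\nu^{(1)}(dx)=(1-x)\nu(dx)=\frac{1}{\Gamma(2-\alpha)\Gamma(\alpha)}x^{-1-\alpha}(1-x)^{\alpha}\,dx$ on $[0,1]$, so for every $0<y<1$ the integral $\int_y^1(1-x)^{-u}\nu^{(1)}(dx)$ is a positive constant times $\int_y^1 x^{-1-\alpha}(1-x)^{\alpha-u}\,dx$, which converges if and only if $\alpha-u>-1$; hence $\omega^{(1)}=\alpha+1$. In particular $2(\alpha-1)=2\alpha-2<\alpha+1=\omega^{(1)}$ for all $\alpha\in(1,2)$, which is the remaining hypothesis $2(\alpha-1)<\omega^{(1)}$ of Theorem \ref{secondtheo}.

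Finally I would conclude. For part $(1)$, when $0\le\beta<\frac{\alpha}{\alpha-1}$ we have $\beta(\alpha-1)<\alpha<\alpha+1=\omega^{(1)}$, so the auxiliary condition $\beta(\alpha-1)<\omega^{(1)}$ in Theorem \ref{secondtheo}$(1)$ is automatic, and that theorem yields $\lim_{n\to+\infty}\mathbb{E}[(n^{\alpha-1}T_1^{(n)})^{\beta}]=\mathbb{E}[T^{\beta}]$, with the same limiting random variable $T$ (of density (\ref{density})) as in Theorem \ref{secondtheo}. Part $(2)$ is then exactly Theorem \ref{secondtheo}$(2)$. There is no real obstacle in this argument; the only step needing a touch of care is the control of the remainder in the expansion of the incomplete Beta integral, and this reduces to the elementary estimate $x^{-1-\alpha}\bigl((1-x)^{\alpha-1}-1+(\alpha-1)x\bigr)=O(x^{1-\alpha})$ near $0$ used above.
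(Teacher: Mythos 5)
Your proposal is correct and follows exactly the route the paper intends: the paper states the corollary as an immediate specialization of Theorem \ref{secondtheo}, relying on the facts (recorded in its remarks) that for the Beta$(2-\alpha,\alpha)$-coalescent one has $\zeta=1$ and $\omega^{(1)}=\alpha+1$, which you verify in full detail. Your checks of assumption (\ref{mainassum}), of $\zeta>1-1/\alpha$, of $2(\alpha-1)<\omega^{(1)}$, and of $\beta(\alpha-1)<\omega^{(1)}$ are all accurate.
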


\subsection{Organization of this paper}
{ We consider the ${\Lambda}$-coalescents which satisfy assumption (\ref{mainassum}). This includes $Beta(2-\alpha,\alpha)$-coalescents. In sections 2 and 3, we give estimates of $\mathbb{E}[T_1^{(n)}]$ and $\mathbb{E}[T_1^{(n)}T_2^{(n)}]$ respectively. Both $\mathbb{E}[T_1^{(n)}]$ and $\mathbb{E}[T_1^{(n)}T_2^{(n)}]$ satisfy  the same kind of recurrence  which allows to get their estimates and they lead to an estimate of $Cov(T_1^{(n)}, T_2^{(n)})$ in section 3. The main tool is Lemma \ref{toollem} given in appendix A. In section 4, we deal with Theorem \ref{secondtheo}.  %What we  need to do for Theorem \ref{cor1} is to find $\Delta(\alpha)$ for Beta$(2-\alpha,\alpha)$-coalescent.  
Section $5$ is the appendix where are given some proofs omitted before.} 

\section{First moment of $T_1^{(n)}$ by recursive method}
\subsection{The main result}
\begin{theo}\label{1t}
We assume that $\rho(t)$ satisfies $(\ref{mainassum})$.

\begin{enumerate}
\item If $0<\zeta<\alpha-1,\alpha-1+\zeta<\omega^{(1)},$ then 
%$$\frac{1}{g_n}=\frac{n^{-\alpha}}{C_0\Gamma(2-\alpha)}(1-n^{-\zeta}\frac{C_1\Gamma(2-\alpha+\zeta)}{C_0\Gamma(2-\alpha)}+o(n^{-\zeta})),$$
$$\mathbb{E}[T_1^{(n)}]=\frac{\alpha-1}{C_0\Gamma(2-\alpha)}n^{1-\alpha}-\frac{C_1\Gamma(2-\alpha+\zeta)(\alpha-1)^2}{(C_0\Gamma(2-\alpha))^2(\alpha-1-\zeta)}n^{1-\alpha-\zeta}+o(n^{1-\alpha-\zeta}).$$
\item If $\zeta=\alpha-1,2(\alpha-1)<\omega^{(1)},$ then 

%$$\frac{1}{g_n}=\frac{n^{-\alpha}}{C_0\Gamma(2-\alpha)}(1-n^{1-\alpha}\frac{C_1}{C_0\Gamma(2-\alpha)}+o(n^{1-\alpha})),$$

$$\mathbb{E}[T_1^{(n)}]=\frac{\alpha-1}{C_0\Gamma(2-\alpha)}n^{1-\alpha}-\frac{(\alpha-1)^2C_1}{(C_0\Gamma(2-\alpha))^2}n^{2(1-\alpha)}\ln n+o(n^{2(1-\alpha)}\ln n).$$
\item If $\zeta>\alpha-1,2(\alpha-1)<\omega^{(1)},$ then 
%$$\frac{1}{g_n}=\frac{n^{-\alpha}}{C_0\Gamma(2-\alpha)}(1-o(n^{1-\alpha})),$$
\begin{align*}
&\mathbb{E}[T_1^{(n)}]=\frac{\alpha-1}{C_0\Gamma(2-\alpha)}n^{1-\alpha}\\
&+\frac{(\alpha-1)^2}{C_0\Gamma(3-\alpha)}(\frac{\int_0^1((1-x)^{1-\alpha}-1-(\alpha-1)x)\nu^{(1)}(dx)}{C_0\Gamma(2-\alpha)}+\frac{(\alpha-1)C_2^{(1)}-C_2}{C_0\Gamma(2-\alpha)})n^{2(1-\alpha)}\\
&+o(n^{2(1-\alpha)}),\\
\end{align*}
where $\displaystyle C_2=\lim_{t\rightarrow 0}\int_t^1\rho(r)dr-\frac{C_0t^{1-\alpha}}{\alpha-1}, C_2^{(1)}=\lim_{t\rightarrow 0}\int_t^1\rho^{(1)}(r)dr-\frac{C_0t^{1-\alpha}}{\alpha-1},   \rho^{(1)}(t)=\int_t^1\nu^{(1)}(dx).$
\end{enumerate}
\end{theo}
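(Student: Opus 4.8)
The plan is to derive and analyze a recurrence for $u_n := \mathbb{E}[T_1^{(n)}]$, following the standard ``one-step'' analysis of external branches in $\Lambda$-coalescents. Conditioning on the first collision of the block-counting chain $R^{(n)}$: the holding time at state $n$ is exponential with parameter $g_n$, so it contributes $1/g_n$ to the length of external branch $1$ \emph{only if} individual $1$ is not involved in the first merger. Given that the first jump merges a group of size $a$ (chosen with the appropriate rate-weighting), the external branch survives with probability $1-a/n$ (by exchangeability, individual $1$ is a uniformly random one of the $n$ blocks), and in that case the problem restarts from $n-a$ blocks. This yields an identity of the shape
\begin{equation}\label{recur-plan}
u_n = \frac{1}{g_n}\,\mathbb{P}(1\notin\text{first merger}) + \sum_{a=2}^{n} \mathbb{P}(X_1^{(n)}=a-1)\Bigl(1-\frac{a}{n}\Bigr)u_{n-a+1},
\end{equation}
and after expanding the one-step probabilities via \eqref{pnk} and \eqref{gn} one obtains, following \cite{dhersin2012length}, a clean recurrence expressing $u_n$ in terms of $u_{n-1},\dots,u_2$ with coefficients built from the $\lambda_{n,a}$'s. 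The first task is to write this recurrence in the canonical form to which Lemma \ref{toollem} (appendix A) applies.

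Next I would insert the asymptotic assumption \eqref{mainassum} on $\rho$. The key analytic input is that $g_n = n(n-1)\int_0^1 t(1-t)^{n-2}\rho(t)\,dt$, and by Watson-type Laplace/Tauberian estimates with $\rho(t)=C_0 t^{-\alpha}+C_1 t^{-\alpha+\zeta}+o(t^{-\alpha+\zeta})$ one gets a two-term expansion $g_n = C_0\Gamma(2-\alpha) n^{\alpha} + (\text{lower order involving } C_1, \zeta) + \dots$; similarly the transition probabilities $p_{n,n-l}$ and the ``size-biased'' quantities $\sum_a a\,\mathbb{P}(X_1^{(n)}=a-1)$ admit expansions with a leading term and a correction of relative order $n^{-\zeta}$ (plus a correction of relative order $n^{-(\alpha-1)}$ coming from the $1-a/n$ factor and from finite-$n$ effects). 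These expansions are exactly where the constants $C_2$, $C_2^{(1)}$, $\rho^{(1)}$ and the measure $\nu^{(1)}$ enter: the correction term $\int_0^1((1-x)^{1-\alpha}-1-(\alpha-1)x)\nu^{(1)}(dx)$ is the regularized integral produced when comparing the true one-step decrement with its $\alpha$-stable idealization. I expect to need the hypotheses $\alpha-1<\zeta$ (resp. $=$, $<$) precisely to decide whether the ``model error'' term $n^{2(1-\alpha)}$ dominates, ties, or is dominated by the ``assumption error'' term $n^{1-\alpha-\zeta}$; and $2(\alpha-1)<\omega^{(1)}$ (or $\alpha-1+\zeta<\omega^{(1)}$) to guarantee that the relevant integrals against $\nu^{(1)}$ near $x=1$ converge, so that the error bounds in Lemma \ref{toollem} are summable.

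Then I would feed these expansions into Lemma \ref{toollem}. That lemma should be a general statement of the form: if a sequence $u_n$ satisfies a recurrence $u_n = (\text{something})\cdot(\text{weighted average of earlier }u_m) + r_n$ with the weights concentrating in a controlled way and $r_n \sim c\, n^{-\theta}$, then $u_n$ inherits an expansion $u_n = a n^{1-\alpha} + b n^{-\theta'} + o(\cdot)$ with explicitly computable $a,b$. The leading constant $a = (\alpha-1)/(C_0\Gamma(2-\alpha))$ matches $\mathbb{E}[T]$ from \eqref{density} (a good consistency check, and consistent with $n^{\alpha-1}T^{(n)}\to T$). The second-order constant is then read off by solving a linear ``fixed-point'' equation for $b$ coming from the subleading terms of $g_n$ and of the weights; the three cases of the theorem correspond to which subleading source wins, and in the borderline case $\zeta=\alpha-1$ the competition of two terms of the same polynomial order produces the extra $\ln n$ factor (a resonance phenomenon, handled by the logarithmic variant of Lemma \ref{toollem}).

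The main obstacle, I expect, is bookkeeping the two competing sources of correction simultaneously and with the right constants: the ``$C_1$-correction'' from \eqref{mainassum} at relative order $n^{-\zeta}$ and the ``discretization/finite-$n$ correction'' at relative order $n^{-(\alpha-1)}$ (the latter being intrinsic and present even for exact Beta-coalescents). Getting the precise coefficient in case (3) — in particular correctly identifying the regularized integral $\int_0^1((1-x)^{1-\alpha}-1-(\alpha-1)x)\,\nu^{(1)}(dx)$ and the constants $C_2,C_2^{(1)}$ as the finite parts of the divergent integrals $\int_t^1\rho$, $\int_t^1\rho^{(1)}$ — requires careful Taylor expansion of $(1-a/n)$ and of $(1-t)^{n-2}$ and matching of the $n^{2(1-\alpha)}$ coefficients, with no room for sloppiness about whether a given integral converges at $x=1$ (this is what $\omega^{(1)}$ controls). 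Once the recurrence is in the normalized form of Lemma \ref{toollem} and both correction orders are tracked, the three cases follow by applying the lemma with $\theta=\alpha-1+\zeta$ and $\theta=2(\alpha-1)$ respectively and comparing.
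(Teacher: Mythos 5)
Your plan follows the paper's proof almost step for step: the same first-collision recurrence, the same expansions of $1/g_n$ and of the weighted sums $\sum_k p_{n,k}\frac{k-1}{n}(n/k)^r$ (the paper's Proposition \ref{h1}, obtained via a change of measure from $\nu$ to $\nu^{(1)}$ in Lemma \ref{lemphk}), and the same use of the recurrence Lemma \ref{toollem} after subtracting the conjectured two-term expansion and renormalizing. Your reading of the hypotheses is also the paper's: the comparison of $\zeta$ with $\alpha-1$ decides which correction dominates, the critical case produces the logarithm, and $\omega^{(1)}$ guarantees integrability of $(1-x)^{-r}$ near $x=1$ (without which Proposition \ref{inverser} fails, cf.\ Remark \ref{rem2}).

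One concrete error in your setup: in your recurrence the waiting-time term is $\frac{1}{g_n}\,\mathbb{P}(1\notin\text{first merger})$, but the first holding period is \emph{always} part of $T_1^{(n)}$, whether or not individual $1$ takes part in the first collision (if it does, $T_1^{(n)}$ equals exactly that holding time). The correct identity is (\ref{mainequavsuia}): $\mathbb{E}[T_1^{(n)}]=\frac{1}{g_n}+\sum_{k=2}^{n-1}p_{n,k}\frac{k-1}{n}\mathbb{E}[T_1^{(k)}]$, as is clear from the pathwise decomposition $T_1^{(n)}=\frac{e_0}{g_n}+\sum_k\mathbf{1}_{H_{n,k}}\bar T_1^{(k)}$ used in Section 4. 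Your extra factor perturbs the inhomogeneous term by $O(n^{-\alpha-1})$, which after the paper's renormalization contributes $O(n^{\zeta-2})$ (resp.\ $O(n^{\alpha-3})$) to $b_n$; this is still $o(n^{-1})$ because $\zeta<1$ in case (1) and $\alpha<2$, so the slip happens not to change the answer at the orders stated, but it is a wrong identity and would matter at the next order. A smaller point: Lemma \ref{toollem} does not by itself produce a two-term expansion; it only shows that a normalized error sequence tends to $0$. One must first posit the constants $L$ and $Q$ (your ``fixed-point equation'' is, concretely, the requirement that $Q$ be the unique constant making the normalized $b_n=o(n^{-1})$), subtract, and only then apply the lemma, exactly as in (\ref{case1})--(\ref{case1recu}).
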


\begin{rem}{For the  $Beta(2-\alpha,\alpha)$-coalescent, we have ${\omega}^{(1)}={\alpha}+1, \zeta=1$. Hence we are in case $(3).$ }\end{rem}
The idea is to use the recurrence satisfied by $T_1^{(n)}$(see \cite{EJP2286}): 
\begin{equation}\label{mainequavsuia}
\mathbb{E}[T_1^{(n)}]=\frac{1}{g_n}+\sum_{k=2}^{n-1}p_{n,k}\frac{k-1}{n}\mathbb{E}[T_1^{(k)}].
\end{equation}

All the three cases in Theorem \ref{1t} will be proved in the same way. To give an idea of what should be done, we take case $(1)$ as example. 
By defining $L=\frac{\alpha-1}{C_0\Gamma(2-\alpha)}, Q=\frac{C_1\Gamma(2-\alpha+\zeta)(\alpha-1)^2}{(C_0\Gamma(2-\alpha))^2(\alpha-1-\zeta)},$ we transform the recurrence $(\ref{mainequavsuia})$ to 
\begin{align}\label{case1}
\left(\mathbb{E}[n^{\alpha-1}T_1^{(n)}]-L\right)n^{\zeta}+Q&=\left(\frac{n^{\alpha-1}}{g_n}-(1-\sum_{k=1}^{n-1}p_{n,k}\frac{k-1}{n}(\frac{n}{k})^{\alpha-1})L\right)n^{\zeta}+Q(1-\sum_{k=1}^{n-1}p_{n,k}\frac{k-1}{n}(\frac{n}{k})^{\alpha-1+\zeta})\nonumber\\
&+\sum_{k=2}^{n-1}(\frac{n}{k})^{\alpha-1+\zeta}p_{n,k}\frac{k-1}{n}\left(k^{\zeta}(\mathbb{E}[k^{\alpha-1}T_1^{(k)}]-L)+Q\right).\\\nonumber
\end{align}

Hence we get a recurrence 
\begin{equation}\label{case1recu}a_n=b_n+\sum_{k=2}^{n-1}q_{n,k}a_k, \end{equation}
 with
$$a_n=\left(\mathbb{E}[n^{\alpha-1}T_1^{(n)}]-L\right)n^{\zeta}+Q,$$
$$b_n=\left(\frac{n^{\alpha-1}}{g_n}-(1-\sum_{k=2}^{n-1}p_{n,k}\frac{k-1}{n}(\frac{n}{k})^{\alpha-1})L\right)n^{\zeta}+Q(1-\sum_{k=2}^{n-1}p_{n,k}\frac{k-1}{n}(\frac{n}{k})^{\alpha-1+\zeta}),$$
$$q_{n,k}=(\frac{n}{k})^{\alpha-1+\zeta}p_{n,k}\frac{k-1}{n}.$$

Then we should prove that $\displaystyle \lim_{n\rightarrow +\infty}a_n=0.$
It is natural to think about estimating $b_n$ as $n$ tends to infinity. To this aim, we should get asymptotics of $g_n$, $\frac{1}{g_n}$,  and  $\sum_{k=1}^{n-1}p_{n,k}\frac{(k-1)_l}{(n)_l}(\frac{n}{k})^{r}$  with $r\geq 0$ and $l\in \mathbb{N}$, where $(n)_l$ is (the same for $(k-1)_l$):

\[(n)_l=\left\{ 
\begin{array}{ll}
n(n-1)(n-2)\cdots(n-l+1) & \mbox{if $n\geq l\geq 1$},\\
0 & \mbox{if $l>n\geq 1$.}
\end{array}\right.
\]

 \subsection{Asymptotics  of  ${1}/{g_n}$}
For any $ c,d \in \mathbb{R}$, we have
 \begin{equation}\label{ncd}
\frac{\Gamma(n+c)}{\Gamma(n+d)}=n^{c-d}(1+n^{-1}(c-d)\frac{c+d-1}{2}+O(n^{-2})).
\end{equation}
This is the straightforward consequence of Stirling's formula:
\begin{equation}\label{stirling}
\Gamma(z)=\sqrt{2\pi}z^{z-1/2}e^{-z}(1+\frac{1}{12z}+O(\frac{1}{z^2})), z>0.
\end{equation}

It is then easy to get:
For real numbers $a$ and $b>-1$,
\begin{equation}\label{f1}\int_0^1(1-t)^{n+a}t^bdx=\frac{\Gamma(n+a+1)\Gamma(b+1)}{\Gamma(n+a+b+2)}=\Gamma(b+1)n^{-1-b}(1+n^{-1}(-1-b)\frac{b+2a+2}{2}+O(n^{-2})).\end{equation}

Recall that $g_n=n(n-1)\int_0^1t(1-t)^{n-2}\rho(t)dt$. Hence using $(\ref{f1})$, we get the following lemma.
\begin{lem}
\begin{enumerate}
\item If $\rho(t)$ satisfies the condition:
\begin{equation}\label{2c}\rho(t)=C_0t^{-\alpha}+o(t^{-\alpha}),C_0>0. \end{equation} Then 
$$g_n=C_0\Gamma(2-\alpha)n^{\alpha}+o(n^{\alpha}),\quad \text{and} \quad \frac{1}{g_n}=\frac{1}{C_0\Gamma(2-\alpha)}n^{-\alpha}+o(n^{-\alpha}).$$
\item If $\rho(t)$ satisfies $(4)$, then
$$g_n=C_0\Gamma(2-\alpha)n^{\alpha}-\frac{\alpha(\alpha-1)}{2}C_0\Gamma(2-\alpha)n^{\alpha-1}+C_1\Gamma(2-\alpha+\zeta)n^{\alpha-\zeta}+o(n^{\alpha-\zeta})+O(1),$$
and
\begin{equation}\label{gninverse}
\frac{1}{g_n}=\left\{
\begin{array}{ll}
\left(1-\frac{C_1\Gamma(2-\alpha+\zeta)}{C_0\Gamma(2-\alpha)}n^{-\zeta}+o(n^{-\zeta})\right)\frac{n^{-\alpha}}{C_0\Gamma(2-\alpha)} & \mbox{if $0<\zeta<1$},\\
 \left(1+(\frac{\alpha(\alpha-1)}{2}-\frac{C_1\Gamma(2-\alpha+\zeta)}{C_0\Gamma(2-\alpha)})n^{-1}+o(n^{-1})\right)\frac{n^{-\alpha}}{C_0\Gamma(2-\alpha)} & \mbox{if $\zeta=1$},\\
\left(1+\frac{\alpha(\alpha-1)}{2}n^{-1}+o(n^{-1}) \right)\frac{n^{-\alpha}}{C_0\Gamma(2-\alpha)} & $\mbox{if $\zeta>1$} .$\\
\end{array} \right.
\end{equation}
\end{enumerate}
\end{lem}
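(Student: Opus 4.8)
The plan is to prove the Lemma by computing the asymptotics of $g_n = n(n-1)\int_0^1 t(1-t)^{n-2}\rho(t)\,dt$ via the substitution of the expansion of $\rho$ from hypothesis \reff{mainassum} (i.e., $\rho(t)=C_0 t^{-\alpha}+C_1 t^{-\alpha+\zeta}+o(t^{-\alpha+\zeta})$) into the integral, term by term, and then inverting the resulting expansion. First I would split the integral into a neighbourhood $[0,\varepsilon]$ of $0$, where the expansion of $\rho$ is valid, and the remaining part $[\varepsilon,1]$, whose contribution is $O\big((1-\varepsilon)^n\big)$ and hence negligible (in fact absorbed into the $O(1)$ term after multiplying by $n(n-1)$); this is where the $+O(1)$ in the statement of case (2) comes from. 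On $[0,\varepsilon]$ I would use the exact Beta-integral identity \reff{f1}, namely $\int_0^1 (1-t)^{n+a}t^b\,dt = \Gamma(b+1)n^{-1-b}\big(1+n^{-1}(-1-b)\tfrac{b+2a+2}{2}+O(n^{-2})\big)$, applied with $a=-2$ and $b=1-\alpha$ for the leading term and $b=1-\alpha+\zeta$ for the next term. Extending the integration back from $[0,\varepsilon]$ to $[0,1]$ only changes things by exponentially small amounts.

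Carrying this out: the leading term gives $n(n-1)\cdot C_0\,\Gamma(2-\alpha)\,n^{\alpha-2}\big(1 - n^{-1}(2-\alpha)\tfrac{\alpha}{2} + O(n^{-2})\big)$, and using $n(n-1)=n^2(1-n^{-1})$ one collects the $n^\alpha$ and $n^{\alpha-1}$ coefficients; a short computation yields $C_0\Gamma(2-\alpha)n^\alpha - \tfrac{\alpha(\alpha-1)}{2}C_0\Gamma(2-\alpha)n^{\alpha-1}+\dots$, matching the claimed formula. The $C_1$-term similarly contributes $n(n-1)\cdot C_1\Gamma(2-\alpha+\zeta)n^{\alpha-2-\zeta}(1+O(n^{-1})) = C_1\Gamma(2-\alpha+\zeta)n^{\alpha-\zeta}+o(n^{\alpha-\zeta})$, and the $o(t^{-\alpha+\zeta})$ remainder in $\rho$ integrates to $o(n^{\alpha-\zeta})$ by a dominated-convergence / uniform-smallness argument on $[0,\varepsilon]$. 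Here one must be slightly careful comparing the orders $n^{\alpha-1}$ and $n^{\alpha-\zeta}$: if $\zeta\ge 1$ the $-\tfrac{\alpha(\alpha-1)}{2}C_0\Gamma(2-\alpha)n^{\alpha-1}$ term is of equal or larger order than the $C_1$-term, and if $\zeta<1$ it is smaller; the statement keeps both so no case split is needed at this stage.

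For $1/g_n$ I would factor out the leading term, writing $g_n = C_0\Gamma(2-\alpha)n^\alpha\big(1 + \delta_n\big)$ where $\delta_n = -\tfrac{\alpha(\alpha-1)}{2}n^{-1} + \tfrac{C_1\Gamma(2-\alpha+\zeta)}{C_0\Gamma(2-\alpha)}n^{-\zeta} + o(\cdots) \to 0$, and then expand $1/(1+\delta_n) = 1-\delta_n+O(\delta_n^2)$. The three cases in \reff{gninverse} come purely from which of $n^{-1}$ and $n^{-\zeta}$ dominates: for $0<\zeta<1$ the $n^{-\zeta}$ term wins and the $n^{-1}$ term is pushed into the $o(n^{-\zeta})$; for $\zeta=1$ the two combine into a single $n^{-1}$ coefficient $\big(\tfrac{\alpha(\alpha-1)}{2}-\tfrac{C_1\Gamma(2-\alpha+\zeta)}{C_0\Gamma(2-\alpha)}\big)$ with the sign flip from $-\delta_n$; for $\zeta>1$ only the $n^{-1}$ term survives at first order and the $C_1$-contribution is $o(n^{-1})$. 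Part (1) of the Lemma is the special, cruder case: under only \reff{2c} the same Beta-integral computation with no second-order control gives $g_n=C_0\Gamma(2-\alpha)n^\alpha+o(n^\alpha)$ and hence $1/g_n$ as stated.

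The main obstacle is not any single estimate — each is an elementary application of \reff{f1} — but the bookkeeping: ensuring the error terms from (a) the tail $[\varepsilon,1]$, (b) the $O(n^{-2})$ in the Beta-integral expansion, (c) the $o(t^{-\alpha+\zeta})$ remainder in $\rho$, and (d) the factor $n(n-1)$ versus $n^2$, are all correctly tracked and collapsed into the advertised $o(n^{\alpha-\zeta})+O(1)$ (for $g_n$) and $o(n^{-\zeta})$ or $o(n^{-1})$ (for $1/g_n$). In particular one should check that the interchange ``substitute the asymptotic expansion of $\rho$ under the integral sign'' is justified uniformly: since $t(1-t)^{n-2}$ concentrates mass near $t\sim 1/n$, only the behaviour of $\rho$ near $0$ matters, and the $o(t^{-\alpha+\zeta})$ error, say bounded by $\eta\, t^{-\alpha+\zeta}$ for $t<\varepsilon(\eta)$, integrates against $n^2 t(1-t)^{n-2}$ to something $\le C\eta\, n^{\alpha-\zeta}$, which is exactly $o(n^{\alpha-\zeta})$ as $\eta\downarrow 0$; this makes the reduction rigorous.
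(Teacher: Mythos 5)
Your proposal is correct and is essentially the paper's own argument: the paper proves this lemma in one line ("Recall that $g_n=n(n-1)\int_0^1t(1-t)^{n-2}\rho(t)dt$. Hence using \reff{f1}, we get the following lemma."), i.e.\ exactly the substitution of the expansion of $\rho$ into the Beta integral \reff{f1} followed by inversion of the resulting expansion, which you carry out with the error bookkeeping (tail near $1$, $o(t^{-\alpha+\zeta})$ remainder, $n(n-1)$ versus $n^2$) made explicit and correct.
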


\subsection{Calculus of $\sum_{k=1}^{n-1}p_{n,k}\frac{(k-1)_l}{(n)_l}(\frac{n}{k})^{r}, r\geq 0, l\in\mathbb{N}$}
For any $s\in \mathbb{R}$, we denote by 
$$g_{n}^{(s)}=\int_0^1(1-(1-x)^n-nx(1-x)^{n-1})\nu^{(s)}(dx){,}$$
the collision rates of the ${\Lambda}$-coalescent associated with the measure $\nu^{(s)}$. We recall that $\nu^{(s)}$ is defined in $(\ref{notation})$.  Notice that  $g_n^{(0)}=g_n$.

\begin{lem}\label{lemphk}
 Consider any $\Lambda$-coalescent process with measure $\nu$. Let $l\in\{1,2,\cdots,n-2\}$ fixed. Then for any real function $f$:
$$\sum_{k=1}^{n-1}p_{n,k}\frac{(k-1)_l}{(n)_l}f(k)=\mathbb{E}[\frac{(n-1-X_1^{(n)})_l}{(n)_l}]\mathbb{E}^{\nu^{(l)}}[f(n-X_1^{(n-l)})],$$
where $\mathbb{E}^{\nu^{(l)}}[*]$ means that the $\Lambda$-coalescent is associated with the measure $\nu^{(l)}$.
\end{lem}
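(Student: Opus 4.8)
The plan is to compute the left-hand side directly from the jump transition probabilities $p_{n,k}$ in \reff{pnk}, factor the falling-factorial weight out by a combinatorial identity, and recognize the remaining sum as an expectation over a coalescent with the shifted measure $\nu^{(l)}$. Recall from \reff{pnk} that for $1\le k\le n-1$ the chain $Y^{(n)}$ jumps from $n$ to $k$ with probability $p_{n,k} = \binom{n}{n-k+1}\lambda_{n,n-k+1}/g_n$, i.e. writing $a=n-k+1$ for the number of blocks merged (so $X_1^{(n)}=a-1=n-k$), one has $p_{n,k}=\binom{n}{a}\lambda_{n,a}/g_n$ with $\lambda_{n,a}=\int_0^1 x^a(1-x)^{n-a}\nu(dx)$. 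Substituting this into $\sum_{k=1}^{n-1}p_{n,k}\frac{(k-1)_l}{(n)_l}f(k)$ and re-indexing by $a=n-k+1\in\{2,\dots,n\}$ turns the sum into
\begin{equation*}
\frac{1}{g_n}\sum_{a=2}^{n}\binom{n}{a}\frac{(n-a)_l}{(n)_l}\,f(n-a+1)\int_0^1 x^a(1-x)^{n-a}\nu(dx).
\end{equation*}

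The key algebraic step is the identity $\binom{n}{a}\frac{(n-a)_l}{(n)_l}=\binom{n-l}{a}\frac{(n-l-1)\cdots}{\cdots}$ — more precisely, $\binom{n}{a}(n-a)_l = \binom{n-l}{a}(n)_l\cdot\frac{1}{?}$; concretely $\binom{n}{a}\frac{(n-a)_l}{(n)_l}=\binom{n-l}{a}\cdot\frac{(n-l)!\,n!}{n!\,(n-l)!}$ after cancellation, which one checks reduces to $\binom{n}{a}\frac{(n-a)!/(n-a-l)!}{n!/(n-l)!}=\binom{n-l}{a}$. Thus each falling-factorial weight $\frac{(n-a)_l}{(n)_l}$ converts $\binom{n}{a}$ into $\binom{n-l}{a}$, at the cost of pulling out the normalizing ratio. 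Writing $x^a(1-x)^{n-a}=x^a(1-x)^{(n-l)-a}(1-x)^l$ absorbs the factor $(1-x)^l$ into $\nu$, producing $\nu^{(l)}$ and hence the collision rates $\lambda^{(l)}_{n-l,a}$ of the $\nu^{(l)}$-coalescent on $n-l$ blocks. After this rearrangement the sum becomes a multiple of $\sum_{a=2}^{n-l}\binom{n-l}{a}\lambda^{(l)}_{n-l,a}f(n-a+1)$, i.e. $g_{n-l}^{(l)}\,\mathbb{E}^{\nu^{(l)}}[f(n-X_1^{(n-l)})]$ up to the leftover prefactor; and that prefactor, $\frac{g_{n-l}^{(l)}}{g_n}\cdot(\text{combinatorial constant})$, is exactly $\mathbb{E}[\frac{(n-1-X_1^{(n)})_l}{(n)_l}]$, which one verifies by applying the same identity with $f\equiv 1$ to the $\nu$-coalescent itself (the terms with $a>n-l$ contribute zero to $(n-a)_l$, so the range truncates automatically).

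The main obstacle — really the only delicate point — is bookkeeping the index ranges and checking that the truncation of the sum at $a=n-l$ (equivalently $k\ge l+1$) is automatic because $(n-a)_l=0$ when $a>n-l$, so no boundary terms are lost when passing between the $\nu$- and $\nu^{(l)}$-coalescents; and verifying that the two prefactors genuinely match, i.e. that $\mathbb{E}[\frac{(n-1-X_1^{(n)})_l}{(n)_l}]=\frac{1}{g_n}\sum_{a=2}^{n}\binom{n}{a}\frac{(n-a)_l}{(n)_l}\lambda_{n,a}$, which is immediate from the definition $X_1^{(n)}=n-Y_1^{(n)}$ and $\mathbb{P}(X_1^{(n)}=a-1)=p_{n,n-a+1}$. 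Once the two normalizations are identified the factorization in the statement follows, and the hypothesis $l\le n-2$ only serves to guarantee $n-l\ge 2$ so that the $\nu^{(l)}$-coalescent on $n-l$ blocks still admits at least one collision.
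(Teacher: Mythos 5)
Your proposal is correct and follows essentially the same route as the paper's proof: the same combinatorial identity $\binom{n}{n-k+1}\frac{(k-1)_l}{(n)_l}=\binom{n-l}{n-k+1}$, the same absorption of $(1-x)^l$ into $\nu$ to produce $\nu^{(l)}$ and the rates $g^{(l)}_{n-l}$, and the same identification of the prefactor $g^{(l)}_{n-l}/g_n$ with $\mathbb{E}[\frac{(n-1-X_1^{(n)})_l}{(n)_l}]$ via the case $f\equiv 1$. The only cosmetic difference is that the paper factors the sum as (normalization)$\times$(normalized average) up front, whereas you pull the prefactor out at the end.
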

\begin{proof}

Recall the definitions of $g_n$ and $p_{n,k}$(see (\ref{gn}), (\ref{pnk})). We have

\begin{align}\label{phk1}
\sum_{k=1}^{n-1}p_{n,k}\frac{(k-1)_l}{(n)_l}&=\sum_{k=l+1}^{n-1}\frac{\int_{0}^{1}{n-l\choose n-k+1}x^{n-k+1}(1-x)^{k-1}\nu(dx)}{g_{n}}\nonumber\\
&=\sum_{k=l+1}^{n-1}\frac{\int_{0}^{1}{n-l\choose n-k+1}x^{n-k+1}(1-x)^{k-1-l}\nu^{(l)}(dx)}{g_{n}}\nonumber\\
&=\sum_{k=1}^{n-1-l}\frac{\int_{0}^{1}{n-l\choose n-k-l+1}x^{n-k-l+1}(1-x)^{k-1}\nu^{(l)}(dx)}{g_{n}}=\frac{g^{(l)}_{n-l}}{g_n}.
\end{align}
Then,
\begin{align*}
\sum_{k=1}^{n-1}p_{n,k}\frac{(k-1)_l}{(n)_l}f(k)&=\left(\sum_{k=1}^{n-1}p_{n,k}\frac{(k-1)_l}{(n)_l}\right)\frac{\sum_{k=1}^{n-1}p_{n,k}\frac{(k-1)_l}{(n)_l}f(k)}{\sum_{k=1}^{n-1}p_{n,k}\frac{(k-1)_l}{(n)_l}}\\
&=\mathbb{E}[\frac{(n-1-X_1^{(n)})_l}{(n)_l}]\frac{\sum_{k=l+1}^{n-1}\int_0^1{n-l\choose n-k+1}x^{n-k+1}(1-x)^{k-1-l}f(k)\nu^{(l)}(dx)}{g^{(l)}_{n-l}}\\
&=\mathbb{E}[\frac{(n-1-X_1^{(n)})_l}{(n)_l}]\frac{\sum_{k=1}^{n-1-l}\int_0^1{n-l\choose n-k-l+1}x^{n-k-l+1}(1-x)^{k-1}f(k+l)\nu^{(l)}(dx)}{g^{(l)}_{n-l}}\\
&=\mathbb{E}[\frac{(n-1-X_1^{(n)})_l}{(n)_l}]\mathbb{E}^{\nu^{(l)}}[f(Y_1^{(n-l)}+l)]=\mathbb{E}[\frac{(n-1-X_1^{(n)})_l}{(n)_l}]\mathbb{E}^{\nu^{(l)}}[f(n-X_1^{(n-l)})].\\
\end{align*}
This achieves the proof of the lemma.\end{proof}

In consequence, 
\begin{equation}\label{2d}\sum_{k=1}^{n-1}p_{n,k}\frac{(k-1)_l}{(n)_l}(\frac{n}{k})^{r}=\mathbb{E}[\frac{(n-1-X_1^{(n)})_l}{(n)_l}]\mathbb{E}^{\nu^{(l)}}[(\frac{n}{n-X_1^{(n-l)}})^r].\end{equation} 

We have to study $\mathbb{E}[\frac{(n-1-X_1^{(n)})_l}{(n)_l}]$ and $\mathbb{E}^{\nu^{(l)}}[(\frac{n}{n-X_1^{(n-l)}})^r]$. The latter is very close to Proposition \ref{inverser} in appendix B. The following lemma studies the former. 

\begin{lem}\label{xl}
We assume that $\rho(t)$ satisfies $(\ref{mainassum})$.  Let $l\in\{1,2\cdots,n-2\}$ fixed. 
\begin{enumerate}
\item If $0<\zeta<\alpha-1$, then
\begin{align*}
&\mathbb{E}[\frac{(n-1-X_1^{(n)})_l}{(n)_l}]=1-\frac{l\alpha}{n(\alpha-1)}-\frac{lC_1\Gamma(2-\alpha+\zeta)\zeta}{C_0(\alpha-1)\Gamma(2-\alpha)(\alpha-1-\zeta)}n^{-\zeta-1}+o(n^{-\zeta-1}).
\end{align*}
\item If $\zeta=\alpha-1$, then
\begin{align*}
&\mathbb{E}[\frac{(n-1-X_1^{(n)})_l}{(n)_l}]=1-\frac{l\alpha}{n(\alpha-1)}-\frac{lC_1}{C_0\Gamma(2-\alpha)}n^{-\alpha}\ln n+o(n^{-\alpha}\ln n).\\
\end{align*}

\item If  $\zeta>\alpha-1$, then
\begin{align*}
&\mathbb{E}[\frac{(n-1-X_1^{(n)})_l}{(n)_l}]=1-\frac{l\alpha}{n(\alpha-1)}+\left(\sum_{j=2}^{l}{l\choose j}(-1)^j \frac{\int_0^1 j x^{j-1}\rho(x)dx}{C_0\Gamma(2-\alpha)}-\frac{C_2l}{C_0\Gamma(2-\alpha)}\right)n^{-\alpha}+o(n^{-\alpha}),
\end{align*}

where $C_2$ is the same as  in Theorem \ref{1t}.

\end{enumerate}
\end{lem}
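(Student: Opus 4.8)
The plan is to turn the quantity into a ratio of collision rates and expand both terms. Since $n-1-X_1^{(n)}=Y_1^{(n)}-1$, equation $(\ref{phk1})$ gives
$$\mathbb{E}\Big[\frac{(n-1-X_1^{(n)})_l}{(n)_l}\Big]=\sum_{k=1}^{n-1}p_{n,k}\frac{(k-1)_l}{(n)_l}=\frac{g^{(l)}_{n-l}}{g_n},$$
where, for $m=n-l$, $g^{(l)}_m=m(m-1)\int_0^1 t(1-t)^{m-2}\rho^{(l)}(t)\,dt$ and $\rho^{(l)}(t)=\int_t^1(1-x)^l\nu(dx)$. The expansion of $g_n$ is already available from Section 2.2, so the whole problem reduces to a precise three-term expansion of $g^{(l)}_{n-l}$.

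First I would expand $\rho^{(l)}$ near $0$. Writing $(1-x)^l=\sum_{j=0}^{l}\binom{l}{j}(-x)^j$ and using the Fubini identity $\int_t^1 x^{j}\nu(dx)=t^{j}\rho(t)+j\int_t^1 x^{j-1}\rho(x)\,dx$, one reduces $\rho^{(l)}$ to $\rho$ and to $\int_t^1\rho(x)\,dx$. Inserting $(\ref{mainassum})$ and the definition of $C_2$ (so that $\int_t^1\rho(x)\,dx=\frac{C_0 t^{1-\alpha}}{\alpha-1}+C_2+o(1)$ when $\zeta>\alpha-1$, and the corresponding explicit power, resp. logarithm, when $\zeta<\alpha-1$, resp. $\zeta=\alpha-1$), this yields as $t\to 0$
$$\rho^{(l)}(t)=C_0 t^{-\alpha}+C_1 t^{-\alpha+\zeta}-\frac{lC_0\alpha}{\alpha-1}\,t^{1-\alpha}+R_l(t),$$
where $R_l$ collects: the constant $D_l=\sum_{j=2}^{l}\binom{l}{j}(-1)^j\int_0^1 j x^{j-1}\rho(x)\,dx-lC_2$ (from the $j\ge 2$ terms and from $-lC_2$ in the $j=1$ term); a term $-lC_1\ln(1/t)$, present exactly when $\zeta=\alpha-1$; a $t^{1-\alpha+\zeta}$ term with combined coefficient $-lC_1\frac{\alpha-\zeta}{\alpha-1-\zeta}$ when $\zeta\ne\alpha-1$; and genuine $o(\cdot)$ errors.

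Next I would integrate this against the kernel $m(m-1)t(1-t)^{m-2}$ using $(\ref{f1})$ and $(\ref{ncd})$: a monomial $ct^{-s}$ contributes $c\,\Gamma(2-s)\frac{\Gamma(m+1)}{\Gamma(m+1-s)}=c\,\Gamma(2-s)m^{s}(1+O(1/m))$, a constant contributes itself exactly, and $\ln(1/t)$ contributes (by differentiating $B(2+s,m-1)$ at $s=0$) a quantity $\sim\ln m$. This produces the full expansion of $g^{(l)}_{n-l}$; beyond the leading $C_0\Gamma(2-\alpha)(n-l)^{\alpha}$, the decisive new ingredient is the $(n-l)^{\alpha-1}$-coefficient $-C_0\Gamma(2-\alpha)\big(\frac{\alpha(\alpha-1)}{2}+\frac{l\alpha(2-\alpha)}{\alpha-1}\big)$, the extra summand coming from the $-\frac{lC_0\alpha}{\alpha-1}t^{1-\alpha}$ term. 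Dividing by the known expansion of $g_n$, writing $(n-l)^{s}=n^{s}(1-l/n)^{s}$, and collecting: the common $C_1t^{-\alpha+\zeta}$ part cancels to leading order (surviving only at order $n^{-\zeta-1}$), while $(1-l/n)^{\alpha}$ together with that extra $(n-l)^{\alpha-1}$-coefficient gives $-\frac{l\alpha}{n}-\frac{l\alpha(2-\alpha)}{(\alpha-1)n}=-\frac{l\alpha}{n(\alpha-1)}$, the announced first correction; the third-order term is then whichever of $n^{-\zeta-1}$, $n^{-\alpha}\ln n$, $n^{-\alpha}$ is largest, producing cases (1), (2), (3) respectively, with $D_l/(C_0\Gamma(2-\alpha))$ as the $n^{-\alpha}$-coefficient in case (3). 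The hard part is the error analysis rather than the bookkeeping: one must check that the $o(\cdot)$ remainders in the $t\to 0$ expansion of $\rho^{(l)}$, and the bounded-but-uncontrolled behaviour of $\rho^{(l)}$ on a fixed neighbourhood of $1$, contribute only $o$ of the third term after integration against $t(1-t)^{n-l-2}$ (a dominated-convergence argument, using that $\nu$ is finite away from $0$ and $\int_0^1 x^{j-1}\rho(x)\,dx<\infty$ for $j\ge 2$), and --- more subtly --- that one has correctly identified which power or logarithm is the genuine leading correction once the dominant $C_1$-singularity has cancelled between $g^{(l)}_{n-l}$ and $g_n$, since a crude bound on $\rho^{(l)}$ alone overestimates the error.
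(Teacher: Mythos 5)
Your route is genuinely different from the paper's. The paper expands the falling factorial directly,
$$\frac{(n-1-X_1^{(n)})_l}{(n)_l}=\prod_{i=0}^{l-1}\Bigl(1-\frac{X_1^{(n)}+1}{n-i}\Bigr)=1-\sum_{i=0}^{l-1}\frac{X_1^{(n)}+1}{n-i}+\sum_{j\geq 2}(\cdots),$$
then reads off the linear term from Lemma \ref{x1} (the $n^{-\zeta}$, $n^{1-\alpha}\ln n$, or $n^{1-\alpha}$ correction to $\mathbb{E}[X_1^{(n)}]$ gets multiplied by the explicit factor $\sim l/n$, producing the third term of the expansion with no cancellation needed at that order) and the $j\geq 2$ terms from Lemma \ref{xk}, which supply exactly the $\sum_{j=2}^{l}\binom{l}{j}(-1)^j\int_0^1 jx^{j-1}\rho(x)dx$ contribution at order $n^{-\alpha}$. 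You instead use the identity $\sum_k p_{n,k}(k-1)_l/(n)_l=g^{(l)}_{n-l}/g_n$ from \reff{phk1} and expand both collision rates; your bookkeeping is right (I checked that your $m^{\alpha-1}$-coefficient and the recombination $-l\alpha-\frac{l\alpha(2-\alpha)}{\alpha-1}=-\frac{l\alpha}{\alpha-1}$ reproduce the first correction, and that in case (1) the surviving $n^{-\zeta-1}$ coefficient $\frac{lC_1\Gamma(2-\alpha+\zeta)}{C_0\Gamma(2-\alpha)}\bigl(\frac{\alpha}{\alpha-1}-\frac{\alpha-\zeta}{\alpha-1-\zeta}\bigr)$ equals the announced $-\frac{lC_1\Gamma(2-\alpha+\zeta)\zeta}{C_0(\alpha-1)\Gamma(2-\alpha)(\alpha-1-\zeta)}$). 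The one point you flag but do not carry out is the real content: a crude bound on the remainder $e(t)=o(t^{-\alpha+\zeta})$ gives only $o(n^{-\zeta})$ for the ratio, one order too weak in case (1). To rescue it you must compare the two remainder integrals against each other, writing $E_{n-l}-E_n=n(n-1)\int_0^1 t(1-t)^{n-2}\bigl(lt-\frac{2l}{n}+O(t^2)+O(t/n)\bigr)e(t)\,dt=o(n^{\alpha-\zeta-1})$, rather than bounding $E_{n-l}$ and $E_n$ separately; this works, but it amounts to re-proving a second-order ratio asymptotic of the same delicacy as Lemma \ref{x1} -- the lemma the paper's proof simply invokes. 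So your approach is correct and more self-contained structurally, at the price of redoing the hardest estimate that the paper's modularization lets it reuse.
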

\begin{proof} We have 
\begin{align*}
&\mathbb{E}[\frac{(n-1-X_1^{(n)})_l}{(n)_l}]=\mathbb{E}[1-\sum_{i=0}^{l-1}\frac{X_1^{(n)}+1}{n-i}+\sum_{j=2}^{l}\sum_{i_1, i_2, \cdots, i_j \text{all different}}(-1)^j\frac{(X_1^{(n)}+1)^j}{(n-i_1)(n-i_2)\cdots(n-i_j)}].\\
\end{align*}

{For $\mathbb{E}[\sum_{i=0}^{l-1}\frac{X_1^{(n)}+1}{n-i}]$, we use Lemma \ref{x1}} in appendix B. While using Lemme \ref{xk}, we get 
\begin{align}\label{xlalpha}
&\mathbb{E}[\sum_{j=2}^{l}\sum_{i_1, i_2, \cdots, i_j \text{all different}}(-1)^j\frac{(X_1^{(n)}+1)^j}{(n-i_1)(n-i_2)\cdots(n-i_j)}]=n^{-\alpha}\sum_{j=2}^{l}{l\choose j}(-1)^j \frac{\int_0^1 j x^{j-1}\rho(x)dx}{C_0\Gamma(2-\alpha)}+O(n^{-\min\{1+\alpha,j\}}).\nonumber\\
\end{align}
Then we conclude.

\end{proof}

Now we can finally give the estimate of $\sum_{k=1}^{n-1}p_{n,k}\frac{(k-1)_l}{(n)_l}(\frac{n}{k})^{r}$.

\begin{prop}\label{h1}
We assume that $\rho(t)$ satisfies (\ref{mainassum}). Let $l\in\{1,2,\cdots,n-2\}$ fixed and {$r\in[0, \omega^{(l)})$}. 

\begin{enumerate}

\item If $0<\zeta<\alpha-1$, then

\begin{align*}
&\sum_{k=1}^{n-1}p_{n,k}\frac{(k-1)_l}{(n)_l}(\frac{n}{k})^{r}=1+\frac{r-l\alpha}{n(\alpha-1)}+\frac{(r-l)C_1\Gamma(2-\alpha+\zeta)\zeta}{C_0(\alpha-1)\Gamma(2-\alpha)(\alpha-1-\zeta)}n^{-\zeta-1}+o(n^{-\zeta-1}).\\
\end{align*}

\item If $\zeta=\alpha-1, $ then
$$\sum_{k=1}^{n-1}p_{n,k}\frac{(k-1)_l}{(n)_l}(\frac{n}{k})^{r}=1+\frac{r-l\alpha}{n(\alpha-1)}+\frac{(r-l)C_1}{C_0\Gamma(2-\alpha)}n^{-\alpha}\ln n+o(n^{-\alpha}\ln n).$$
\item If $\zeta>\alpha-1,$ then
{\begin{align*}
\sum_{k=1}^{n-1}p_{n,k}\frac{(k-1)_l}{(n)_l}(\frac{n}{k})^{r}
=&1+\frac{(r-l\alpha)}{n(\alpha-1)}+(\frac{\int_0^1((1-x)^{-r}-1-rx)\nu^{(l)}(dx)}{C_0\Gamma(2-\alpha)}\\ 
&+\sum_{j=2}^{l}{l\choose j}(-1)^j \frac{\int_0^1 j x^{j-1}\rho(x)dx}{C_0\Gamma(2-\alpha)}+\frac{rC_2^{(l)}-lC_2}{C_0\Gamma(2-\alpha)})n^{-\alpha}+o(n^{-\alpha}),
\end{align*}
}
where $ C_2$ is the same as in Theorem \ref{1t} and $\displaystyle C^{(l)}_2=\lim_{t\rightarrow 0}\int_t^1\rho^{(l)}(r)dr-\frac{C_0t^{1-\alpha}}{\alpha-1}.$

\end{enumerate}
\end{prop}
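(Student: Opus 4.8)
The plan is to exploit the factorization \reff{2d},
$$\sum_{k=1}^{n-1}p_{n,k}\frac{(k-1)_l}{(n)_l}\Bigl(\frac{n}{k}\Bigr)^{r}=\mathbb{E}\Bigl[\frac{(n-1-X_1^{(n)})_l}{(n)_l}\Bigr]\,\mathbb{E}^{\nu^{(l)}}\Bigl[\Bigl(\frac{n}{n-X_1^{(n-l)}}\Bigr)^r\Bigr],$$
and to multiply the asymptotic expansions of the two factors. The first factor is exactly the quantity estimated in Lemma \ref{xl}, in the three regimes $\zeta<\alpha-1$, $\zeta=\alpha-1$, $\zeta>\alpha-1$, so nothing more is needed there; everything reduces to expanding the second factor $\mathbb{E}^{\nu^{(l)}}[(n/(n-X_1^{(n-l)}))^r]$ and then combining.

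For the second factor I would first check that $\nu^{(l)}$ is again of the form covered by \reff{mainassum}. Writing $\rho^{(l)}(t)=\int_t^1\nu^{(l)}(dx)=\sum_{j=0}^{l}\binom{l}{j}(-1)^j\int_t^1 x^j\nu(dx)$ and using the integration-by-parts identity $\int_t^1 x^j\nu(dx)=t^j\rho(t)+j\int_t^1 x^{j-1}\rho(x)\,dx$ (valid since $\rho(1)=0$) together with \reff{mainassum}, one sees that $\rho^{(l)}(t)$ equals $C_0 t^{-\alpha}$ plus lower-order terms, with the same leading constant $C_0$ and exponent $\alpha$; when $\zeta<1$ the subleading term is still $C_1 t^{-\alpha+\zeta}$, and in every case the constant $C_2^{(l)}=\lim_{t\to0}(\int_t^1\rho^{(l)}(r)\,dr-\frac{C_0 t^{1-\alpha}}{\alpha-1})$ is finite. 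I would then apply Proposition \ref{inverser} of Appendix B — the estimate of $\mathbb{E}[(n/(n-X_1^{(n)}))^r]$ for a general $\Lambda$-coalescent, applicable since $r<\omega^{(l)}$ — with $\nu$ replaced by $\nu^{(l)}$ and $n$ replaced by $n-l$; the passage from $n-l$ to $n$ in the denominator and the shift of the sample size only produce $O(n^{-2})$ corrections. This yields, in each regime, an expansion $\mathbb{E}^{\nu^{(l)}}[(n/(n-X_1^{(n-l)}))^r]=1+\frac{r}{n(\alpha-1)}+\cdots$, whose next term is $\frac{rC_1\Gamma(2-\alpha+\zeta)\zeta}{C_0(\alpha-1)\Gamma(2-\alpha)(\alpha-1-\zeta)}n^{-1-\zeta}$ when $\zeta<\alpha-1$, is $\frac{rC_1}{C_0\Gamma(2-\alpha)}n^{-\alpha}\ln n$ when $\zeta=\alpha-1$, and is $\frac{\int_0^1((1-x)^{-r}-1-rx)\nu^{(l)}(dx)+rC_2^{(l)}}{C_0\Gamma(2-\alpha)}n^{-\alpha}$ when $\zeta>\alpha-1$.

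It then remains to multiply the two expansions term by term. In each regime the product keeps the constant $1$; the $n^{-1}$ term, in which $-\frac{l\alpha}{n(\alpha-1)}$ from Lemma \ref{xl} and $\frac{r}{n(\alpha-1)}$ from the second factor combine into $\frac{r-l\alpha}{n(\alpha-1)}$; and exactly one further term, of order $n^{-1-\zeta}$, $n^{-\alpha}\ln n$ or $n^{-\alpha}$ according to the regime, obtained additively from the matching coefficients of the two factors (all cross products being of strictly smaller order, the largest stray one being $O(n^{-2})$, which is negligible since $\alpha<2$ and, in the first regime, $\zeta<1$). Collecting coefficients yields the three displayed formulas; in particular, in case (3) the $n^{-\alpha}$ coefficient is the sum of $\sum_{j=2}^{l}\binom{l}{j}(-1)^j\frac{\int_0^1 j x^{j-1}\rho(x)\,dx}{C_0\Gamma(2-\alpha)}-\frac{C_2 l}{C_0\Gamma(2-\alpha)}$, coming from Lemma \ref{xl}, and $\frac{\int_0^1((1-x)^{-r}-1-rx)\nu^{(l)}(dx)+rC_2^{(l)}}{C_0\Gamma(2-\alpha)}$, coming from the $\mathbb{E}^{\nu^{(l)}}$ factor.

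The step I expect to be the main obstacle is checking that $\nu^{(l)}$ falls under \reff{mainassum} with the right constants — i.e. tracking precisely how the expansion of $\rho$ is transformed by multiplication of $\nu$ by $(1-x)^l$, in particular getting the correct subleading exponent and the constant $C_2^{(l)}$ — together with the regime-by-regime bookkeeping when the two expansions are multiplied, since near $\zeta=\alpha-1$ the exponents $1+\zeta$, $\alpha$ (with a $\ln n$) and $\alpha$ are all close and one must be sure that no cross term of the same order is missed. Once that is settled, the statement follows routinely from \reff{2d}, Lemma \ref{xl} and Proposition \ref{inverser}.
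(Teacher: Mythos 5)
Your proposal is correct and follows exactly the paper's route: the authors also prove Proposition \ref{h1} by combining the factorization \reff{2d} with Lemma \ref{xl} for the first factor and the extension of Proposition \ref{inverser} to the measure $\nu^{(l)}$ (their Remark \ref{rem_ext}) for the second, then multiplying the expansions. You merely spell out the details — in particular why $\rho^{(l)}$ still satisfies \reff{mainassum} with the same $C_0$ and $\alpha$, and why the cross terms are negligible — that the paper leaves implicit.
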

\begin{proof}
The proof is a consequence of (\ref{2d}) and Remark~\ref{rem_ext}.
\end{proof}
\begin{rem}
The Remark \ref{rem2} following Proposition \ref{inverser} implies that if $r\geq \omega^{(r)}$, this proposition is not valid at least in case $(3)$. Since this proposition is critical for the proof of Theorem \ref{1t}(this will be shown in subsection $2.4$),  the assumptions about $\alpha$ and $\omega^{(1)}$ made in Theorem \ref{1t} are necessary.

\end{rem}

\subsection{Proof of Theorem \ref{1t}.}

\begin{enumerate}
\item
Recall the transformation $(\ref{case1})$ and the associated recurrence $(\ref{case1recu})$.  The aim is to prove that $\displaystyle \lim_{n\rightarrow +\infty}a_n=0$ for $a_n$ in $(\ref{case1recu})$.
 Under the assumptions  $0< \zeta<\alpha-1$, and $ \alpha-1+\zeta<\omega^{(1)}$,  using Proposition \ref{h1}, we get 
$$1-\sum_{k=1}^{n-1}p_{n,k}\frac{k-1}{n}(\frac{n}{k})^{\alpha-1}=\frac{1}{n(\alpha-1)}-\frac{(\alpha-2)C_1\Gamma(2-\alpha+\zeta)\zeta}{C_0(\alpha-1)\Gamma(2-\alpha)(\alpha-1-\zeta)}n^{-\zeta-1}+o(n^{-\zeta-1}),$$
$$1-\sum_{k=1}^{n-1}p_{n,k}\frac{k-1}{n}(\frac{n}{k})^{\alpha-1+\zeta}=\frac{1-\zeta}{n(\alpha-1)}-\frac{(\alpha-2+\zeta)C_1\Gamma(2-\alpha+\zeta)\zeta}{C_0(\alpha-1)\Gamma(2-\alpha)(\alpha-1-\zeta)}n^{-\zeta-1}+o(n^{-\zeta-1}),$$

and then $b_n=o(n^{-1})$ by $(\ref{gninverse})$. 
Let ${\varepsilon}>0$ such that $\alpha-1+\zeta+{\varepsilon}<\omega^{(1)}$. Then we have $1-\sum_{k=1}^{n-1}p_{n,k}\frac{k-1}{n}(\frac{n}{k})^{\alpha-1+\zeta+{\varepsilon}}=O(n^{-1})>0.$ Hence the recurrence $(\ref{case1recu})$ satisfies the assumptions of Lemma \ref{toollem} which leads to $\displaystyle \lim_{n\rightarrow +\infty} a_n=0.$ Then we can conclude.
\item  

Similarly, we transform recurrence (\ref{mainequavsuia}) to 
\begin{align*}
&\frac{n^{\alpha-1}}{\ln n}\left(\mathbb{E}[n^{\alpha-1}T_1^{(n)}]-L\right)+Q\\
&=\frac{n^{\alpha-1}}{\ln n}\left(\frac{n^{\alpha-1}}{g_n}-(1-\sum_{k=1}^{n-1}p_{n,k}\frac{k-1}{n}(\frac{n}{k})^{\alpha-1})L\right)+Q\left(1-\sum_{k=1}^{n-1}p_{n,k}\frac{k-1}{n}(\frac{n}{k})^{2(\alpha-1)}\frac{\ln k}{\ln n}\right)\\
&+\sum_{k=2}^{n-1}\frac{\ln k}{\ln n}p_{n,k}\frac{k-1}{n}(\frac{n}{k})^{2(\alpha-1)}\left(\frac{k^{\alpha-1}}{\ln k}(\mathbb{E}[k^{\alpha-1}T_1^{(k)}]-L)+Q\right),\\
\end{align*}
where $L=\frac{\alpha-1}{C_0\Gamma(2-\alpha)}, Q=\frac{(\alpha-1)^2C_1}{(C_0\Gamma(2-\alpha))^2}.$
Again, we denote by
$$a_n=\frac{n^{\alpha-1}}{\ln n}\left(\mathbb{E}[n^{\alpha-1}T_1^{(n)}]-L\right)+Q,$$
$$b_n=\frac{n^{\alpha-1}}{\ln n}\left(\frac{n^{\alpha-1}}{g_n}-(1-\sum_{k=1}^{n-1}p_{n,k}\frac{k-1}{n}(\frac{n}{k})^{\alpha-1})L\right)+Q\left(1-\sum_{k=1}^{n-1}p_{n,k}\frac{k-1}{n}(\frac{n}{k})^{2(\alpha-1)}\frac{\ln k}{\ln n}\right),$$
{$$q_{n,k}=(\frac{n}{k})^{2(\alpha-1)}\frac{\ln k}{\ln n}p_{n,k}\frac{k-1}{n},$$}
so that $a_n=b_n+\sum_{k=2}^{n-1}q_{n,k}a_k.$

Here the only thing that we do not know is the estimate of $\sum_{k=1}^{n-1}p_{n,k}\frac{k-1}{n}(\frac{n}{k})^{2(\alpha-1)}\frac{\ln k}{\ln n}$. Using the same idea as in Proposition \ref{h1}, we prove that : If $\rho(t)$ satisfies condition $(\ref{mainassum})$ with $\zeta=\alpha-1$, for $l$ fixed in $\{1,2,\cdots,n-2\}$ and $r\in[0,\omega^{(l)})$,
\begin{equation}\label{zeta=}\sum_{k=1}^{n-1}p_{n,k}\frac{(k-1)_l}{(n)_l}(\frac{n}{k})^{r}(\frac{\ln k}{\ln n})^{\beta}=1+\frac{r-l\alpha}{n(\alpha-1)}-\frac{\beta}{(\alpha-1)n\ln n}+\frac{(r-l)C_1}{C_0\Gamma(2-\alpha)}n^{-\alpha}\ln n+o(n^{-\alpha}\ln n).\end{equation}

 Every step is identical to the proof of the first case to get  $\displaystyle \lim_{n\rightarrow +\infty}a_n=0.$ We deduce that $\mathbb{E}[T_1^{(n)}]=Ln^{1-\alpha}-Qn^{2(1-\alpha)}\ln n+o(n^{2(1-\alpha)} \ln n).$

\item This case is similar to the first case, so we skip it for simplicity.

\end{enumerate}

\section{Estimate of $\mbox{Cov}(T_1^{(n)},T_2^{(n)})$}
Using Theorem 1.1 of \cite{EJP2286}, we have
\begin{equation}\label{2trecu}\mathbb{E}[T_1^{(n)}T_2^{(n)}]=\frac{2\mathbb{E}[T_1^{(n)}]}{g_n}+\sum_{k=1}^{n-1}p_{n,k}\frac{(k-1)_2}{(n)_2}\mathbb{E}[T_1^{(k)}T_2^{(k)}].
\end{equation}
The estimate of $\mathbb{E}[T_1^{(n)}T_2^{(n)}]$ can be obtained in the same way as in the proof of Theorem \ref{1t}.  At first, we give an estimate of $\frac{2\mathbb{E}[T_1^{(n)}]}{g_n}$ which is the consequence of (\ref{gninverse}) and Theorem \ref{1t}.  We assume that $\rho(t)$ satisfies (\ref{mainassum}).\begin{enumerate}
\item If $0<\zeta<\alpha-1, \alpha-1+\zeta<\omega^{(1)},$ then
$$\frac{2\mathbb{E}[T_1^{(n)}]}{g_n}=\frac{2n^{1-2\alpha}}{C_0\Gamma(2-\alpha)}(\frac{\alpha-1}{C_0\Gamma(2-\alpha)}-\frac{C_1\Gamma(2-\alpha+\zeta)(\alpha-1)(2\alpha-2-\zeta)}{(C_0\Gamma(2-\alpha))^2(\alpha-1-\zeta)}n^{-\zeta}+o(n^{-\zeta})).$$
\item If $\zeta=\alpha-1,2(\alpha-1)<\omega^{(1)},$ then
$$\frac{2\mathbb{E}[T_1^{(n)}]}{g_n}=\frac{2n^{1-2\alpha}}{C_0\Gamma(2-\alpha)}(\frac{\alpha-1}{C_0\Gamma(2-\alpha)}-\frac{C_1(\alpha-1)^2}{(C_0\Gamma(2-\alpha))^2}n^{1-\alpha}\ln n+o(n^{1-\alpha}\ln n)).$$
\item If  $\zeta>\alpha-1,2(\alpha-1)<\omega^{(1)},$ then
\begin{align*}
&\frac{2\mathbb{E}[T_1^{(n)}]}{g_n}=\frac{2n^{1-2\alpha}}{C_0\Gamma(2-\alpha)}\big(\frac{\alpha-1}{C_0\Gamma(2-\alpha)}\\
&+\frac{(\alpha-1)^2}{C_0\Gamma(3-\alpha)}(\frac{\int_0^1((1-x)^{1-\alpha}-1-(\alpha-1)x)\nu^{(1)}(dx)}{C_0\Gamma(2-\alpha)}+\frac{(\alpha-1)C_2^{(1)}-C_2}{C_0\Gamma(2-\alpha)})n^{1-\alpha}+o(n^{1-\alpha})\big).\\
\end{align*}

\end{enumerate}

We give the result and leave the proof to the reader.
\begin{theo}\label{2t}
We assume that $\rho(t)$ satisfies (\ref{mainassum}).\begin{enumerate}
\item If $0<\zeta<\alpha-1, (\alpha-1)+\zeta<\omega^{(1)},$ then

%$$\frac{2\mathbb{E}[T_1^{(n)}]}{g_n}=n^{1-2\alpha}\frac{2(\alpha-1)}{(C_0\Gamma(2-\alpha))^2}(1-n^{-\zeta}\frac{C_1\Gamma(2-\alpha+\zeta)(\alpha-\zeta)}{C_0\Gamma(2-\alpha)(\alpha-1-\zeta)}+o(n^{-1})).$$

\begin{align*}
&\mathbb{E}[T_1^{(n)}T_2^{(n)}]=(\frac{\alpha-1}{C_0\Gamma(2-\alpha)})^2n^{2(1-\alpha)}-\frac{2C_1\Gamma(2-\alpha+\zeta)}{\alpha-1-\zeta}(\frac{\alpha-1}{C_0\Gamma(2-\alpha)})^3n^{2(1-\alpha)-\zeta}+o(n^{2(1-\alpha)-\zeta}).\\
\end{align*}

\item If $\zeta=\alpha-1,2(\alpha-1)<\omega^{(1)},$ then
%$$\frac{2\mathbb{E}[T_1^{(n)}]}{g_n}=n^{1-2\alpha}\frac{2(\alpha-1)}{(C_0\Gamma(2-\alpha))^2}(1-n^{-1}\ln n\frac{C_1(\alpha-1)}{C_0\Gamma(2-\alpha)}+o(n^{-1}\ln n))$$
\begin{align*}
&\mathbb{E}[T_1^{(n)}T_2^{(n)}]=(\frac{\alpha-1}{C_0\Gamma(2-\alpha)})^2n^{2(1-\alpha)}-\frac{2C_1(\alpha-1)^3}{(C_0\Gamma(2-\alpha))^3}n^{3(1-\alpha)}\ln n+o(n^{3(1-\alpha)}\ln n).\\
\end{align*}

\item If  $\zeta>\alpha-1,2(\alpha-1)<\omega^{(1)},$ then

%\begin{align*}
%&\frac{2\mathbb{E}[T_1^{(n)}]}{g_n}\\
%&=n^{1-2\alpha}\frac{2(\alpha-1)}{(C_0\Gamma(2-\alpha))^2}(1+n^{1-\alpha}\frac{(\alpha-1)}{(2-\alpha)}(\frac{\int_0^1((1-x)^{1-\alpha}-1-(\alpha-1)x)\nu^{(1)}(dx)}{C_0\Gamma(2-\alpha)}+\frac{(\alpha-1)C_2^{(1)}-C_2}{C_0\Gamma(2-\alpha)}))\\
%&+o(n^{1-\alpha}))
%\end{align*}

\begin{align*}
&\mathbb{E}[T_1^{(n)}T_2^{(n)}]\\
&=(\frac{\alpha-1}{C_0\Gamma(2-\alpha)})^2n^{2(1-\alpha)}\\
&+\frac{\alpha-1}{3-\alpha}(\frac{\alpha-1}{C_0\Gamma(2-\alpha)})^2\left(B+\frac{2(\alpha-1)C_2^{(2)}+\int_0^12t\rho(t)dt-2C_2}{C_0\Gamma(2-\alpha)}+\frac{2}{2-\alpha}(A+\frac{(\alpha-1)C_2^{(1)}-C_2}{C_0\Gamma(2-\alpha)})\right)n^{3(1-\alpha)}\\
&\quad+o(n^{3(1-\alpha)}),\\
\end{align*}
where $ C_2$ and $\displaystyle C^{(l)}_2$ are the same as in Proposition \ref{h1}, and

$$\displaystyle A=\frac{\int_0^1((1-x)^{1-\alpha}-1-(\alpha-1)x)\nu^{(1)}(dx)}{C_0\Gamma(2-\alpha)},B=\frac{\int_0^1((1-x)^{2(1-\alpha)}-1-2(\alpha-1)x)\nu^{(2)}(dx)}{C_0\Gamma(2-\alpha)}.$$

\end{enumerate}
\end{theo}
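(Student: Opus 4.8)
\textbf{Plan of proof for Theorem \ref{2t}.}
The strategy mirrors exactly the recursive method developed for Theorem \ref{1t}. Starting from the recurrence \reff{2trecu}, one isolates the leading constant $L^2=\left(\frac{\alpha-1}{C_0\Gamma(2-\alpha)}\right)^2$ and the correction constant (call it $Q_2$, whose value depends on which of the three cases we are in), and performs the same algebraic manipulation that produced \reff{case1} and \reff{case1recu}: dividing through by the appropriate power of $n$ (or $n^{\alpha-1}/\ln n$ in the critical case $\zeta=\alpha-1$), and comparing $\mathbb{E}[n^{2(\alpha-1)}T_1^{(n)}T_2^{(n)}]$ with $L^2$. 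Concretely one sets $a_n$ to be the rescaled discrepancy $\bigl(\mathbb{E}[n^{2(\alpha-1)}T_1^{(n)}T_2^{(n)}]-L^2\bigr)n^{\theta}+Q_2$ for the suitable exponent $\theta$ ($\theta=\zeta$, resp.\ handled with a $\ln n$ factor, resp.\ $\theta=\alpha-1$ in the three cases), and rewrites \reff{2trecu} as $a_n=b_n+\sum_{k=2}^{n-1}q_{n,k}a_k$ with $q_{n,k}=(\frac{n}{k})^{2(\alpha-1)+\theta}p_{n,k}\frac{(k-1)_2}{(n)_2}$. The goal is then $\lim_{n\to\infty}a_n=0$, which by Lemma \ref{toollem} reduces to showing $b_n=o(n^{-1})$ together with the positivity/summability condition $1-\sum_{k}p_{n,k}\frac{(k-1)_2}{(n)_2}(\frac{n}{k})^{2(\alpha-1)+\theta+\varepsilon}=O(n^{-1})>0$ for small $\varepsilon$, which holds under the stated hypotheses on $\alpha$, $\zeta$, and $\omega^{(1)}$ (noting that $\omega^{(2)}\geq\omega^{(1)}-$something controls the relevant $r$-range, and $2(\alpha-1)<\omega^{(1)}$ is what is needed since the exponents appearing are near $2(\alpha-1)$).

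The input to estimating $b_n$ is twofold. First, the already-established estimate of $\frac{2\mathbb{E}[T_1^{(n)}]}{g_n}$ displayed just before the theorem (itself a consequence of \reff{gninverse} and Theorem \ref{1t}) supplies the inhomogeneous term of the recurrence to the required order. Second, one needs the asymptotics of $\sum_{k=1}^{n-1}p_{n,k}\frac{(k-1)_2}{(n)_2}(\frac{n}{k})^{r}$, which is exactly Proposition \ref{h1} with $l=2$ (and, in the critical case, the variant \reff{zeta=} with the $(\ln k/\ln n)^\beta$ factor). Plugging $l=2$ into Proposition \ref{h1} produces the combinations $\frac{\int_0^1((1-x)^{-r}-1-rx)\nu^{(2)}(dx)}{C_0\Gamma(2-\alpha)}$, the $j=2$ term $\frac{\int_0^1 2x\rho(x)dx}{C_0\Gamma(2-\alpha)}$, and the $C_2^{(2)}$, $C_2$ constants; these are precisely the ingredients that assemble into the constants $A$, $B$ and the bracketed expression in case $(3)$. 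So the ``proof'' is: expand $b_n$ using these two estimates, check the cancellation of the $n^{-1}$ term (this is where the precise constant $Q_2$ is forced and the value of the limit is read off), verify $b_n=o(n^{-1})$, apply Lemma \ref{toollem}, and translate $a_n\to 0$ back into the claimed expansion of $\mathbb{E}[T_1^{(n)}T_2^{(n)}]$.

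The main obstacle is purely computational bookkeeping rather than conceptual: in case $(3)$ one must carry \emph{two} orders of expansion simultaneously (the $n^{-1}$ term must cancel exactly so that the genuine $n^{-\alpha}$-type, i.e.\ $n^{3(1-\alpha)}$ after rescaling, term survives), and the inhomogeneous term $\frac{2\mathbb{E}[T_1^{(n)}]}{g_n}$ contributes at the same order as the $l=2$ correction from Proposition \ref{h1}, so the two must be added with care; moreover the factor $\frac{k-1}{n}$ versus $\frac{(k-1)_2}{(n)_2}$ and the doubled exponent $2(\alpha-1)$ shift which $\omega^{(s)}$ is the binding constraint, so one must double-check that $r=2(\alpha-1)<\omega^{(2)}$ is available — this follows since the hypotheses give $2(\alpha-1)<\omega^{(1)}$ and, for the measures under consideration, $\omega^{(2)}\geq\omega^{(1)}$ is not automatic but $\int_y^1(1-x)^{-r}\nu^{(2)}(dx)\leq\int_y^1(1-x)^{-r}\nu^{(1)}(dx)$ so in fact $\omega^{(2)}\geq\omega^{(1)}$. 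Because every one of these steps is a line-by-line analogue of a step already carried out in Section 2, the authors are justified in stating the result and ``leav[ing] the proof to the reader''.
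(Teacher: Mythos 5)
Your plan coincides with the paper's intended proof: the authors explicitly reduce Theorem \ref{2t} to the recurrence \reff{2trecu}, the displayed expansion of $\frac{2\mathbb{E}[T_1^{(n)}]}{g_n}$, Proposition \ref{h1} with $l=2$, and Lemma \ref{toollem}, exactly as you describe, and then leave the bookkeeping to the reader. Your additional checks (the cancellation of the $n^{-1}$ term forcing $Q_2$, and the verification that the relevant exponents stay below $\omega^{(2)}\geq\omega^{(1)}$ since $\nu^{(2)}(dx)=(1-x)\nu^{(1)}(dx)$) are correct and fill in precisely the points the paper omits.
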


Now combining Theorem \ref{1t} and \ref{2t}, we can get the estimate of $\mbox{Cov}(T_1^{(n)},T_2^{(n)}).$
\begin{cor}\label{thm4}
We assume that $\rho(t)$ satisfies (\ref{mainassum}).

\begin{enumerate}

\item If $0<\zeta<\alpha-1,\alpha-1+\zeta<\omega^{(1)},$ then 
$$\mbox{Cov}(T_1^{(n)},T_2^{(n)})=o(n^{2(1-\alpha)-\zeta}).$$

\item If $\zeta=\alpha-1,2(\alpha-1)<\omega^{(1)},$ then 
$$\mbox{Cov}(T_1^{(n)},T_2^{(n)})=o(n^{3(1-\alpha)}\ln n).$$

\item If $\zeta>\alpha-1,2(\alpha-1)<\omega^{(1)},$ then 
$$\mbox{Cov}(T_1^{(n)},T_2^{(n)})=\Delta(\alpha)n^{3(1-\alpha)}+o(n^{3(1-\alpha)}),$$
where $\Delta(\alpha)=\frac{\int_0^1((1-x)^{2-\alpha}-1)^2\nu(dx)}{3-\alpha}(\frac{\alpha-1}{C_0\Gamma(2-\alpha)})^3.$
\end{enumerate}

\end{cor}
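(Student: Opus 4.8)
The plan is to subtract the expansions of $\mathbb{E}[T_1^{(n)}]$ and $\mathbb{E}[T_1^{(n)}T_2^{(n)}]$ furnished by Theorems \ref{1t} and \ref{2t}. By exchangeability $T_2^{(n)}\stackrel{(d)}{=}T_1^{(n)}$, so $\mbox{Cov}(T_1^{(n)},T_2^{(n)})=\mathbb{E}[T_1^{(n)}T_2^{(n)}]-\big(\mathbb{E}[T_1^{(n)}]\big)^2$. Set $L=\frac{\alpha-1}{C_0\Gamma(2-\alpha)}$. In each regime Theorem \ref{1t} gives $\mathbb{E}[T_1^{(n)}]=Ln^{1-\alpha}+e_n$, with $e_n$ the explicit second-order term, so $\big(\mathbb{E}[T_1^{(n)}]\big)^2=L^2n^{2(1-\alpha)}+2Ln^{1-\alpha}e_n+e_n^2$; the term $e_n^2$ is of smaller order than $n^{1-\alpha}e_n$, and $L^2n^{2(1-\alpha)}$ is exactly the leading term of $\mathbb{E}[T_1^{(n)}T_2^{(n)}]$ in Theorem \ref{2t}. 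Hence everything reduces to comparing the second-order contributions.

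In cases (1) and (2) this comparison produces a further exact cancellation. Writing $M$ for the second coefficient of $\mathbb{E}[T_1^{(n)}]$, a one-line computation shows that $2LM$ equals the second coefficient of $\mathbb{E}[T_1^{(n)}T_2^{(n)}]$ in Theorem \ref{2t}: for instance, when $\zeta<\alpha-1$ one has $M=\frac{C_1\Gamma(2-\alpha+\zeta)(\alpha-1)^2}{(C_0\Gamma(2-\alpha))^2(\alpha-1-\zeta)}$, whence $2LM=\frac{2C_1\Gamma(2-\alpha+\zeta)}{\alpha-1-\zeta}L^3$, which is precisely the coefficient of $n^{2(1-\alpha)-\zeta}$ in Theorem \ref{2t}(1); the case $\zeta=\alpha-1$ is identical with the extra $\ln n$ factor carried along. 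Consequently only the remainders of Theorems \ref{1t} and \ref{2t} survive, giving $\mbox{Cov}(T_1^{(n)},T_2^{(n)})=o(n^{2(1-\alpha)-\zeta})$ in case (1) and $o(n^{3(1-\alpha)}\ln n)$ in case (2).

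Case (3) carries the real content. Here $\mathbb{E}[T_1^{(n)}]=Ln^{1-\alpha}+Kn^{2(1-\alpha)}+o(n^{2(1-\alpha)})$ with $K=\frac{(\alpha-1)^2}{C_0\Gamma(3-\alpha)}\big(A+\frac{(\alpha-1)C_2^{(1)}-C_2}{C_0\Gamma(2-\alpha)}\big)$, $A$ being as in Theorem \ref{2t}; using $\Gamma(3-\alpha)=(2-\alpha)\Gamma(2-\alpha)$ one gets $2LK=\frac{2(\alpha-1)L^2}{2-\alpha}\big(A+\frac{(\alpha-1)C_2^{(1)}-C_2}{C_0\Gamma(2-\alpha)}\big)$, and the expansion of $\big(\mathbb{E}[T_1^{(n)}]\big)^2$ is $L^2n^{2(1-\alpha)}+2LKn^{3(1-\alpha)}+o(n^{3(1-\alpha)})$. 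Subtracting this from the coefficient of $n^{3(1-\alpha)}$ in Theorem \ref{2t}(3) and using $\frac{1}{3-\alpha}-1=-\frac{2-\alpha}{3-\alpha}$, the $\frac{2}{2-\alpha}$-term of Theorem \ref{2t} combines with $-2LK$ so that $C_2^{(1)}$ and one copy of $C_2$ drop out, leaving
$$\mbox{Cov}(T_1^{(n)},T_2^{(n)})=\frac{(\alpha-1)L^2}{3-\alpha}\left(B-2A+\frac{2(\alpha-1)(C_2^{(2)}-C_2^{(1)})+\int_0^12t\rho(t)dt}{C_0\Gamma(2-\alpha)}\right)n^{3(1-\alpha)}+o(n^{3(1-\alpha)}).$$
It then remains to identify the bracket with $\frac{L}{\alpha-1}\int_0^1\big((1-x)^{2-\alpha}-1\big)^2\nu(dx)$, which is the last and, I expect, the main step.

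This identity is established by converting everything to integrals against $\nu$. Using $\nu^{(1)}(dx)=(1-x)\nu(dx)$ and $\nu^{(2)}(dx)=(1-x)^2\nu(dx)$ and expanding, $C_0\Gamma(2-\alpha)(B-2A)$ differs from $\int_0^1((1-x)^{2-\alpha}-1)^2\nu(dx)$ by $\int_0^1\big(x^2-2(\alpha-1)x^2(1-x)\big)\nu(dx)$, via the elementary identities $1+(1-x)^2-2(1-x)=x^2$ and $(1-x)^2-(1-x)=-x(1-x)$; note $\int_0^1x^2\nu(dx)<\infty$ because $\rho(t)=C_0t^{-\alpha}+o(t^{-\alpha})$ with $\alpha<2$. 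Two Fubini computations close the gap: $\int_0^12t\rho(t)\,dt=\int_0^1\big(\int_0^x2t\,dt\big)\nu(dx)=\int_0^1x^2\nu(dx)$, and, since $(\rho^{(2)}-\rho^{(1)})(r)=-\int_r^1x(1-x)\nu(dx)$, $C_2^{(2)}-C_2^{(1)}=\lim_{t\to0}\int_t^1(\rho^{(2)}-\rho^{(1)})(r)\,dr=-\lim_{t\to0}\big(\int_t^1x^2(1-x)\nu(dx)-t\int_t^1x(1-x)\nu(dx)\big)=-\int_0^1x^2(1-x)\nu(dx)$, the boundary term being $O(t^{2-\alpha})\to0$. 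The hypotheses $\zeta>\alpha-1$ and $2(\alpha-1)<\omega^{(1)}$ are exactly what make Theorems \ref{1t} and \ref{2t} applicable and keep all these integrals convergent. Combining everything, the bracket equals $\frac{L}{\alpha-1}\int_0^1((1-x)^{2-\alpha}-1)^2\nu(dx)$, so $\mbox{Cov}(T_1^{(n)},T_2^{(n)})=\Delta(\alpha)n^{3(1-\alpha)}+o(n^{3(1-\alpha)})$ with $\Delta(\alpha)=\frac{\int_0^1((1-x)^{2-\alpha}-1)^2\nu(dx)}{3-\alpha}L^3$, as claimed.
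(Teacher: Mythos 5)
Your proposal is correct and follows essentially the same route as the paper: subtract $\big(\mathbb{E}[T_1^{(n)}]\big)^2$ from $\mathbb{E}[T_1^{(n)}T_2^{(n)}]$ using Theorems \ref{1t} and \ref{2t}, observe the cancellation of the second-order terms in cases (1)--(2), and in case (3) reduce the surviving coefficient to $B-2A+\frac{2(\alpha-1)(C_2^{(2)}-C_2^{(1)})+\int_0^1 2t\rho(t)dt}{C_0\Gamma(2-\alpha)}$, which you then identify with $\frac{1}{C_0\Gamma(2-\alpha)}\int_0^1((1-x)^{2-\alpha}-1)^2\nu(dx)$ via exactly the same integral computations as the paper (your only slip is the parenthetical claim that ``$C_2^{(1)}$ drops out''---in fact both copies of $C_2$ cancel while $C_2^{(1)}$ survives inside $C_2^{(2)}-C_2^{(1)}$, as your own displayed formula correctly shows).
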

\begin{proof}
We skip the easy cases $(1)$ and $(2)$. For case $(3)$, we get from Theorem \ref{1t} and \ref{2t} that 
$$\mbox{Cov}(T_1^{(n)},T_2^{(n)})=\frac{n^{3(1-\alpha)}}{3-\alpha}\frac{(\alpha-1)^3}{(C_0\Gamma(2-\alpha))^2}(B-2A+\frac{2(\alpha-1)(C_2^{(2)}-C_2^{(1)})+\int_0^12t\rho(t)dt}{C_0\Gamma(2-\alpha)})+o(n^{3(1-\alpha)}).$$
Hence 
\begin{equation}\label{delta}\Delta(\alpha)=\frac{(\alpha-1)^3}{(3-\alpha)(C_0\Gamma(2-\alpha))^2}(B-2A+\frac{2(\alpha-1)(C_2^{(2)}-C_2^{(1)})+\int_0^12t\rho(t)dt}{C_0\Gamma(2-\alpha)}).\end{equation}
Notice that 
$$B-2A=\frac{\int_0^1\left((1-x)^{2(2-\alpha)}-2(1-x)^{2-\alpha}+1-x^2+2(\alpha-1)x^2(1-x)\right)\nu(dx)}{C_0\Gamma(2-\alpha)}.$$
By definition, 
\begin{align*}
&C_2^{(2)}-C_2^{(1)}=\lim_{t\rightarrow +\infty}\int_t^1(\rho^{(2)}(x)-\rho^{(1)}(x))dx=\lim_{t\rightarrow 0}\int_t^1x(\nu^{(2)}(dx)-\nu^{(1)}(dx))=\int_0^1-x^2(1-x)\nu(dx),
\end{align*}
and  $\int_0^12t\rho(t)dt=\int_0^1x^2\nu(dx).$ Then we get the result.
\end{proof}

\subsection{Beta($2-\alpha,\alpha$)}

For the Beta$(2-\alpha,\alpha)$-coalescent, we have $\zeta=1, \omega^{(1)}=1+\alpha$, so $\zeta>\alpha-1, \omega^{(1)}>2(\alpha-1).$ Hence this process is in the case (3) of Corollary \ref{thm4}.  We can compute explicitly the constants $C_2,C_2^{(1)},C_2^{(2)}, A , B$ in Theorem \ref{2t}:
\begin{enumerate}
\item  $C_2=\frac{1}{1-\alpha}, C_2^{(1)}=\frac{\alpha}{1-\alpha},C_2^{(2)}=\frac{\alpha^2+\alpha}{2(1-\alpha)}$. 

\item  $A=\alpha(\alpha^2-\alpha-1)\Gamma(\alpha-1),$
 $B=\frac{1}{(\alpha-1)}(\frac{\Gamma(4-\alpha)}{\Gamma(4-2\alpha)}+(\alpha^2-\alpha-1)\Gamma(\alpha+2)).$
 \item $\Delta(\alpha)=\frac{\left((\alpha-1)\Gamma(\alpha+1)\right)^2\Gamma(4-\alpha)}{(3-\alpha)\Gamma(4-2\alpha)}.$
 \end{enumerate}

\begin{proof}
\begin{enumerate}
\item 
For Beta$(2-\alpha,\alpha)$-coalescent, we have for $C_0=\frac{1}{\alpha\Gamma(\alpha)\Gamma(2-\alpha)}$ and for $l =0,1,2$, 
$$\nu^{(l)}(dx)=\frac{1}{\Gamma(\alpha)\Gamma(2-\alpha)}x^{-1-\alpha}(1-x)^{\alpha-1+l}dx,\quad \text{and} \quad \rho^{(l)}(t)=\frac{1}{\alpha\Gamma(\alpha)\Gamma(2-\alpha)}t^{-\alpha}+O(t^{1-\alpha}).$$

Using integration by parts,  

\begin{align*}
{\int_t^1\rho^{(l)}(x)dx}&=-t\rho^{(l)}(t)+\frac{1}{\Gamma(\alpha)\Gamma(2-\alpha)}\int_t^1x^{-\alpha}(1-x)^{\alpha-1+l}dx\\
&=-\frac{1}{\alpha\Gamma(\alpha)\Gamma(2-\alpha)}t^{1-\alpha}+\frac{1}{(\alpha-1)\Gamma(\alpha)\Gamma(2-\alpha)}t^{1-\alpha}\\
&\quad+\frac{\alpha+l-1}{(1-\alpha)\Gamma(\alpha)\Gamma(2-\alpha)}\int_{t}^1x^{1-\alpha}(1-x)^{\alpha+l-2}dx+O(t^{2-\alpha})\\
&=\frac{C_0}{\alpha-1}t^{1-\alpha}+\frac{\Gamma(\alpha+l)}{\Gamma(l+1)\Gamma(\alpha)(1-\alpha)}+O(t^{2-\alpha}).\\
\end{align*}

The choices $l=0,1,2,$ give  $(1).$

\item 
Recall that 
$$\displaystyle A=\frac{\int_0^1((1-x)^{1-\alpha}-1-(\alpha-1)x)\nu^{(1)}(dx)}{C_0\Gamma(2-\alpha)}=\frac{\alpha}{\Gamma(2-\alpha)}\int_0^1((1-x)^{1-\alpha}-1-(\alpha-1)x)x^{-1-\alpha}(1-x)^{\alpha}dx.$$

Using integration by parts two times, 
\begin{align*}
A&=\frac{\alpha}{\Gamma(2-\alpha)}\frac{1}{\alpha(\alpha-1)}\int_0^1x^{1-\alpha}\left(-\alpha(\alpha-1)(1-x)^{\alpha-2}+2\alpha (\alpha-1)(1-x)^{\alpha-1}-\alpha(\alpha-1)^2x(1-x)^{\alpha-2}\right)dx\\
&=\frac{1}{\Gamma(2-\alpha)(\alpha-1)}\left(-\Gamma(\alpha+1)\Gamma(2-\alpha)+2(\alpha-1)\Gamma(\alpha+1)\Gamma(2-\alpha)-(\alpha-1)\Gamma(3-\alpha)\Gamma(\alpha+1) \right)\\
&=\alpha(\alpha^2-\alpha-1)\Gamma(\alpha-1).
\end{align*}

$B$ can be computed directly using integration by parts two times. 
\item Notice that $\Delta(\alpha)$ has the expression $(\ref{delta})$. Hence it can be calculated since we know $A,B,C_2^{(1)}$, and  $C_2^{(2)}$. We omit the very detailed calculus.
\end{enumerate}
\end{proof}

\section{Proof of Theorem~\ref{secondtheo}}
Notice that $n^{\alpha-1}T_1^{(n)}\stackrel{(d)}{\rightarrow} T$ and if $\beta\geq \frac{\alpha}{\alpha-1}$, one gets $\mathbb{E}[T^{\beta}]=+\infty$, hence $\mathbb{E}[(n^{\alpha-1}T_{1}^{(n)})^{\beta}]$ converges to $+\infty$ ({see Lemma 4.11 of \cite{MR1876169}}).
{
If $0\leq \beta_1< \beta_2<\frac{\alpha}{\alpha-1}$ and $(\mathbb{E}[(n^{\alpha-1}T_{1}^{(n)})^{\beta_2}],n\geq 2)$ is bounded. Then  $((n^{\alpha-1}T_1^{(n)})^{\beta_1},{n\geq 2})$ is uniformly integrable (see Lemma 4.11 of \cite{MR1876169} and  Problem 14 in section 8.3  \cite{breiman1992probability}). Then we need only to prove that for $\beta\in[2,\frac{\alpha}{\alpha-1})$, $(\mathbb{E}[(n^{\alpha-1}T_{1}^{(n)})^{\beta}],n\geq 2)$ is bounded.

We will prove by induction on $n$ that there exists a constant $C>0$ such that for all $n\geq 2$, $(\mathbb{E}[n^{\alpha-1}T_1^{(n)}])^{\beta}\leq C$. We first assume that, for all  $2\leq k\leq n-1$, $(\mathbb{E}[k^{\alpha-1}T_1^{(k)}])^{\beta}\leq C$ and then will prove that (if $C$ is large enough) $(\mathbb{E}[n^{\alpha-1}T_1^{(n)}])^{\beta}\leq C$. 
}

Writing the decomposition of $T_1^{(n)}$ at the first coalescence, we have
\[
T_{1}^{(n)}=\frac{e_{0}}{g_{n}}+\sum_{k=2}^{n-1}\mathbf{1}_{\{H_{n, k}\}}\bar{T}_{1}^{(k)},
\]
where:
\begin{itemize}
\item $\displaystyle H_{n,k}$ is the event: \{From $n$ individuals, we have  $k$  individuals after the first coalescence, and individual $ 1$ is not involved in this collision\}, $2\leq k\leq n-1$;
\item $e_{0}$ is a unit exponential random variable, $\bar{T}_{1}^{(k)}\stackrel{(d)}{=}T_{1}^{(k)}$, and all these random variables $e_0$, $\bar{T}_{1}^{(k)}$, $\mathbf{1}_{\{H_{n,k}\}}$ are independent. One notices that $\mathbb{P}(H_{n,k})=p_{n,k}\frac{k-1}{n}$, compared to $(\ref{mainequavsuia})$.
\end{itemize}

Thanks to Lemma \ref{inequality} in Appendix C, we have the following inequality.

\begin{align}\label{decom1}
 \mathbb{E}[(T_{1}^{(n)})^{\beta}]&=\mathbb{E}[((\frac{e_{0}}{g_{n}}+\sum_{k=2}^{n-1}\mathbf{1}_{\{H_{n,k}\}}\bar{T}_{1}^{(k)}))^{\beta}]\leq I_{n,1}+I_{n,2}+I_{n,3}+I_{n,4}
\end{align}

where
\begin{align*}
I_{n,1}&=\mathbb{E}[(\frac{e_{0}}{g_{n}})^{\beta}],\quad I_{n,2}=\mathbb{E}[(\sum_{k=2}^{n-1}\mathbf{1}_{\{H_{n,k}\}}\bar{T}_{1}^{(k)})^{\beta}],\\
I_{n,3}&=\mathbb{E}[\beta2^{\beta-1}\frac{e_{0}}{g_{n}}(\sum_{k=2}^{n-1}\mathbf{1}_{\{H_{n,k}\}}\bar{T}_{1}^{(k)})^{\beta-1}] \mbox{ and } I_{n,4}=\mathbb{E}[\beta2^{\beta-1}(\frac{e_{0}}{g_{n}})^{\beta-1}\sum_{k=2}^{n-1}\mathbf{1}_{\{H_{n,k}\}}\bar{T}_{1}^{(k)}].
\end{align*}

We first bound $I_{n,1}$. Recall that $\displaystyle g_n\sim C_0\Gamma(2-\alpha)n^{\alpha}$, hence there exists a constant $K_1>0$ such that for ${\beta}\geq 2>1$ and $n\geq 2$, 

\begin{eqnarray}
 \label{I1}
 n^{(\alpha-1)\beta}I_{n,1}\leq \frac{K_1}{n}.
\end{eqnarray}

We now consider $I_{n,2}$.
Notice that $(\alpha-1)\beta<\omega^{(1)}$. Hence, using Proposition \ref{h1}, we have

\begin{align}\label{I2}
n^{(\alpha-1)\beta}I_{n,2}
&=
n^{-(\alpha-1)\beta}\sum_{k=2}^{n-1}p_{n,k}\frac{k-1}{n}(\frac{n}{k})^{(\alpha-1)\beta}\mathbb{E}[(k^{\alpha-1}T_1^{(k)})^{\beta}]\\
&\leq 
C\sum_{k=2}^{n-1}p_{n,k}\frac{k-1}{n}(\frac{n}{k})^{(\alpha-1)\beta}\\
&=C(1-\frac{\alpha-(\alpha-1)\beta}{n(\alpha-1)}+o(n^{-1}))
&\leq C(1-\frac{\alpha-(\alpha-1)\beta}{2n(\alpha-1)}),
\end{align}
for $n\geq N,$ where $N$ is a fixed positive integer.

We now proceed to $I_{n,3}$. Notice that for $2\leq k\leq n-1,$ 
\[
\mathbb{E}[(k^{\alpha-1}T_{1}^{(k)})^{\beta-1}]\leq(\mathbb{E}[(k^{\alpha-1}T_{1}^{(k)})^{\beta}])^{\frac{\beta-1}{\beta}}\leq C^{\frac{\beta-1}{\beta}}.
\]
Hence we have
{\begin{align}\label{I3}
n^{(\alpha-1)\beta}I_{n,3}
&=n^{(\alpha-1)\beta}\mathbb{E}[\beta2^{\beta-1}\frac{e_{0}}{g_{n}}\sum_{k=2}^{n-1}\mathbf{1}_{\{H_{n,k}\}}(\bar{T}_{1}^{(k)})^{\beta-1}]\nonumber\\
&
\leq
C^{\frac{\beta-1}{\beta}}\beta2^{\beta-1}n^{\alpha-1}g_{n}^{-1}\sum_{k=2}^{n-1}p_{n,k}\frac{k-1}{n}\left(\frac{n}{k}\right)^{(\alpha-1)(\beta-1)}\nonumber\\
&
=
C^{\frac{\beta-1}{\beta}}n^{\alpha-1}\beta2^{\beta-1}g_{n}^{-1}(1-\frac{\alpha-(\alpha-1)(\beta-1)}{n(\alpha-1)}+o(n^{-1}))\nonumber\\
&\leq \frac{C^{\frac{\beta-1}{\beta}}K_2}{n},
\end{align}}
where $K_2$ is a positive constant. In the second equality, we have used Proposition \ref{h1}.

While for any $n\geq 2,$
\begin{align}\label{I4}
n^{(\alpha-1)\beta}I_{n,4}
&=n^{(\alpha-1)\beta}\mathbb{E}[\beta2^{\beta-1}(\frac{e_{0}}{g_{n}})^{\beta-1}\sum_{k=2}^{n-1}\mathbf{1}_{\{H_{n,k}\}}\bar{T}_{1}^{(k)}]\nonumber\\
&
\leq \beta2^{\beta-1} {\mathbb E}[e_{0}^{\beta-1}] (g_n)^{1-{\beta}}n^{(\alpha-1)(\beta-1)}{\mathbb E}[n^{\alpha-1}{T}_{1}^{(n)}]\nonumber\\
&\leq \frac{K_3}{n^{\beta-1}}\leq \frac{K_3}{n},
\end{align}
where $K_3$ is a positive constant. We have used Lemma \ref{1t} to bound $\mathbb{E}[n^{\alpha-1}T_1^{(n)}]$.

Using (\ref{decom1}),(\ref{I1}),(\ref{I2}),(\ref{I3}),(\ref{I4}), we have proved that for any $n, n\geq N$, if there exists $C>0$ such that for {all  $ 2\leq k \leq n-1$,  $\mathbb{E}[\left(k^{\alpha-1}T_{1}^{(k)}\right)^{\beta}]\leq C$}, then
\begin{equation}\label{cc}
\mathbb{E}[(n^{\alpha-1}T_{1}^{(n)})^{\beta}]\leq \frac{C+ \left(K_1-C\frac{\alpha-(\alpha-1)\beta}{2(\alpha-1)}+C^{\frac{\beta-1}{\beta}}K_2+K_3\right)}{n}.\end{equation}

Let $C$ large enough such that \begin{equation}\label{ccc} K_1-C\frac{\alpha-(\alpha-1)\beta}{2(\alpha-1)}+C^{\frac{\beta-1}{\beta}}K_2+K_3<0,\end{equation}
Then $\mathbb{E}[(n^{\alpha-1}T_{1}^{(n)})^{\beta}]\leq C$, which allows to conclude.\vspace{5mm}

\section{Appendix}
A) The main recurrence tool
\begin{lem}\label{toollem}
We consider the recurrence $a_n=b_n+\sum_{k=1}^{n-1}q_{n,k}a_k$. We assume that  $b_n=o(n^{-1})$ and that there exist  ${\varepsilon}>0$ and $C>0$ such that $1-\sum_{k=1}^{n-1}q_{n,k}(\frac{n}{k})^{{\varepsilon}}\geq Cn^{-1}$ for $n$ large enough. Then
$\displaystyle \lim_{n\rightarrow +\infty}a_n=0$.\end{lem}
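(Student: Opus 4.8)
\emph{Proof plan.} The plan is to avoid proving any a priori bound on $a_n$ and instead to construct, for each prescribed level $\delta>0$, an explicit super-solution of the recurrence that dominates $(|a_n|)_n$ and whose $\limsup$ equals $\delta$; letting $\delta\downarrow 0$ then gives the claim. The point is that the hypothesis encodes two facts at once: the ``defect mass'' $1-\sum_{k=1}^{n-1}q_{n,k}$ is at least of order $1/n$ (a genuine contraction at each step), and --- through the weight $(n/k)^{\varepsilon}$ --- the mass that $q_{n,\cdot}$ places on small indices is small in a way that, together with $b_n=o(1/n)$, suffices to push $a_n$ to $0$. As preliminaries (and assuming $q_{n,k}\ge 0$, as is the case in all our applications), I would set $S_n:=\sum_{k=1}^{n-1}q_{n,k}(n/k)^{\varepsilon}$; for $n$ large one has $S_n\le 1-Cn^{-1}$, and since $(n/k)^{\varepsilon}\ge 1$ for $k\le n$ also $\sum_{k=1}^{n-1}q_{n,k}\le S_n$ and $\sum_{k=1}^{n-1}q_{n,k}k^{-\varepsilon}=n^{-\varepsilon}S_n$; finally $n|b_n|\to 0$.

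Fix $\delta>0$; I would try the super-solution $\phi_n:=\delta+Dn^{-\varepsilon}$, with $D\ge 0$ to be chosen later. The key estimate is that, for $n$ large,
\begin{align*}
|b_n|+\sum_{k=1}^{n-1}q_{n,k}\phi_k
&=|b_n|+\delta\sum_{k=1}^{n-1}q_{n,k}+Dn^{-\varepsilon}S_n
\ \le\ |b_n|+S_n\phi_n\\
&\le\ |b_n|+(1-Cn^{-1})\phi_n
\ \le\ \phi_n+|b_n|-C\delta n^{-1},
\end{align*}
the last inequality using $\phi_n\ge\delta$. Since $n|b_n|\to 0$, there is $N_0=N_0(\delta)$ such that $|b_n|\le C\delta n^{-1}$ and the defect inequality $S_n\le 1-Cn^{-1}$ both hold for all $n\ge N_0$; hence $|b_n|+\sum_{k=1}^{n-1}q_{n,k}\phi_k\le\phi_n$ for every $n\ge N_0$ and every $D\ge 0$.

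I would then choose $D$ large enough that $|a_n|\le\phi_n$ for the finitely many indices $1\le n<N_0$ (possible since $\phi_n\ge DN_0^{-\varepsilon}$ on this range). A strong induction gives $|a_n|\le\phi_n$ for all $n$: the initial cases hold by the choice of $D$, and for $n\ge N_0$,
$$|a_n|\ \le\ |b_n|+\sum_{k=1}^{n-1}q_{n,k}|a_k|\ \le\ |b_n|+\sum_{k=1}^{n-1}q_{n,k}\phi_k\ \le\ \phi_n.$$
Consequently $\limsup_{n\to\infty}|a_n|\le\delta$, and since $\delta>0$ is arbitrary, $\lim_{n\to\infty}a_n=0$.

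The one genuinely delicate point --- the main obstacle --- is the form of the super-solution. A constant $\phi_n\equiv\delta$ does \emph{not} work: the contribution to $\sum_{k=1}^{n-1}q_{n,k}\phi_k$ coming from small $k$ is controlled only after the weight $(n/k)^{\varepsilon}$ has been inserted, which is exactly why the correction term $Dn^{-\varepsilon}$ must be added, whereas it is the constant part $\delta$ (through the $Cn^{-1}$ defect) that absorbs the inhomogeneity $b_n=o(1/n)$. Everything else is bookkeeping. If one wished to allow signed coefficients $q_{n,k}$, the same argument would go through verbatim with $|q_{n,k}|$ in place of $q_{n,k}$ in the hypothesis and throughout.
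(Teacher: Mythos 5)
Your proof is correct, but it takes a genuinely different route from the paper's. The paper weights the sequence: it builds an increasing sequence $c_n\to+\infty$ with $c_{n+1}\le c_n\bigl(\tfrac{n+1}{n}\bigr)^{\varepsilon}$ (so that $c_n/c_k\le (n/k)^{\varepsilon}$ and the weighted kernel $q_{n,k}c_n/c_k$ still has defect at least $Cn^{-1}$) and with $c_nb_n=o(n^{-1})$, then shows by induction that $c_na_n$ stays bounded and divides by $c_n$. You instead fix a target level $\delta>0$ and exhibit the explicit super-solution $\phi_n=\delta+Dn^{-\varepsilon}$ dominating $|a_n|$, the $Dn^{-\varepsilon}$ term absorbing the weight that $q_{n,\cdot}$ places on small indices and the constant part absorbing $b_n$ through the $Cn^{-1}$ defect; letting $\delta\downarrow 0$ gives the limit. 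Your computation $\sum_k q_{n,k}\phi_k\le S_n\phi_n\le(1-Cn^{-1})\phi_n$ is right, and the argument is arguably more transparent than the paper's, avoiding the auxiliary construction of $(\bar c_n)$ and $(c_n)$. What the paper's version buys in exchange is an implicit rate, $a_n=O(1/c_n)$ with $c_n$ roughly $\min\{n^{\varepsilon},1/(n\sup_{m\ge n}|b_m|)\}$, though the paper never uses it; your version as written yields only the limit. Two minor points in your favour: both arguments need $q_{n,k}\ge 0$ (the triangle inequality in the induction step), which the lemma statement omits --- you flag this explicitly while the paper does not; and you control $|a_n|$ throughout, whereas the paper's induction bounds $c_na_n$ only from above and leaves the symmetric lower bound to the reader.
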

\begin{proof}
{Let $(\bar{c}_{n})_{n\geq 1}$ be an increasing sequence such that}
$$\lim_{n\rightarrow +\infty}\bar{c}_n=+\infty; \lim_{n\rightarrow +\infty}nb_n\bar{c}_n=0.$$

Define another sequence $(c_n)_{n\geq 1}$ by: $c_{1}=\bar{c}_{1}$. For $n\geq 1$,
$$ c_{n+1}=\min\{c_n(\frac{n+1}{n})^{{\varepsilon}}, \bar{c}_{n+1}\},$$ 

Then we have $\displaystyle \lim_{n\rightarrow +\infty}c_n=+\infty, c_nb_n=o(n^{-1})$ and for any $1\leq k\leq n-1$, $\frac{c_n}{c_k}\leq (\frac{n}{k})^{\varepsilon}$. In consequence, $1-\sum_{k=1}^{n-1}q_{n,k}\frac{c_n}{c_k}\geq Cn^{-1}$ for $n$ large enough. We suppose that there exist $n_1>0$ such that for $n>n_1$, we have $1-\sum_{k=1}^{n-1}q_{n,k}\frac{c_n}{c_k}>\frac{C}{n}$ and $c_nb_n<\frac{C}{2n}$. We can find a number $C'$ such that $C'>\max\{1, c_ka_k ; 1\leq k\leq n_1\}.$ 
We transform the original recurrence to 
$$c_na_n=c_nb_n+\sum_{k=1}^{n-1}\left(q_{n,k}\frac{c_n}{c_k}\right)c_ka_k.$$
Then $c_{n_1+1}a_{n_1+1}\leq \frac{C}{2(n_1+1)}+(1-\frac{C}{n_1+1})C'\leq C'.$ By induction, we prove that the sequence $(c_na_n)_{n\geq 1}$ is bounded by $C'$. Since $c_n$ tends to the infinity, we get $\displaystyle \lim_{n\rightarrow +\infty}a_n=0$.
\end{proof}
\begin{rem}
This kind of  recurrence relationships is very frequent in probability. We refer to \cite{marynych2011stochastic} for a rather detailed survey.
\end{rem}
B) Asymptotic behaviours of $X_1^{(n)}$
\begin{lem}\label{x1}
We assume that $\rho(t)$ satisfies $(\ref{mainassum})$.
\begin{enumerate}
\item If $0<\zeta<\alpha-1,$ then $\mathbb{E}[X_1^{(n)}]=\frac{1}{\alpha-1}(1+\frac{C_1\Gamma(2-\alpha+\zeta)\zeta}{C_0\Gamma(2-\alpha)(\alpha-1-\zeta)}n^{-\zeta})+o(n^{-\zeta}).$
\item If $\zeta=\alpha-1,$ then $\mathbb{E}[X_1^{(n)}]=\frac{1}{\alpha-1}(1+\frac{C_1(\alpha-1)}{C_0\Gamma(2-\alpha)}n^{1-\alpha}\ln n)+o(n^{1-\alpha}\ln n).$

\item If  $\zeta>\alpha-1$, then $\mathbb{E}[X_1^{(n)}]=\frac{1}{\alpha-1}(1+\frac{C_2(\alpha-1)}{C_0\Gamma(2-\alpha)}n^{1-\alpha})+o(n^{1-\alpha}),$ where $\displaystyle C_2=\lim_{t\rightarrow 0}\int_t^1\rho(r)dr-\frac{C_0t^{1-\alpha}}{\alpha-1}.$
\end{enumerate}
\end{lem}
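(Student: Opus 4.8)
The plan is to compute $\mathbb{E}[X_1^{(n)}]$ directly from its integral representation and then extract the asymptotics from the singularity of $\rho$ at $0$. Recall from the jump chain transitions $(\ref{pnk})$ that
$$\mathbb{E}[X_1^{(n)}]=\sum_{l=1}^{n-1}l\,p_{n,n-l}=\frac{1}{g_n}\sum_{l=1}^{n-1}l{n\choose l+1}\lambda_{n,l+1}
=\frac{1}{g_n}\int_0^1\left(\sum_{l=1}^{n-1}l{n\choose l+1}x^{l+1}(1-x)^{n-l-1}\right)\nu(dx).$$
The inner sum simplifies: using $\sum_{l}l{n\choose l+1}x^{l+1}(1-x)^{n-l-1}=nx-nx(1-x)^{n-1}-(1-(1-x)^n-nx(1-x)^{n-1})$ (i.e. $\mathbb{E}[\text{decrease}]$ equals expected number merged minus one, on the event a merger occurs), one gets a numerator of the form $\int_0^1\big(nx - 1 + (1-x)^n\big)\nu(dx)$ after combining terms, which I would rewrite via the substitution and integration by parts as $\int_0^1\big(n x (1-x)^{n-1}\cdot(\text{something})\big)$ type expressions. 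Concretely, I would aim to reach the clean identity $g_n\,\mathbb{E}[X_1^{(n)}] = n\int_0^1 x(1-x)^{n-1}\nu(dx) - g_n$ or an equivalent form expressing everything through $\rho(t)=\int_t^1\nu(dx)$ after integration by parts, so that both numerator and denominator are Beta-type integrals against $\rho$.

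Next I would plug in the assumption $(\ref{mainassum})$, $\rho(t)=C_0t^{-\alpha}+C_1t^{-\alpha+\zeta}+o(t^{-\alpha+\zeta})$, and evaluate the resulting integrals $n(n-1)\int_0^1 t(1-t)^{n-2}\rho(t)\,dt$ and its analogue in the numerator using $(\ref{f1})$. The leading term $C_0t^{-\alpha}$ contributes $C_0\Gamma(2-\alpha)n^\alpha$ to $g_n$ (as in the Lemma on $1/g_n$) and the matching leading behaviour in the numerator gives the constant $\tfrac{1}{\alpha-1}$ after dividing. For the correction term, the key dichotomy is how the $C_1t^{-\alpha+\zeta}$ piece compares to the $O(1)$ remainder that comes from the non-singular part of $\int_t^1\rho$: when $\zeta<\alpha-1$ the $C_1$ term dominates and produces the $n^{-\zeta}$ correction with coefficient $\tfrac{C_1\Gamma(2-\alpha+\zeta)\zeta}{C_0\Gamma(2-\alpha)(\alpha-1-\zeta)}$; when $\zeta=\alpha-1$ the two scales coincide and a logarithm appears (the integral $\int_t^1 r^{-\alpha+\zeta}\,dr=\int_t^1 r^{-1}\,dr=-\ln t$ is the source); when $\zeta>\alpha-1$ the remainder wins and the correction is governed by $C_2=\lim_{t\to0}\big(\int_t^1\rho(r)\,dr-\tfrac{C_0t^{1-\alpha}}{\alpha-1}\big)$, giving the $n^{1-\alpha}$ term.

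The main obstacle is bookkeeping rather than conceptual: one must carefully track which of the several competing lower-order contributions (the $-\tfrac{1}{2}\alpha(\alpha-1)$-type correction from Stirling in $g_n$, the $C_1$-term, and the $C_2$ remainder) survives at each relevant order in each of the three regimes, and confirm that the error terms genuinely are $o$ of the claimed correction. A clean way to organise this, and the route I would actually take, is to first establish the exact formula $\mathbb{E}[X_1^{(n)}]+1 = \dfrac{n(n-1)\int_0^1 t(1-t)^{n-2}\rho^{(1)*}(t)\,dt}{g_n}$ for an appropriate auxiliary $\rho$-type function coming from integration by parts (so the problem reduces to comparing two instances of the integral asymptotics already handled for $g_n$), then apply $(\ref{f1})$ and $(\ref{gninverse})$-style expansions termwise, and finally split into the three cases on $\zeta$ exactly as in the statement. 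Since $\zeta>\alpha-1$ is the case relevant to Beta$(2-\alpha,\alpha)$, special care goes to verifying the $C_2$ limit exists under $(\ref{mainassum})$ and identifying its coefficient.
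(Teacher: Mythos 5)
Your plan is essentially the paper's proof. The paper starts from the exact identity
$\mathbb{E}[X_1^{(n)}]=\int_0^1(1-t)^{n-2}\bigl(\int_t^1\rho(r)dr\bigr)dt\big/\int_0^1(1-t)^{n-2}t\rho(t)\,dt$
(cited from \cite{DDS2008} rather than re-derived), and then carries out precisely the three-case expansion you describe: the $C_1$ term yields the $n^{-\zeta}$ correction when $\zeta<\alpha-1$, the logarithm in case (2) comes from $\int_0^1(1-t)^{n}\ln t\,dt=-\tfrac{\ln n}{n}+O(n^{-1})$, and $C_2$ governs case (3); your derivation of the ratio formula by two integrations by parts from the jump-chain representation is a legitimate substitute for the citation. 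One concrete slip: the candidate ``clean identity'' $g_n\,\mathbb{E}[X_1^{(n)}]=n\int_0^1x(1-x)^{n-1}\nu(dx)-g_n$ is false --- indeed $\int_0^1x(1-x)^{n-1}\nu(dx)=+\infty$ whenever $\alpha>1$, since $y\rho(y)+\int_y^1\rho\sim \tfrac{\alpha}{\alpha-1}C_0y^{1-\alpha}\to\infty$, whereas your (correct) numerator $\int_0^1\bigl(nx-1+(1-x)^n\bigr)\nu(dx)$ is finite because the integrand is $O(n^2x^2)$ near $0$. The alternative target you then name, expressing both numerator and denominator as $n(n-1)\int_0^1(1-t)^{n-2}(\cdots)\,dt$ integrals against $\rho$, is the correct one and is exactly what the paper uses, so discard the first identity and the rest goes through.
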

\begin{proof}
We have:
$$\mathbb{E}[X_1^{(n)}]=\frac{\int_0^1(1-t)^{n-2}(\int_t^1\rho(r)dr)dt}{\int_0^1(1-t)^{n-2}t\rho(t)dt}$$(see \cite{DDS2008}).
Hence we should give the values of $\int_0^1(1-t)^{n-2}(\int_t^1\rho(r)dr)dt$ and $\int_0^1(1-t)^{n-2}t\rho(t)dt$. For that, we use the estimation $(\ref{ncd}).$
\begin{enumerate}
\item If $0<\zeta<\alpha-1, $
 $\int_t^1\rho(r)dr=\int_t^1C_0r^{-\alpha}+C_1r^{-\alpha+\zeta}+o(r^{-\alpha+\zeta})dr=\frac{C_0}{\alpha-1}t^{1-\alpha}+\frac{C_1}{\alpha-1-\zeta}t^{-\alpha+\zeta+1}+o(t^{-\alpha+\zeta+1}).$
 Then using that $\alpha-\zeta-1>0$, (\ref{ncd}) and (\ref{f1}), we have
 \begin{align*}
 \int_0^1(1-t)^{n-2}(\int_t^1\rho(r)dr)dt&=\frac{C_0}{\alpha-1}\frac{\Gamma(n-1)\Gamma(2-\alpha)}{\Gamma(n+1-\alpha)}+(\frac{C_1}{\alpha-1-\zeta}+o(1))\frac{\Gamma(n-1)\Gamma(2-\alpha+\zeta)}{\Gamma(n+1-\alpha+\zeta)}\\
 &=\frac{C_0\Gamma(2-\alpha)}{\alpha-1}n^{\alpha-2}(1+O(n^{-1}))+(\frac{C_1}{\alpha-1-\zeta}+o(1))\Gamma(2-\alpha+\zeta)n^{\alpha-2-\zeta}(1+O(n^{-1}))\\
 &=\frac{C_0\Gamma(2-\alpha)}{\alpha-1}n^{\alpha-2}+\frac{C_1\Gamma(2-\alpha+\zeta)}{\alpha-1-\zeta}n^{\alpha-2-\zeta}+o(n^{\alpha-2-\zeta}),\\
 \end{align*}
 
 and
 \begin{align*}
 \int_0^1(1-t)^{n-2}t\rho(t)dt&=C_0\frac{\Gamma(n-1)\Gamma(2-\alpha)}{\Gamma(n+1-\alpha)}+(C_1+o(1))\frac{\Gamma(n-1)\Gamma(2-\alpha+\zeta)}{\Gamma(n+1-\alpha+\zeta)}\\
 &=C_0\Gamma(2-\alpha)n^{\alpha-2}(1+O(n^{-1}))+(C_1+o(1))\Gamma(2-\alpha+\zeta)n^{\alpha-2-\zeta}(1+O(n^{-1}))\\
 &=C_0\Gamma(2-\alpha)n^{\alpha-2}+C_1\Gamma(2-\alpha+\zeta)n^{\alpha-2-\zeta}+o(n^{\alpha-2+\zeta}).\\
 \end{align*}
 
 Hence $\mathbb{E}[X_1^{(n)}]=\frac{1}{\alpha-1}(1+\frac{C_1\Gamma(2-\alpha+\zeta)\zeta}{C_0\Gamma(2-\alpha)(\alpha-1-\zeta)}n^{-\zeta}+o(n^{-\zeta})).$

\item If $\zeta=\alpha-1$,  $\int_t^1\rho(r)dr=\frac{C_0}{\alpha-1}t^{1-\alpha}-C_1\ln t+o(\ln t)$. Then 
\begin{align*}
 &\int_0^1(1-t)^{n-2}(\int_t^1\rho(r)dr)dt=\frac{C_0}{\alpha-1}\frac{\Gamma(n-1)\Gamma(2-\alpha)}{\Gamma(n+1-\alpha)}+(-C_1+o(1))\int_0^1(1-t)^{n-2}\ln tdt.\\
  \end{align*}
Let $a_n=\int_0^1(1-t)^{n}\ln t dt$. By integration by parts, $(n+1)a_n=na_{n-1}-\frac{1}{n+1}$. So $a_n=\frac{-\sum_{i=2}^{n+1}1/i}{n+1}=\frac{-\ln n}{n}+O(n^{-1})$.

In consequence, 
\begin{align*}
\int_0^1(1-t)^{n-2}(\int_t^1\rho(r)dr)dt&=\frac{C_0\Gamma(2-\alpha)}{\alpha-1}n^{\alpha-2}(1+O(n^{-1}))+(-C_1+o(1))(\frac{-\ln (n-2)}{n-2}+O(n^{-1}))\\
 &=\frac{C_0\Gamma(2-\alpha)}{\alpha-1}n^{\alpha-2}+C_1\frac{\ln n}{n}+o(\frac{\ln n}{n}).\\
 \end{align*}
Moreover,
 \begin{align*}
 &\int_0^1(1-t)^{n-2}t\rho(t)dt=C_0\Gamma(2-\alpha)n^{\alpha-2}+C_1n^{-1}+o(n^{-1}).\\
 \end{align*}

Hence, $\mathbb{E}[X_1^{(n)}]=\frac{1}{\alpha-1}(1+\frac{C_1(\alpha-1)}{C_0\Gamma(2-\alpha)}n^{1-\alpha}\ln n)+o(n^{1-\alpha}\ln n).$

\item If $\zeta>\alpha-1,$ $\int_t^1\rho(r)dr=\frac{C_0}{\alpha-1}t^{1-\alpha}+C_2+o(1)$, where we recall that $\displaystyle C_2=\lim_{t\rightarrow 0}\int_t^1\rho(r)dr-\frac{C_0t^{1-\alpha}}{\alpha-1}$. Notice that for the term $o(1)$ here,  $\int_0^1(1-t)^{n-2}o(1)dt=o(n^{-1})$. Hence,
\begin{align*}
 \int_0^1(1-t)^{n-2}(\int_t^1\rho(r)dr)dt&=\frac{C_0\Gamma(2-\alpha)}{\alpha-1}n^{\alpha-2}+\frac{C_2}{n}+o(n^{-1}),\\
 \end{align*}
 
and,
\begin{align*}
 &\int_0^1(1-t)^{n-2}t\rho(t)dt=C_0\Gamma(2-\alpha)n^{\alpha-2}+O(n^{\alpha-2+\max\{-1,-\zeta\}}).
  \end{align*}

Hence we get  {$\mathbb{E}[X_1^{(n)}]=\frac{1}{\alpha-1}(1+\frac{C_2(\alpha-1)}{C_0\Gamma(2-\alpha)}n^{1-\alpha}+o(n^{1-\alpha})).$}

\end{enumerate}
\end{proof}

\begin{lem}\label{xk}
We assume that $\rho(t)$ satisfies $(\ref{2c})$. If $k\geq 2$, 
$$\mathbb{E}[(\frac{X_1^{(n)}}{n})^k]=\frac{\int_{0}^1kt^{k-1}\rho(t)dt}{C_0\Gamma(2-\alpha)}n^{-\alpha}+O(n^{-\min\{1+\alpha,k\}}).$$
\end{lem}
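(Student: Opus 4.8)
\emph{Strategy.} I would turn the moment into an integral against $\nu$, pass to falling factorials to reduce the inner sum to a Beta-type integral, and extract the asymptotics by a Laplace argument at the endpoint $v=0$. First, from $(\ref{pnk})$ and $\lambda_{n,j}=\int_0^1 x^j(1-x)^{n-j}\nu(dx)$ one has $\mathbb{P}(X_1^{(n)}=j-1)=\binom nj\lambda_{n,j}/g_n$ for $2\le j\le n$, so that
$$\mathbb{E}[(X_1^{(n)})^k]=\frac1{g_n}\int_0^1 h_k(n,x)\,\nu(dx),\qquad h_k(n,x):=\sum_{j=2}^n(j-1)^k\binom nj x^j(1-x)^{n-j}=\mathbb{E}[(B_{n,x}-1)^k\mathbf{1}_{\{B_{n,x}\ge 2\}}],$$
where $B_{n,x}\sim\mathrm{Bin}(n,x)$. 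On $\{B_{n,x}\ge2\}$ one has $0\le(B_{n,x}-1)/n\le1$, hence for $k\ge2$, $h_k(n,x)\le n^{k-2}\mathbb{E}[(B_{n,x}-1)^2\mathbf{1}_{\{B_{n,x}\ge2\}}]\le n^k x^2$, the last bound from Bernoulli's inequality $(1-x)^n\ge1-nx$. Since $h_k(n,x)/n^k\to x^k$ pointwise and $x^2$ is $\nu$-integrable, dominated convergence, together with $g_n\sim C_0\Gamma(2-\alpha)n^{\alpha}$ (shown just after $(\ref{f1})$) and the Tonelli identity $\int_0^1 x^k\nu(dx)=\int_0^1 k\,t^{k-1}\rho(t)\,dt$, already gives $\mathbb{E}[(X_1^{(n)}/n)^k]=\frac{\int_0^1 kt^{k-1}\rho(t)dt}{C_0\Gamma(2-\alpha)}n^{-\alpha}+o(n^{-\alpha})$.

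To sharpen the error I would compute exactly. Write $(j-1)^k=\sum_{i=1}^k S(k,i)\,(j-1)_i$ with Stirling numbers of the second kind $S(k,i)$ ($S(k,k)=1$), where $(j-1)_i$ is the falling factorial in the notation of $(\ref{notation})$'s neighbourhood. Using $(B)_{i+1}=B\,(B-1)_i$ and $x^{m+1}/(m+1)=\int_0^x u^m\,du$ one obtains, for each $i$,
$$\sum_{j=2}^n(j-1)_i\binom nj x^j(1-x)^{n-j}=(n)_{i+1}\int_0^x u^i(1-x+u)^{n-i-1}\,du;$$
integrating against $\nu$ and swapping the order of integration (Tonelli, with $v=x-u$) turns the right side into $(n)_{i+1}\int_0^1(1-v)^{n-i-1}\psi_i(v)\,dv$, where $\psi_i(v):=\int_v^1(x-v)^i\nu(dx)=i\int_v^1(x-v)^{i-1}\rho(x)\,dx$ by integration by parts (using $\rho(1)=0$), so $\psi_i(0)=\int_0^1 i\,t^{i-1}\rho(t)\,dt<\infty$ for $i\ge2$. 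A Laplace estimate applies: $(1-v)^{n-i-1}$ concentrates at $v=0$ with total mass $1/(n-i)$, and $(\ref{2c})$ forces $\psi_i(v)-\psi_i(0)=O(v)$ for $i\ge3$ and $O(v^{2-\alpha})$ for $i=2$, whence $\int_0^1(1-v)^{n-i-1}\psi_i(v)\,dv=\psi_i(0)/(n-i)+o(n^{-1})$.

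Finally I would reassemble using $(n)_{i+1}/(n-i)=(n)_i\sim n^i$, the expansion of $g_n$, and $\mathbb{E}[X_1^{(n)}]=O(1)$ from Lemma \ref{x1} for the $i=1$ term: the $i=k$ term contributes $\frac{\psi_k(0)}{C_0\Gamma(2-\alpha)}n^{k-\alpha}$, the terms $2\le i\le k-1$ contribute $O(n^{k-1-\alpha})$, and the $i=1$ term is $O(1)$; dividing by $n^k$ gives the stated expansion, with error governed by $n^{-(1+\alpha)}$ (the $i=k-1$ block, present only for $k\ge3$) and $n^{-k}$ (the $i=1$ term, binding when $k=2$), i.e. $O(n^{-\min\{1+\alpha,k\}})$. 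The only genuinely delicate point is this last bookkeeping: $(\ref{2c})$ by itself yields merely $g_n=C_0\Gamma(2-\alpha)n^{\alpha}+o(n^{\alpha})$, which caps the error at $o(n^{-\alpha})$, so obtaining the clean power-rate requires the finer expansion of $g_n$ available under the paper's standing assumption $(\ref{mainassum})$ (e.g. $\zeta=1$ for the Beta$(2-\alpha,\alpha)$-coalescent); in any case only the weaker $o(n^{-\alpha})$ is used downstream.
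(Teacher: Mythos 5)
Your argument is correct, and it reaches the same conclusion by a genuinely different route for the quantitative part. Both proofs start from the same representation $\mathbb{E}[(X_1^{(n)})^k]=\frac{1}{g_n}\int_0^1\mathbb{E}[(B_{n,x}-1)^k\ind_{B_{n,x}\geq 2}]\nu(dx)$ and both finish with the identity $\int_0^1x^k\nu(dx)=\int_0^1kt^{k-1}\rho(t)dt$; the difference is how the binomial moment is handled. The paper computes $\mathbb{E}[B_{n,x}^k-B_{n,x}]$ exactly as a polynomial $(nx)^k+\sum a_{k,l,j}n^lx^j$ (with $j\geq 2$, $l<k$) via the moment generating function (Lemma \ref{bin}), and disposes of the truncation term $\mathbb{E}[(1-B_{n,x})\ind_{B_{n,x}\geq 1}]$ by comparison with $g_n$, which yields the $O(n^{-k})$ piece of the error. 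You instead expand $(j-1)^k$ in falling factorials, convert each factorial-moment sum into an exact Beta-type integral, and run a Laplace expansion of $\int_0^1(1-v)^{n-i-1}\psi_i(v)\,dv$ at $v=0$; your rates $\psi_i(v)-\psi_i(0)=O(v^{2-\alpha})$ for $i=2$ and $O(v)$ for $i\geq 3$ do reproduce the stated error exponents after reassembly. Your first-paragraph domination-plus-DCT argument is a worthwhile bonus: it is more elementary than either exact computation and already delivers the $o(n^{-\alpha})$ form of the lemma, which is all that is used downstream (in (\ref{xlalpha}) and Proposition \ref{inverser}). Finally, your closing caveat is accurate and applies equally to the paper's own proof: under (\ref{2c}) alone one only has $1/g_n=\frac{1}{C_0\Gamma(2-\alpha)}n^{-\alpha}(1+o(1))$, so the main term carries an $o(n^{-\alpha})$ error that the displayed $O(n^{-\min\{1+\alpha,k\}})$ does not capture; the clean power rate needs the finer expansion of $g_n$ available under (\ref{mainassum}). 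This is a (harmless) imprecision in the lemma's statement, not a gap in your argument.
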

\begin{proof}
Let $B_{n,x}$ denote a binomial random variable with parameter $(n,x), n\geq 2, 0\leq x\leq 1$. Recall that for $2\leq i\leq n$,  $\mathbb{P}(X_1^{(n)}=i-1)=\int_0^1{n\choose i}x^{i}(1-x)^{n-i}\nu(dx)/g_n=\int_0^1\mathbb{P}(B_{n,x}=i)\nu(dx)/g_n$.

\begin{flalign*}
\mathbb{E}[(\frac{X_1^{(n)}}{n})^k]&=\int_0^1\mathbb{E}[(\frac{B_{n,x}-1}{n})^k\mathbf{1}_{B_{n,x}\geq 1}])\nu(dx)/g_n\\
&=\int_0^1n^{-k}\mathbb{E}[(B_{n,x}^k-B_{n,x})\\
&\qquad  \qquad +\sum_{i=1}^{k-1}{k\choose i}(-1)^i(B_{n,x}^{k-i}-B_{n,x})+(-1)^{k}(1-B_{n,x})\mathbf{1}_{B_{n,x}\geq  1})]\nu(dx)/g_n.
\end{flalign*}

Using Lemma \ref{bin} in Appendix C, we get $\mathbb{E}[(B_{n,x}^k-B_{n,x})]=(nx)^k+O(n^{k-1})x^2.$ Then

\begin{align*}
\mathbb{E}[(\frac{X_1^{(n)}}{n})^k]&=\int_0^1n^{-k}\left( (nx)^k+ O(n^{k-1})x^2\right)\nu(dx)/g_n+n^{-k}\int_0^1(-1)^{k}(1-nx-(1-x)^n)\nu(dx)/g_n\\
&=\frac{\int_{0}^1x^{k}\nu(dx)n^{-\alpha}}{C_0\Gamma(2-\alpha)}+O(n^{-\min\{1+\alpha,k\}})=\frac{\int_{0}^1kt^{k-1}\rho(t)dtn^{-\alpha}}{C_0\Gamma(2-\alpha)}+O(n^{-\min\{1+\alpha,k\}}).
\end{align*}

In the second equality, we have used $g_n\sim C_0\Gamma(2-\alpha)n^{\alpha}$ and also the fact that $\int_0^1(1-nx-(1-x)^n)\nu(dx)\leq g_n=\int_0^1(1-nx(1-x)^{n-1}-(1-x)^n)\nu(dx).$
This achieves the proof.

\end{proof}

\begin{prop}\label{inverser}
We assume that $\rho(t)$ satisfies $(\ref{mainassum})$ and $r\in[0,\omega)$. 
\begin{enumerate}
\item If $0<\zeta<\alpha-1$, then $\mathbb{E}[\left(\frac{n}{n-X_1^{(n)}}\right)^r]=1+\frac{r}{n(\alpha-1)}+\frac{rC_1\Gamma(2-\alpha+\zeta)\zeta}{C_0(\alpha-1)\Gamma(2-\alpha)(\alpha-1-\zeta)}n^{-\zeta-1}+o(n^{-\zeta-1}).
$
\item If $\zeta=\alpha-1$, then $\mathbb{E}[\left(\frac{n}{n-X_1^{(n)}}\right)^r]=1+\frac{r}{n(\alpha-1)}+\frac{rC_1}{C_0\Gamma(2-\alpha)}n^{-\alpha}\ln n+o(n^{-\alpha}\ln n).$
\item If $\zeta>\alpha-1$, then $\mathbb{E}[\left(\frac{n}{n-X_1^{(n)}}\right)^r]=1+\frac{r}{n(\alpha-1)}+(\frac{\int_0^1((1-x)^{-r}-1-rx) \nu(dx)}{C_0\Gamma(2-\alpha)}+\frac{rC_2}{C_0\Gamma(2-\alpha)})n^{-\alpha}+o(n^{-\alpha}),$
where $C_2$ is the same as in Theorem \ref{1t}.
\end{enumerate}
\end{prop}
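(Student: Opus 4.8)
The plan is to expand $\left(\frac{n}{n-X_1^{(n)}}\right)^r=\left(1-\frac{X_1^{(n)}}{n}\right)^{-r}$ in powers of $X_1^{(n)}/n$ and to evaluate the resulting series term by term using Lemmas \ref{x1} and \ref{xk}. Since $X_1^{(n)}\le n-1$ we always have $0\le X_1^{(n)}/n<1$; moreover, as $r\ge 0$, all the coefficients in the binomial series $(1-u)^{-r}=1+ru+\sum_{k\ge 2}\binom{r+k-1}{k}u^k$ for $0\le u<1$ are nonnegative, so Tonelli's theorem gives
\begin{equation}\label{eq:inverser-exp}
\mathbb{E}\left[\left(\frac{n}{n-X_1^{(n)}}\right)^{r}\right]=1+r\,\mathbb{E}\left[\frac{X_1^{(n)}}{n}\right]+\sum_{k\ge 2}\binom{r+k-1}{k}\mathbb{E}\left[\left(\frac{X_1^{(n)}}{n}\right)^{k}\right],
\end{equation}
with both sides a priori in $[1,+\infty]$.

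First I would handle the $k=1$ term: dividing the three estimates of Lemma \ref{x1} by $n$ yields $r\,\mathbb{E}[X_1^{(n)}/n]=\frac{r}{n(\alpha-1)}$ plus a second-order term, which is of order $n^{-\zeta-1}$ when $0<\zeta<\alpha-1$, of order $n^{-\alpha}\ln n$ when $\zeta=\alpha-1$, and equal to $\frac{rC_2}{C_0\Gamma(2-\alpha)}n^{-\alpha}+o(n^{-\alpha})$ when $\zeta>\alpha-1$. For the tail in \reff{eq:inverser-exp} I would insert the leading term $\mathbb{E}[(X_1^{(n)}/n)^k]=\frac{\int_0^1 kt^{k-1}\rho(t)\,dt}{C_0\Gamma(2-\alpha)}n^{-\alpha}+O(n^{-\min\{1+\alpha,k\}})$ of Lemma \ref{xk}, sum against $\binom{r+k-1}{k}$ with the generating-function identity $\sum_{k\ge 1}\binom{r+k-1}{k}kt^{k-1}=r(1-t)^{-r-1}$ (hence $\sum_{k\ge 2}\binom{r+k-1}{k}kt^{k-1}=r(1-t)^{-r-1}-r$), and then apply Fubini's theorem in the form $\int_0^1 h(t)\rho(t)\,dt=\int_0^1\big(\int_0^x h(t)\,dt\big)\nu(dx)$ (legitimate since $\rho(t)=\int_t^1\nu(dx)$), which turns $\int_0^1\big(r(1-t)^{-r-1}-r\big)\rho(t)\,dt$ into $\int_0^1\big((1-x)^{-r}-1-rx\big)\nu(dx)$. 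This produces the $n^{-\alpha}$ term $\frac{\int_0^1((1-x)^{-r}-1-rx)\nu(dx)}{C_0\Gamma(2-\alpha)}n^{-\alpha}$, the integral being finite exactly because $r<\omega$. Combining with the $k=1$ term and comparing exponents — $n^{-\zeta-1}$ dominates $n^{-\alpha}$ when $\zeta<\alpha-1$; $n^{-\alpha}\ln n$ dominates $n^{-\alpha}$ when $\zeta=\alpha-1$; both contributions are of order $n^{-\alpha}$ and add when $\zeta>\alpha-1$ — gives the three expansions in the statement.

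The step I expect to be the main obstacle is making the two limiting operations in the tail sum legitimate uniformly in $k$. Lemma \ref{xk} only gives an error $O(n^{-\min\{1+\alpha,k\}})$ with a $k$-dependent implied constant, so I would bound $\mathbb{E}[(X_1^{(n)}/n)^k]$ from above by an explicit expression of the form $\frac{1}{g_n}\int_0^1\big(x^k+c_k\,n^{-1}x^2\big)\nu(dx)$ (essentially what the proof of Lemma \ref{xk} yields) and check that, multiplied by $\binom{r+k-1}{k}$ and summed over $k$, the total error is still $o(n^{-\alpha})$; this works precisely because $\sum_{k\ge 2}\binom{r+k-1}{k}\int_0^1 x^k\nu(dx)=\int_0^1\big((1-x)^{-r}-1-rx\big)\nu(dx)<+\infty$ when $r<\omega$. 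The same hypothesis makes the left-hand side of \reff{eq:inverser-exp} finite: splitting over $\{X_1^{(n)}\le n/2\}$, where the series converges geometrically, and over $\{X_1^{(n)}>n/2\}$, where one uses that conditionally on the first merger involving $i$ blocks the quantity $n-X_1^{(n)}=n-i+1$ is bounded below by a shifted $\mathrm{Binomial}(n,1-x)$ variable, one obtains $\mathbb{E}[(n/(n-X_1^{(n)}))^r]\le \frac{C_r}{g_n}\int_0^1(1-x)^{-r}\nu(dx)+O(1)<+\infty$. For $r\ge\omega$ this last integral diverges, which is why the restriction $r\in[0,\omega)$ cannot be relaxed (compare Remark \ref{rem2}).
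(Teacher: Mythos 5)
Your overall route coincides with the paper's: expand $\bigl(1-X_1^{(n)}/n\bigr)^{-r}$, feed in Lemmas \ref{x1} and \ref{xk}, and identify the $n^{-\alpha}$ coefficient through the identity $\sum_{k\ge 2}\binom{r+k-1}{k}\int_0^1 kt^{k-1}\rho(t)\,dt=\int_0^1((1-x)^{-r}-1-rx)\nu(dx)$. The difference is that the paper truncates the expansion at a finite order $m$, keeps the integral form of the Taylor remainder, and only sends $m\to+\infty$ \emph{after} $n\to+\infty$ (via the decomposition into $P_1(m,n,s)$ and $P_2(m,n,s)$ and Lemma \ref{equainfini}), whereas you work with the full infinite series and must justify summing the $k$-termwise asymptotics over all $k$. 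That interchange is exactly where your argument has a genuine gap. The error term $O(n^{-\min\{1+\alpha,k\}})$ in Lemma \ref{xk} carries a constant depending on $k$ through the coefficients $a_{k,l,j}$ of Lemma \ref{bin}, and these grow super-exponentially in $k$ (they are Stirling-number-type coefficients from differentiating $(1-x+xe^t)^n$ $k$ times). Consequently the quantity you propose to sum, $\sum_{k\ge 2}\binom{r+k-1}{k}\,c_k\,n^{-1}\int_0^1x^2\nu(dx)/g_n$, diverges for every fixed $n$; the bound is vacuous rather than $o(n^{-\alpha})$. The convergence of $\sum_k\binom{r+k-1}{k}\int_0^1x^k\nu(dx)$ controls only the \emph{leading} terms, not the errors, and for $k$ comparable to $n$ the fixed-$k$ asymptotics of Lemma \ref{xk} are simply not the right tool. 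Some global treatment of the tail of the series (equivalently, of the event that $X_1^{(n)}$ is close to $n$) is unavoidable; this is precisely the role of the paper's $\lim_{m}\limsup_{n}$ argument.

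Your fallback for that global treatment is also not yet a proof. The domination you need near $x=1$ is of the form $\mathbb{E}\bigl[(n/(n-X_1^{(n)}))^{\bar r}\ind_{X_1^{(n)}\ge ns}\bigr]\le K s^{-\alpha}n^{-\alpha}$ for some $\bar r\in(r,\omega)$, uniformly in $n$ and $s$; establishing it is the content of Lemma \ref{equainfini}, whose proof needs the tilted measure $\nu^{(-\bar r)}(dx)=(1-x)^{-\bar r}\nu(dx)$, the summation-by-parts identity \reff{gamma}, and Lemma \ref{bound1}. Your substitute bound $\tfrac{C_r}{g_n}\int_0^1(1-x)^{-r}\nu(dx)+O(1)$ is actually $+\infty$: for $1<\alpha<2$ the measure $\nu(dx)=x^{-2}\Lambda(dx)$ has infinite total mass near $x=0$, so that integral diverges at $0$ regardless of $r$ (the relevant finiteness for $r<\omega$ concerns $\int_y^1(1-x)^{-r}\nu(dx)$ for $y>0$ only). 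The conclusion and the role of the hypothesis $r<\omega$ are correct, but both the summed error estimate and the uniform domination step must be replaced by arguments of the type the paper gives in Lemma \ref{equainfini} and in the $P_1$/$P_2$ splitting.
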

\begin{proof}

By Taylor expansion formula, for $m\geq 2$, we have,

\begin{align*}
\left(\frac{n}{n-X_1^{(n)}}\right)^{r}&=\left(\frac{1}{1-\frac{X_1^{(n)}}{n}}\right)^r\\
&=1+r\frac{X_1^{(n)}}{n}+\sum_{k=2}^{m}\frac{\prod_{i=0}^{k-1}(r+i)}{k!}(\frac{X_1^{(n)}}{n})^k+\frac{\prod_{i=0}^{m}(r+i)}{m!}\int_0^{\frac{X_1^{(n)}}{n}}(1-t)^{-r-m-1}(\frac{X_1^{(n)}}{n}-t)^mdt.
\end{align*}
 Then we get:
\begin{equation}\label{**}
\mathbb{E}[\left(\frac{n}{n-X_1^{(n)}}\right)^{r}]=1+r\mathbb{E}[\frac{X_1^{(n)}}{n}]+\sum_{k=2}^{m}\frac{\prod_{i=0}^{k-1}(r+i)}{k!}\mathbb{E}[(\frac{X_1^{(n)}}{n})^k]+\frac{\prod_{i=0}^{m}(r+i)}{m!}\mathbb{E}[\int_0^{\frac{X_1^{(n)}}{n}}(1-t)^{-r-m-1}(\frac{X_1^{(n)}}{n}-t)^mdt].
\end{equation}

Thanks to Lemma \ref{x1} and  \ref{xk}, we have the asymptotic behaviours of $\mathbb{E}[\frac{X_1^{(n)}}{n}]$ and $\mathbb{E}[\sum_{k=2}^{m}\frac{\prod_{i=0}^{k-1}(r+i)}{k!}(\frac{X_1^{(n)}}{n})^k]$. In particular, for $m\geq 2$, 
$$\displaystyle \lim_{n\rightarrow +\infty}n^{\alpha}\mathbb{E}[\sum_{k=2}^{m}\frac{\prod_{i=0}^{k-1}(r+i)}{k!}(\frac{X_1^{(n)}}{n})^k]=\sum_{k=2}^{m}\frac{\prod_{i=0}^{k-1}(r+i)}{k!}\frac{\int_{0}^1kx^{k-1}\rho(t)dt}{C_0\Gamma(2-\alpha)}=\sum_{k=2}^{m}\frac{\prod_{i=0}^{k-1}(r+i)}{k!}\frac{\int_{0}^1x^{k}\nu(x)}{C_0\Gamma(2-\alpha)}.$$
In consequence,

$$\displaystyle \lim_{m\rightarrow +\infty}\lim_{n\rightarrow +\infty}n^{\alpha}\mathbb{E}[\sum_{k=2}^{m}\frac{\prod_{i=0}^{k-1}(r+i)}{k!}(\frac{X_1^{(n)}}{n})^k]= \lim_{m\rightarrow +\infty}\sum_{k=2}^{m}\frac{\prod_{i=0}^{k-1}(r+i)}{k!}\frac{\int_{0}^1x^{k}\nu(x)}{C_0\Gamma(2-\alpha)}=\frac{\int_0^1((1-x)^{-r}-1-rx) \nu(dx)}{C_0\Gamma(2-\alpha)}.$$
we need only to estimate $\frac{\prod_{i=0}^{m}(r+i)}{m!}\mathbb{E}[\int_0^{\frac{X_1^{(n)}}{n}}(1-t)^{-r-m-1}(\frac{X_1^{(n)}}{n}-t)^mdt],$
 which is the sum of two terms $P_1(m,n,s)$ and $P_2(m,n,s)$ with $0<s<1$ and

$$P_1(m,n,s)=\frac{\prod_{i=0}^{m}(r+i)}{m!}\mathbb{E}[\int_0^{\frac{X_1^{(n)}}{n}}(1-t)^{-r-m-1}(\frac{X_1^{(n)}}{n}-t)^mdt\ind_{X_1^{(n)}\geq  ns}],$$ 

$$P_2(m,n,s)=\frac{\prod_{i=0}^{m}(r+i)}{m!}\mathbb{E}[\int_0^{\frac{X_1^{(n)}}{n}}(1-t)^{-r-m-1}(\frac{X_1^{(n)}}{n}-t)^mdt\ind_{X_1^{(n)}<  ns }].$$

We first focus on $P_1(m,n,s)$. By Lemma \ref{equainfini} in Appendix C, we have
\begin{align}\label{p1}P_1(m,n,s) \leq \mathbb{E}[\left(\frac{n}{n-X_1^{(n)}}\right)^r\ind_{X_{1}^{(n)}\geq  ns }]\leq n^{-\alpha}K_4s^{-\alpha}(1-s)^{\bar{r}-r},\end{align}
where $\bar{r}\in (r,\omega)$ and $K_4$ is a number depending only on $\bar{r}$ and $\nu$(it is important to notice that it does not depend on $s$). 

We now give an upper bound for $P_2(m,n,s)$. We have

\begin{align*}
n^{\alpha}P_2(m,n,s)&=n^{\alpha}\frac{\prod_{i=0}^{m}(r+i)}{m!}\mathbb{E}[\int_0^{\frac{X_1^{(n)}}{n}}(1-t)^{-r-1}(\frac{X_1^{(n)}/n-t}{1-t})^m dt\ind_{X_1^{(n)}<  ns}].
\end{align*}
For $t\in[0,x)$ with $0<x\leq 1,$ we have:
$$\frac{x-t}{1-t}\leq x.$$

So we deduce that 
$$  \int_0^{\frac{X_1^{(n)}}{n}}(\frac{X_1^{(n)}/n-t}{1-t})^m dt\leq(\frac{X_1^{(n)}}{n})^{m+1}.$$

Hence, using Lemma \ref{xk}, for $m>2,$
\begin{align*}
n^{\alpha}P_2(m,n,s)&\leq n^{\alpha}\frac{\prod_{i=0}^{m}(r+i)}{m!}(1-s)^{-r-1}\mathbb{E}[(X_1^{(n)}/n)^{m+1}]\\
&=(1-s)^{-r-1}\frac{\prod_{i=0}^{m}(r+i)}{m!}(\frac{\int_{0}^1(m+1)t^{m}\rho(t)dt}{C_0\Gamma(2-\alpha)}+O(n^{-1})).
\end{align*}

{Using Lemme \ref{bound1} in Appendix C, we have }

\begin{align*}
\int_0^1(m+1)t^m\rho(t)dt
&=\int_0^1x^{m+1}\nu(dx)\\
&=\int_0^1x^{m+1}(1-x)^{\bar{r}}\frac{\nu(dx)}{(1-x)^{\bar{r}}}\\
&=\int_0^1x^{m+1}(1-x)^{\bar{r}}\nu^{(-\bar{r})}(dx)\leq K_5m^{-\bar{r}},
\end{align*}
where $K_5$ is a positive real number depending only on $\bar{r}$ and $\nu$.

Notice that  $\frac{\prod_{i=0}^{m}(r+i)}{m!}=\frac{\Gamma(m+r+1)}{\Gamma(r)\Gamma(m+1)}\sim \frac{m^{r}}{\Gamma(r)}$.
Hence 

\begin{equation}\label{p2}P_2(m,n,s)\leq n^{-\alpha}(1-s)^{-r-1}m^{r}(O(m^{-\bar{r}})+o(n^{-1})).\end{equation}

Combining $(\ref{p1})$ and $(\ref{p2})$, we deduce that 
$$\displaystyle \lim_{m\rightarrow +\infty}\limsup_{n\rightarrow +\infty}n^{\alpha}(P_1(m,n,s)+P_2(m,n,s))=\lim_{m\rightarrow +\infty}\limsup_{n\rightarrow +\infty}n^{\alpha}\frac{\prod_{i=0}^{m}(r+i)}{m!}\mathbb{E}[\int_0^{\frac{X_1^{(n)}}{n}}(1-t)^{-r-m-1}(\frac{X_1^{(n)}}{n}-t)^mdt]=0.$$
This convergence together with Lemma \ref{x1} and \ref{xk} yield this proposition.
\end{proof}

\begin{rem}
\label{rem_ext}
Using the same arguments, it is easy to prove that if $l\in {\mathbb N}$, $\mathbb{E}[(\frac{n}{n-X_1^{(n-l)}})^{r}]$ has also the decomposition given by Proposition~\ref{inverser}. More precisely, write 
\begin{align*}
\left(\frac{n}{n-X_1^{(n-l)}}\right)^{r}&=1+r\frac{X_1^{(n-l)}}{n}+\sum_{k=2}^{+\infty}\frac{\prod_{i=0}^{k-1}(r+i)}{k!}(\frac{X_1^{(n-l)}}{n})^k.
\end{align*}

For any $k\geq 1$, Lemma \ref{x1} and \ref{xk} give $\mathbb{E}[(\frac{X_1^{(n-l)}}{n})^k]=\mathbb{E}[(\frac{X_1^{(n-l)}}{n-l})^k]+O(n^{-2})$. This proposition also shows that 

$$n^{\alpha}\mathbb{E}[\sum_{k=2}^{+\infty}\frac{\prod_{i=0}^{k-1}(r+i)}{k!}(\frac{X_1^{(n-l)}}{n})^k]\leq n^{\alpha}\mathbb{E}[\sum_{k=2}^{+\infty}\frac{\prod_{i=0}^{k-1}(r+i)}{k!}(\frac{X_1^{(n-l)}}{n-l})^k]\stackrel{m\rightarrow +\infty}{\longrightarrow}0.$$

Then we can conclude.
\end{rem}
\begin{rem}\label{rem2}
If we move farther to Remark \ref{rem1}, we will see that  in the case of Beta$(2-\alpha, \alpha)$,  if $r\geq \omega$, we have
$$\lim_{m\rightarrow +\infty}\lim_{n\rightarrow +\infty}n^{\alpha}\frac{\prod_{i=0}^{m}(r+i)}{m!}\mathbb{E}[\int_0^{\frac{X_1^{(n)}}{n}}(1-t)^{-r-m-1}(\frac{X_1^{(n)}}{n}-t)^mdt]\geq C,$$
where $C>0$ is defined in Remark \ref{rem1}. Then in the third case of this Proposition \ref{inverser},  the expression of $\mathbb{E}[(\frac{n}{n-X_1^{(n)}})^r]$ will be changed. So the constraint $r\in [0,\omega)$ is necessary.
\end{rem}

C) Some results necessary to prove those in Appendix B
 { \begin{lem}\label{bin}
Let $B_{n,x}$ be a binomial random variable with parameter $(n,x),  n\geq 2, 0\leq x\leq 1$. Let $k$ be an integer such that $2\leq k\leq n$.
Then
$$nx+n(n-1)\cdots (n-k+1)x^k\leq \mathbb{E}[B_{n,x}^k]\leq (nx)^k+{k\choose 2}n^{k-1}x^2,$$
\end{lem}

\begin{proof}
Write $B_{n,x}=Y_1+\cdots +Y_n$, where  $Y_1,\cdots, Y_n$ are independent Bernoulli random variables. Let $S:=\{\{i_1,\cdots, i_k\}; 1\leq i_1,\cdots, i_k\leq n\}$. Then
\begin{align*}
\mathbb{E}[\sum_{\{i_1,\cdots, i_k\}\in S_1}Y_{i_1}\cdots Y_{i_k}]+\mathbb{E}[\sum_{\{i_1,\cdots, i_k\}\in S_3}Y_{i_1}\cdots Y_{i_k}]&\leq \mathbb{E}[(B_{n,x})^k]\\
&\leq  \mathbb{E}[\sum_{\{i_1,\cdots, i_k\}\in S_2}Y_{i_1}\cdots Y_{i_k}]+\mathbb{E}[\sum_{\{i_1,\cdots, i_k\}\in S_3}Y_{i_1}\cdots Y_{i_k}],
\end{align*}
where 
\begin{enumerate}
\item $S_1:=\{\{i_1,\cdots, i_n\}\in A; i_1=\cdots=i_k\}$. Then $\mathbb{E}[\sum_{\{i_1,\cdots, i_k\}\in S_1}Y_{i_1}\cdots Y_{i_k}]=nx$.
\item  $S_2:=\{\{i_1,\cdots, i_n\}\in A;  \exists 1\leq  p<q\leq k, i_p=i_q \}$. Then $\mathbb{E}[\sum_{\{i_1,\cdots, i_k\}\in S_2}Y_{i_1}\cdots Y_{i_k}]\leq {k\choose 2}n^{k-1}x^2$.
\item $S_3:=\{\{i_1,\cdots, i_n\}\in A; \forall 1\leq p<q\leq k,  i_p\neq i_q\}$. Then $\mathbb{E}[\sum_{\{i_1,\cdots, i_k\}\in S_3}Y_{i_1}\cdots Y_{i_k}]=n(n-1)\cdots(n-k+1)x^k$. 
\end{enumerate}

Then we can conclude.
\end{proof}
}

\begin{lem}\label{bound1}
We assume that $\rho(t)$ satisfies condition $(\ref{2c})$. Then for every $s\geq 0 $, 
$\int_0^1x^n(1-x)^{s}\nu(dx)\leq K_6n^{-s}$, where $K_6$ is a positive constant which depends only on $s$ and $\nu$.
\end{lem}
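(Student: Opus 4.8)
The plan is to transfer the integral from $\nu$ onto $\rho$ by an integration by parts, then use the polynomial control on $\rho$ provided by $(\ref{2c})$ and finish with a Beta-integral estimate via $(\ref{ncd})$. As a preliminary, I would record the crude bound $\rho(x)\le Kx^{-\alpha}$ for all $x\in(0,1]$, where $K=K(\nu)$: by $(\ref{2c})$ the function $x\mapsto x^{\alpha}\rho(x)$ tends to $C_0$ as $x\to0$, hence is bounded near $0$, and on $[\varepsilon,1]$ it is bounded because $\rho$ is non-increasing with $\rho(\varepsilon)<\infty$; so one may take $K=\sup_{x\in(0,1]}x^{\alpha}\rho(x)$, which depends only on $\nu$ (and on $\alpha$, itself determined by $\nu$).

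The case $s=0$ is immediate, since $\int_0^1x^n\nu(dx)\le\int_0^1x^2\nu(dx)<\infty$ for $n\ge2$. For $s>0$, set $\phi(x)=x^n(1-x)^s$, so that $\phi(0)=\phi(1)=0$. Writing $\rho(x)=\nu((x,1])$ and integrating by parts on $(0,1)$, the boundary terms $\phi(x)\rho(x)$ vanish at $0$ (because $x^n\rho(x)\le Kx^{n-\alpha}\to0$, using $n>\alpha$, which holds for $n\ge2$ since $\alpha<2$) and at $1$ (because $\rho$ stays bounded near $1$ while $(1-x)^s\to0$), so
\[\int_0^1x^n(1-x)^s\nu(dx)=\int_0^1\phi'(x)\rho(x)\,dx=n\int_0^1x^{n-1}(1-x)^s\rho(x)\,dx-s\int_0^1x^n(1-x)^{s-1}\rho(x)\,dx.\]
Both integrals on the right are finite (near $x=1$ the factor $(1-x)^{s-1}$ is integrable as $s>0$, and near $x=0$ one has $\rho(x)\le Kx^{-\alpha}$ with $n-\alpha>-1$), and the second one is non-negative, hence can be dropped when seeking an upper bound.

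It then remains to estimate $n\int_0^1x^{n-1}(1-x)^s\rho(x)\,dx$. Bounding $\rho(x)\le Kx^{-\alpha}$ gives
\[\int_0^1x^n(1-x)^s\nu(dx)\le nK\int_0^1x^{n-1-\alpha}(1-x)^s\,dx=nK\,\frac{\Gamma(n-\alpha)\Gamma(s+1)}{\Gamma(n-\alpha+s+1)},\]
and $(\ref{ncd})$ yields $\Gamma(n-\alpha)/\Gamma(n-\alpha+s+1)=n^{-s-1}(1+O(n^{-1}))$, so the right-hand side is $K\Gamma(s+1)n^{-s}(1+O(n^{-1}))$. Combining this large-$n$ bound with the uniform estimate $\int_0^1x^n(1-x)^s\nu(dx)\le\int_0^1x^2\nu(dx)$ valid for all $n\ge2$ (to absorb the finitely many remaining values of $n$), one obtains $\int_0^1x^n(1-x)^s\nu(dx)\le K_6n^{-s}$ with $K_6$ depending only on $s$ and $\nu$. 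I expect the only genuinely delicate point to be the justification of the integration by parts display, namely checking that the boundary terms vanish and that both resulting integrals converge, the borderline case being $0<s<1$, where $(1-x)^{s-1}$ is singular at $x=1$ but still integrable; everything beyond that is a routine Beta-function computation.
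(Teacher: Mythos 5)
Your proposal is correct and follows essentially the same route as the paper: integrate by parts to replace $\nu(dx)$ by $\rho(t)\,dt$, discard the non-positive contribution coming from the $(1-x)^{s-1}$ term (the paper does this by splitting at $t=n/(n+s)$ where $(n-(n+s)t)$ changes sign, you do it by splitting the derivative of $x^n(1-x)^s$ into its two monomial pieces — the same bound either way), then use $\rho(t)\le Kt^{-\alpha}$ and the Beta-integral asymptotics from $(\ref{ncd})$. Your treatment of the boundary terms and of the case $s=0$ is if anything more careful than the paper's.
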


\begin{proof}
It is clear that there exists $K_7>0$ such that $\rho(t)\leq K_7t^{-\alpha}$, for all ${0<t\leq 1}$. Then
{\begin{align*}
\int_0^1x^n(1-x)^{s}\nu(dx)&=\int_0^1\rho(t)(n-(n+s)t)t^{n-1}(1-t)^{s-1}dt\\
&\leq \int_0^1\rho(t)(n-nt)t^{n-1}(1-t)^{s-1}dt\\
&\leq nK_7 \int_0^{1}t^{n-1-\alpha}(1-t)^sdt= nK_7\frac{\Gamma(n-\alpha)\Gamma(s+1)}{\Gamma(n-\alpha+s+1)}\leq K_6 n^{-s},
\end{align*}}
for some $K_6$ which only depends on $K_7$ and $s$. This achieves the proof of the lemma.

\end{proof}

The upper bound of $P_1(m,n,s)$ is given by the next lemma.
\begin{lem}\label{equainfini}
{We assume that $\rho(t)$ satisfies condition $(\ref{2c})$.  Let $r\in[0,\omega)$ and $\bar{r}\in(r,\omega)$. 
Then there exists a constant $K_{11}$ depending only on $\bar{r}$ and $\nu$ such that for all $s\in (0,1)$, 
}\begin{align*}P_1(m,n,s) \leq \mathbb{E}[\left(\frac{n}{n-X_1^{(n)}}\right)^r\ind_{X_{1}^{(n)}\geq  ns }]\leq n^{-\alpha}K_{11}s^{-\alpha}(1-s)^{\bar{r}-r}.\end{align*}
\end{lem}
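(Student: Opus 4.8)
The statement combines two inequalities. The left one is immediate: by the Taylor expansion with integral remainder used in the proof of Proposition~\ref{inverser}, for every $0\le x<1$
$$\frac{\prod_{i=0}^m(r+i)}{m!}\int_0^x(1-t)^{-r-m-1}(x-t)^m\,dt=(1-x)^{-r}-\sum_{k=0}^m\frac{\prod_{i=0}^{k-1}(r+i)}{k!}x^k\le(1-x)^{-r},$$
the last step because each subtracted term is nonnegative when $r\ge0$. Taking $x=X_1^{(n)}/n<1$, multiplying by $\mathbf{1}_{\{X_1^{(n)}\ge ns\}}\ge0$ and taking expectations gives $P_1(m,n,s)\le\mathbb{E}[(n/(n-X_1^{(n)}))^r\mathbf{1}_{\{X_1^{(n)}\ge ns\}}]$.

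For the right inequality I would first assume $ns\ge1$. Writing $\mathbb{P}(X_1^{(n)}=i-1)=\int_0^1\binom ni x^i(1-x)^{n-i}\nu(dx)/g_n$ for $2\le i\le n$ and substituting $j=n-i$ (so that $n-X_1^{(n)}=j+1$, $\binom ni x^i(1-x)^{n-i}=\mathbb{P}(B_{n,1-x}=j)$, and $\{X_1^{(n)}\ge ns\}=\{j\le n(1-s)-1\}$, which forces $j\le n-2$) yields
$$\mathbb{E}\Big[\big(\tfrac{n}{n-X_1^{(n)}}\big)^r\mathbf{1}_{\{X_1^{(n)}\ge ns\}}\Big]=\frac1{g_n}\int_0^1\mathbb{E}\Big[\big(\tfrac{n}{B_{n,1-x}+1}\big)^r\mathbf{1}_{\{B_{n,1-x}\le n(1-s)-1\}}\Big]\nu(dx).$$
On $\{B_{n,1-x}\le n(1-s)-1\}$ one has $(B_{n,1-x}+1)/n\le1-s<1$, hence $(n/(B_{n,1-x}+1))^r\le(1-s)^{\bar{r}-r}(n/(B_{n,1-x}+1))^{\bar{r}}$; together with $g_n\ge c\,n^\alpha$ for all $n\ge2$ (from $g_n\sim C_0\Gamma(2-\alpha)n^\alpha$), the lemma reduces to the uniform bound $\int_0^1\mathbb{E}[(n/(B_{n,1-x}+1))^{\bar{r}}\mathbf{1}_{\{B_{n,1-x}\le n(1-s)-1\}}]\nu(dx)\le K\,s^{-\alpha}$ with $K=K(\bar{r},\nu)$. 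Two auxiliary facts feed into this. First, a moment bound $\mathbb{E}[(n/(B_{n,p}+1))^{\bar{r}}]\le C_{\bar{r}}\,p^{-\bar{r}}$, uniform in $n$ and $p\in(0,1]$: for integer $k$ the identity $\mathbb{E}[((B_{n,p}+1)\cdots(B_{n,p}+k))^{-1}]\le(n^kp^k)^{-1}$ together with $n/(B+1)\le kn/(B+k)$ gives $\mathbb{E}[(n/(B_{n,p}+1))^k]\le k^kp^{-k}$, and Jensen's inequality extends this to non-integer $\bar{r}$. Second, using $\rho(x)\le Cx^{-\alpha}$ on $(0,1]$ one has $\rho(s)\le C's^{-\alpha}$ for all $s\in(0,1]$, while $\bar{r}<\omega$ makes $\int_{1/2}^1(1-x)^{-\bar{r}}\nu(dx)$ finite; splitting $\int_s^1$ at $\tfrac12$ yields $\int_s^1(1-x)^{-\bar{r}}\nu(dx)\le K_1 s^{-\alpha}$ and, since $1-x\le1-s$ on $(s,1)$, also $\rho(s)\le(1-s)^{\bar{r}}\int_s^1(1-x)^{-\bar{r}}\nu(dx)\le K_1 s^{-\alpha}(1-s)^{\bar{r}}$ for all $s\in(0,1)$.

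Next I would split the outer integral at $x=s$. On $\{x\ge s\}$ drop the indicator and use the first fact: $\int_s^1\mathbb{E}[(n/(B_{n,1-x}+1))^{\bar{r}}]\nu(dx)\le C_{\bar{r}}\int_s^1(1-x)^{-\bar{r}}\nu(dx)\le C_{\bar{r}}K_1s^{-\alpha}$. On $\{x<s\}$ the event $\{B_{n,1-x}\le n(1-s)-1\}$ is a lower large deviation; split it at $B_{n,1-x}=n(1-s)/2$. On $\{n(1-s)/2\le B_{n,1-x}\le n(1-s)-1\}$ one has $n/(B_{n,1-x}+1)\le2/(1-s)$, so the contribution is at most $(2/(1-s))^{\bar{r}}\rho(s)\le2^{\bar{r}}K_1s^{-\alpha}$. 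On $\{B_{n,1-x}<n(1-s)/2\}$ one has $(n/(B_{n,1-x}+1))^{\bar{r}}\le n^{\bar{r}}$, while Chernoff's bound gives $\mathbb{P}(B_{n,1-x}<n(1-s)/2)\le e^{-nD^*(s)}$, where $D^*(s)$ is the minimum over $x\in(0,s)$ of the rate $I(q,p)=q\log(q/p)+(1-q)\log((1-q)/(1-p))$ at $q=(1-s)/2$; an elementary estimate gives $D^*(s)=I(\tfrac{1-s}2,1-s)\ge\tfrac{1-\log2}2(1-s)$, so $\sup_n n^{\bar{r}}e^{-nD^*(s)}\le C'_{\bar{r}}D^*(s)^{-\bar{r}}\le C''_{\bar{r}}(1-s)^{-\bar{r}}$, and this last contribution is at most $C''_{\bar{r}}(1-s)^{-\bar{r}}\rho(s)\le C''_{\bar{r}}K_1s^{-\alpha}$. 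Summing the three pieces gives the reduced estimate, hence the lemma.

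The only genuinely delicate point is the region $\{x<s\}$ with $s$ close to $1$: there the large-deviation rate $D^*(s)$ degenerates linearly in $1-s$, so $n^{\bar{r}}e^{-nD^*(s)}$ is only $O((1-s)^{-\bar{r}})$ and must be absorbed by the decay $\rho(s)=O(s^{-\alpha}(1-s)^{\bar{r}})$ — which is precisely where the hypothesis $\bar{r}<\omega$ is used. The rest is bookkeeping, and the two boundary cases are trivial: if $ns<1$ then $\mathbf{1}_{\{X_1^{(n)}\ge ns\}}\equiv1$ and boundedness of $\mathbb{E}[(n/(n-X_1^{(n)}))^r]$ from Proposition~\ref{inverser} gives the bound after enlarging $K_{11}$, while if $s>1-1/n$ the event $\{X_1^{(n)}\ge ns\}$ is empty.
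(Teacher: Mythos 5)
Your reduction to the binomial representation, the change of exponent from $r$ to $\bar{r}$ at the cost of $(1-s)^{\bar{r}-r}$ on the event $\{B_{n,1-x}\le n(1-s)-1\}$, the moment bound $\mathbb{E}[(n/(B_{n,p}+1))^{\bar{r}}]\le C_{\bar{r}}p^{-\bar{r}}$, and the treatment of the region $x\ge s$ are all sound, and the left-hand inequality via the nonnegativity of the Taylor coefficients of $(1-x)^{-r}$ is correct. The gap is in the region $x<s$. There you bound the integrand pointwise (by $(2/(1-s))^{\bar{r}}$ on the middle event, by $n^{\bar{r}}e^{-nD^*(s)}$ on the extreme one) and then multiply by $\rho(s)$. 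But $\rho(s)=\int_s^1\nu(dx)$ is the $\nu$-mass of the \emph{complementary} region $[s,1]$; the set you are actually integrating over is $(0,s)$, and $\nu((0,s))=+\infty$ since $\rho(t)\sim C_0t^{-\alpha}\to+\infty$ as $t\to 0$ (indeed even $\int_0^1 x\,\nu(dx)=+\infty$ here). So the step ``contribution $\le \sup(\text{integrand})\cdot\rho(s)$'' is over the wrong set, and over the right set it is vacuous. To close this piece you must exploit that the event itself becomes unlikely as $x\to 0$: for instance $\mathbb{P}(B_{n,1-x}\le n(1-s)-1)=\mathbb{P}(B_{n,x}\ge ns+1)\le \frac{n(n-1)x^2}{ns(ns+1)}\le x^2/s^2$ (second-moment Markov, using $ns\ge 1$), together with $\int_0^s x^2\nu(dx)\le \int_0^s 2t\rho(t)dt\le C s^{2-\alpha}$, which restores the $s^{-\alpha}$; and on the extreme event, where you pay $n^{\bar{r}}$, you need a deviation bound that decays in $x$ as well as in $n$, not the $x$-uniform rate $e^{-nD^*(s)}$. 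A secondary flaw: your boundary case $ns<1$ invokes Proposition \ref{inverser} for the boundedness of $\mathbb{E}[(n/(n-X_1^{(n)}))^r]$, but that proposition is proved \emph{using} the present lemma, so the appeal is circular as written.

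For comparison, the paper's proof never splits the $x$-integral at all: it represents the tail sum $\sum_{k\ge\lceil ns\rceil}\binom{n}{k+1}x^{k+1}(1-x)^{n-k-1+\bar{r}}(\tfrac{n}{n-k})^{\bar{r}}$ exactly through the integration-by-parts identity (\ref{gamma}), so that the resulting incomplete Beta integral carries the factor $x^{\lceil ns\rceil}$ which automatically kills the singularity of $\nu$ at $0$; the hypothesis $\bar{r}<\omega$ then enters only through the bound $\rho^{(-\bar{r})}(t)\le K_{10}t^{-\alpha}$, which plays the same role as your estimate $\int_s^1(1-x)^{-\bar{r}}\nu(dx)\le K_1 s^{-\alpha}$ on the piece $x\ge s$. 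Your probabilistic route can be repaired along the lines above, but as written the $x<s$ part does not go through.
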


\begin{proof} It is easy to observe that 
$$P_1(m,n,s) \leq \mathbb{E}[\left(\frac{n}{n-X_1^{(n)}}\right)^r\ind_{X_{1}^{(n)}\geq  ns }].$$
{For $x\in {\mathbb R}$, 
we define $\lceil x \rceil=\min\{m \in\mathbb{Z}; m\geq x\}$.} We have
\begin{align*}
\mathbb{E}[\left(\frac{n}{n-X_1^{(n)}}\right)^{{r}}\ind_{X_{1}^{(n)}\geq  ns }]
= \sum_{k= \lceil ns \rceil }^{n-1}\frac{\int_{0}^{1}{n\choose k+1}x^{k+1}(1-x)^{n-k-1}(\frac{n}{n-k})^{{r}}\nu(dx)}{g_{n}}.
\end{align*}
Notice that $ {n\choose k+1}(\frac{n}{n-k})^{r}=\frac{\Gamma(n+1)}{\Gamma(k+2)\Gamma(n-k)}(\frac{n}{n-k})^{r}.$ So by (\ref{ncd}), there exist two positive constants $K_8, K_9$ such that for all $ k\in \left\{1,2,\ldots,n-1\right\}$,  
$$K_8\frac{\Gamma(n+1+r)}{\Gamma(k+2)\Gamma(n-k+r)}\leq {n\choose k+1}(\frac{n}{n-k})^{r}\leq K_9\frac{\Gamma(n+1+r)}{\Gamma(k+2)\Gamma(n-k+r)}.$$
Moreover using integration by parts, for $1\leq l\leq n-1$ and $0\leq x\leq 1,$ we have:
\begin{align}\label{gamma}
&\sum_{k=l}^{n-1}\frac{\Gamma(n+1+r)}{\Gamma(k+2)\Gamma(n-k+r)}x^{k+1}(1-x)^{n-k-1+r}\nonumber\\
&=\frac{\Gamma(n+1+r)}{\Gamma(l+1)\Gamma(n-l+r)}\int_{0}^{x}t^{l}(1-t)^{n-l+r-1}dt+\frac{\Gamma(n+1+r)}{\Gamma(n+1)\Gamma(1+r)}x^{n}(1-x)^{r}\\
&-\frac{\Gamma(n+1+r)}{\Gamma(n)\Gamma(1+r)}\int_0^x t^{n-1}(1-t)^{r}dt.\nonumber
\end{align}
Let $\bar{r}\in(r,\omega)$, $\nu^{(-\bar{r})}(dx)=\frac{\nu(dx)}{(1-x)^{\bar{r}}}$, and $\rho^{(-\bar{r})}(t)=\int_{t}^1\nu^{(-\bar{r})}(r)dr$. It is easy to see that $\rho^{(-\bar{r})}(t)=C_0t^{-\alpha}+o(t^{-\alpha}).$  Then there exists $K_{10}>0$, such that $\rho^{(-\bar{r})}(t)\leq K_{10}t^{-\alpha}$ for all $t\in(0,1].$

\begin{align}
&\mathbb{E}[\left(\frac{n}{n-X_1^{(n)}}\right)^{\bar{r}}\ind_{X_{1}^{(n)}\geq  ns }]\nonumber\\
&
= \sum_{k= \lceil ns \rceil }^{n-1}\frac{\int_{0}^{1}{n\choose k+1}x^{k+1}(1-x)^{n-k-1}(\frac{n}{n-k})^{\bar{r}}\nu(dx)}{g_{n}}\nonumber= \sum_{k=\lceil ns \rceil}^{n-1}\frac{\int_{0}^{1}{n\choose k+1}x^{k+1}(1-x)^{n-k-1+\bar{r}}(\frac{n}{n-k})^{\bar{r}}\nu^{(-\bar{r})}(dx)}{g_{n}}\nonumber\\
&\leq K_9\frac{\int_0^1\frac{\Gamma(n+1+\bar{r})}{\Gamma(\lceil ns \rceil+1)\Gamma(n-\lceil ns \rceil+\bar{r})}\int_0^xt^{\lceil ns \rceil}(1-t)^{n-\lceil ns \rceil+\bar{r}-1}dt\nu^{(-\bar{r})}(dx)}{g_n}+K_9\frac{\int_0^1\frac{\Gamma(n+1+\bar{r})}{\Gamma(n+1)\Gamma(1+\bar{r})}x^n(1-x)^{\bar{r}}\nu^{(-\bar{r})}(dx)}{g_n}\nonumber\\
&\leq K_9\frac{\int_0^1\frac{\Gamma(n+1+\bar{r})}{\Gamma( \lceil ns \rceil+1 )\Gamma(n- \lceil ns \rceil +\bar{r})}\rho^{(-\bar{r})}(x)x^{ \lceil ns \rceil }(1-x)^{n-\lceil ns \rceil+\bar{r}-1}dx}{g_n}+K_9\frac{\int_0^1\frac{\Gamma(n+1+\bar{r})}{\Gamma(n+1)\Gamma(1+\bar{r})}x^n(1-x)^{\bar{r}}\nu^{(-\bar{r})}(dx)}{g_n}\nonumber\\
&\leq K_9K_{10}\frac{\frac{\Gamma(n+1+\bar{r})\Gamma(\lceil ns \rceil+1-\alpha)}{\Gamma(\lceil ns \rceil+1)\Gamma(n+1+\bar{r}-\alpha)}}{g_n}+{K_6}K_9\frac{\frac{\Gamma(n+1+\bar{r})}{\Gamma(n+1)\Gamma(1+\bar{r})}n^{-\bar{r}}}{g_n}\nonumber\\
&\leq K_{11}s^{-\alpha}n^{-\alpha},\nonumber
\end{align}

where for the first inequality, we use (\ref{gamma}) with $l=\lceil ns \rceil$, in the second inequality, we have used an argument of integration by parts and for the third inequality, we bound $\rho^{(-\bar{r})}(x)$ by $K_{10}x^{-\alpha}$ and we also use Lemma \ref{bound1}. For the last inequality, we use (\ref{ncd}).
 {Here $K_{11}$ is a constant which depends only  on $\bar{r}$ and $\nu$.} 
Hence for all $n\geq 2$, 
\begin{align*}n^{\alpha}\mathbb{E}[\left(\frac{n}{n-X_1^{(n)}}\right)^r\ind_{X_{1}^{(n)}\geq  ns }]\leq n^{\alpha}(1-s)^{\bar{r}-r}\mathbb{E}[\left(\frac{n}{n-X_1^{(n)}}\right)^{\bar{r}}\ind_{X_{1}^{(n)}\geq  ns }]\leq K_{11}s^{-\alpha}(1-s)^{\bar{r}-r}.\end{align*}
This achieves the proof of the lemma.
\end{proof}
{\begin{rem}\label{rem1}
If $r\geq \omega,$ this lemma is false. Take Beta$(2-\alpha, \alpha)$ as example. We have $\nu(dx)=\frac{1}{\Gamma(\alpha)\Gamma(2-\alpha)}x^{-1-\alpha}(1-x)^{\alpha-1}dx$ and $\omega=\alpha.$ Then for any fixed $0<s<1$ and $n\geq \frac{1}{1-s}$, we have $ns\geq n-1$ and it follows that 
\begin{align*}
& P_1(m,n,s)\\
&\geq \mathbb{E}[\left(\left(\frac{n}{n-X_1^{(n)}}\right)^{r}-1-r\frac{X_1^{(n)}}{n}-\sum_{k=2}^{m}\frac{\prod_{i=0}^{k-1}(r+i)}{k!}(\frac{X_1^{(n)}}{n})^k\right)\ind_{X_1^{(n)}=n-1}]\\
&=\mathbb{P}(X_1^{(n)}=n-1)\left(n^{r}-1-r\frac{n-1}{n}-\sum_{k=2}^{m}\frac{\prod_{i=0}^{k-1}(r+i)}{k!}(\frac{n-1}{n})^k\right)\\
&=\frac{\int_0^1x^n\nu(dx)}{g_n}\left(n^{r}-1-r\frac{n-1}{n}-\sum_{k=2}^{m}\frac{\prod_{i=0}^{k-1}(r+i)}{k!}(\frac{n-1}{n})^k\right)\\
&\sim Cn^{-2\alpha}\left(n^{r}-1-r\frac{n-1}{n}-\sum_{k=2}^{m}\frac{\prod_{i=0}^{k-1}(r+i)}{k!}(\frac{n-1}{n})^k\right)\\
\end{align*}
where $C$ is a strictly positive number. Hence, if $r\geq \omega=\alpha$, then 
$$\displaystyle \liminf_{n\rightarrow +\infty}n^{\alpha} P_1(m,n,s)\geq C, \forall 0<s<1.$$
This result is not compatible with the lemma with $0\leq r<\omega$. This remark justifies the constraint  $0\leq r<\omega$.
\end{rem}
}

D) Results that are used to prove Theorem \ref{secondtheo}.

\begin{lem}\label{inequality}
Let $a> 0,b> 0$, $\beta>2$. Then 
\[
0< (a+b)^{\beta}\leq a^{\beta}+b^{\beta}+\beta2^{\beta-1}ab^{\beta-1}+\beta2^{\beta-1}ba^{\beta-1}.
\]
\end{lem}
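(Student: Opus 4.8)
The plan is to treat the two assertions separately. The strict positivity $0<(a+b)^{\beta}$ is immediate: $a>0$ and $b>0$ force $a+b>0$, and $t\mapsto t^{\beta}$ is positive on $(0,+\infty)$, so nothing more is needed there.

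For the upper bound, the key observation is that the right-hand side $a^{\beta}+b^{\beta}+\beta2^{\beta-1}ab^{\beta-1}+\beta2^{\beta-1}ba^{\beta-1}$ is symmetric in $a$ and $b$, so it suffices to prove the inequality under the extra hypothesis $a\le b$. First I would write $(a+b)^{\beta}-b^{\beta}=\beta\int_{b}^{a+b}t^{\beta-1}\,dt$; since $\beta>2>1$ the integrand $t^{\beta-1}$ is increasing, so this integral is at most $\beta\,a\,(a+b)^{\beta-1}$. Using $a\le b$, hence $a+b\le 2b$, I then bound $(a+b)^{\beta-1}\le(2b)^{\beta-1}=2^{\beta-1}b^{\beta-1}$, which yields $(a+b)^{\beta}\le b^{\beta}+\beta2^{\beta-1}ab^{\beta-1}$. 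Finally I add back the nonnegative quantities $a^{\beta}$ and $\beta2^{\beta-1}ba^{\beta-1}$ on the right-hand side to recover precisely the claimed bound.

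The argument is entirely elementary, so there is no genuine obstacle; the only point needing a little care is the reduction to $a\le b$, namely checking that the target expression is invariant under swapping $a$ and $b$ (which it visibly is). An equivalent and perhaps tidier presentation dehomogenises by setting $x=a/b\in(0,1]$ when $a\le b$, proves $(1+x)^{\beta}\le 1+\beta2^{\beta-1}x$ from $(1+x)^{\beta}-1=\beta\int_{0}^{x}(1+t)^{\beta-1}\,dt\le\beta x\,2^{\beta-1}$, multiplies through by $b^{\beta}$, and discards the remaining nonnegative terms.
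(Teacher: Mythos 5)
Your proof is correct, and it is essentially the paper's argument: both rest on the elementary bound $(1+x)^{\beta}\leq 1+\beta 2^{\beta-1}x$ for $x\in(0,1]$, obtained from the monotonicity of the derivative of $t\mapsto t^{\beta}$, followed by homogenisation and discarding the remaining nonnegative terms. The only cosmetic difference is that you dispose of the case $a>b$ by invoking the symmetry of the right-hand side, whereas the paper handles $m=b/a>1$ by factoring out $m^{\beta}$ and applying the same bound to $1/m$; these are the same reduction in disguise.
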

\begin{proof}
If $0\leq m \leq 1$, then 
 $$(1+m)^{\beta}\leq 1+\beta2^{\beta-1}m\leq 1+m^{\beta}+\beta2^{\beta-1}m+\beta2^{\beta-1}m^{\beta-1}.$$
 We use that the function $m\mapsto (1+m)^{\beta}$ is convex and that  $\beta2^{\beta-1}$ is the derivative of $(1+m)^{\beta}$ at $m=1$.

If $1<m$, then
$$(1+m)^{\beta}=m^{\beta}(1+\frac{1}{m})^{\beta}\leq (m)^{\beta}(1+\beta2^{\beta-1}\frac{1}{m})\leq 1+m^{\beta}+\beta2^{\beta-1}m+\beta2^{\beta-1}m^{\beta-1}.$$
Hence for all $m>0$,
$$(1+m)^{\beta}\leq 1+m^{\beta}+\beta2^{\beta-1}m+\beta2^{\beta-1}m^{\beta-1}.$$
Then for all $a>0,b>0$,
\begin{align*}
(a+b)^{\beta}=a^{\beta}(1+\frac{b}{a})^{\beta}&\leq a^{\beta}(1+(\frac{b}{a})^{\beta}+\beta2^{\beta-1}\frac{b}{a}+\beta2^{\beta-1}(\frac{b}{a})^{\beta-1})\\
&
=a^{\beta}+b^{\beta}+\beta2^{\beta-1}ab^{\beta-1}+\beta2^{\beta-1}ba^{\beta-1}.\\
\end{align*}
This achieves the proof.
\end{proof}

\bibliographystyle{abbrv}
\bibliography{yll}

\end{document}